\numberwithin{equation}{section}
\newtheorem{theorem}{Theorem}[section]
\newtheorem{lemma}{Lemma}[section]
\newtheorem{proposition}{Proposition}[section]
\newcommand{\be}{\begin{equation}}
\newcommand{\ee}{\end{equation}}
\newcommand{\ba}{\begin{array}}
\newcommand{\ea}{\end{array}}
\newcommand{\beas}{\begin{eqnarray*}}
\newcommand{\eeas}{\end{eqnarray*}}
\newcommand{\bea}{\begin{eqnarray}}
\newcommand{\eea}{\end{eqnarray}}
\newcommand{\lb}{\label}
\newcommand{\al}{\alpha}
\newcommand{\R}{\ensuremath{\mathbb R}}
\newcommand{\C}{\ensuremath{\mathbb C}}
\newcommand{\T}{\ensuremath{\mathbb T}}
\newcommand{\Z}{{\mathbf Z}}
\newcommand{\W}{{\mathbf W}}
\newcommand{\proj}{{\mathbf \Pi}}
\newcommand{\PP}{{\mathbf P}}
\newcommand{\dproj}{{\mathbf L}}
\newcommand{\inprod}[2]{\left\langle{#1},{#2}\right\rangle}
\newcommand{\A}{\boldsymbol{\mathrm{A}}  }
\newcommand{\B}{\boldsymbol{\mathrm{B}}  }
\newcommand{\CC}{\boldsymbol{\mathrm{C}}  }
\newcommand{\Vort}{\boldsymbol{\mathrm{Vort}} \, }
\newcommand{\Grad}{\boldsymbol{\mathrm{Grad}} \, }
\newcommand{\Curl}{\boldsymbol{\mathrm{Curl}} \, }
\newcommand{\Curln}{\boldsymbol{\mathrm{Curl}}_{\xh}  }
\newcommand{\DDelta}{\boldsymbol{\mathrm{\Delta}} \, }
\newcommand{\nnabla}{\boldsymbol{\mathrm{\nabla}} \, }
\newcommand{\Div}{\mathrm{Div} \, }
\newcommand{\p}{\boldsymbol{p}}
\newcommand{\calD}{\mathcal D}
\newcommand{\calP}{\mathcal P}
\newcommand{\calO}{\mathcal O}
\newcommand{\ve}{{\bf e}}
\newcommand{\vomega}{\boldsymbol{\mathrm{\omega}} \, }
\newcommand{\vu}{{\bf u}}
\newcommand{\vU}{{\bf U}}
\newcommand{\vf}{{\bf f}}
\newcommand{\vv}{{\bf v}}
\newcommand{\vw}{{\bf w}}
\newcommand{\vz}{{\overline{\bf z}}}
\newcommand{\ovz}{{\bf z}}
\newcommand{\x}{{\bf x}}
\newcommand{\hxi}{\widehat{\boldsymbol{\xi}}}
\newcommand{\xh}{\widehat{\x}}
\newcommand{\vp}{{\bf p}}
\newcommand{\tvu}{{\tilde{\vu}}}
\newcommand{\tvuN}{{\tilde{\vu}_N}}
\newcommand{\tvv}{{\tilde{\vv}}}
\newcommand{\tvw}{{\tilde{\vw}}}
\renewcommand{\tilde}{\widetilde}
\newcommand{\Aphalfe}{\A^{s+1/2} e^{\psi(r \cos\theta) \A^{1/2}}}
\newcommand{\Apone}{\A^{s+1} e^{\psi(r \cos\theta) \A^{1/2}}}
\newcommand{\Ape}{\A^{s} e^{\psi(r \cos\theta) \A^{1/2}}}
\newcommand{\Aphalfea}{\A^{s+1/2} e^{\psi \A^{1/2}}}
\newcommand{\Apea}{\A^{s} e^{\psi \A^{1/2}}}
\author{M. Ganesh}
\address{Department of Mathematical and Computer Sciences, 
        Colorado School of Mines, Golden, CO 80401}
\email{mganesh@mines.edu}	
\author{Q. T. Le Gia}
\address{School of Mathematics and Statistics, 
         University of New South Wales, Sydney, NSW 2052, Australia}
\email{qlegia@unsw.edu.au}
\author{I. H. Sloan}
\address{School of Mathematics and Statistics, 
         University of New South Wales, Sydney, NSW 2052, Australia}
\email{i.sloan@unsw.edu.au}
\title[A pseudospectral method for NSE\lowercase{s} on rotating spheres]
{A pseudospectral quadrature method for Navier-Stokes 
equations on  rotating spheres}
\subjclass[2000]{Primary 65M12; Secondary 76D05}
\keywords{Navier-Stokes equations, unit sphere, vector spherical harmonics}
\date{\today}
\begin{document}
\begin{abstract}
%Numerical algorithms for partial differential equations (PDEs) posed on
%the surface of  spherical geometries play an important role in 
%addressing  error issues  in various
%circulation and  gravitational potential research models on the rotating 
%Earth. In particular,  spectrally accurate global basis algorithms
%facilitate  efficient  tuning of various parameters in such mathematical models 
%to reproduce  the observed data, such as climate over the last fifty years.
In this work, we describe, analyze, and implement a pseudospectral 
quadrature method  for a global computer modeling of the incompressible 
surface Navier-Stokes equations on the rotating  unit sphere. 
Our spectrally accurate numerical error analysis is based on the Gevrey 
regularity of 
the solutions of the  Navier-Stokes equations on the sphere. The scheme 
is designed for convenient application  of fast evaluation techniques 
such as the fast Fourier transform (FFT), and the implementation is based 
on a stable adaptive time discretization. 
\end{abstract}
\maketitle
\setcounter{equation}{0}
%%%%%%%%%%%%%%%%%%%%%%%%%%%%%%%%%%%%%%%%%%%%%%%%%%%%%%%%%%%%%%%%%%%%%%%%%%%%%%%%
\section{Introduction}\label{introduction}
%A fundamental ingredient in developing efficient and robust
%computer modeling of various processes is the knowledge of the global behavior
%(regularity) of the solutions of the associated mathematical models.
In this paper we develop a pseudospectral quadrature method for the 
surface Navier-Stokes partial differential equations (PDEs) 
on the rotating unit sphere.
Whereas the finite element method is best
suited for handling non-smooth processes, 
the spectral global basis computer models are very efficient and 
perform extremely well for processes with smooth regularity.
For example,  exponential convergence properties of the 
global Fourier basis  spectral Galerkin methods (without quadrature)
for  the Ginzburg-Landau and Navier-Stokes PDEs on two dimensional  
periodic cells are based on the Gevrey regularity of solutions of 
the PDEs~\cite{doelman_titi, jones_titi}.

The complex three dimensional flows in the atmosphere and oceans are 
considered 
to be accurately modeled by the Navier-Stokes PDEs of fluid mechanics
together with classical thermodynamics~\cite{atmosp_model_books}. 
Difficulties in computer modeling in these PDEs resulted
in several simplified models for which spectral
approximations are well known~\cite{Boyd, spect_book,atmosp_model_books}.
A famous open problem is to prove the global regularity for the three
dimensional incompressible Navier-Stokes PDEs~\cite{tao_blog}. 
However, the  precise Gevrey regularity of the unique solution of the 
(practically relevant)  surface Navier-Stokes PDEs on the rotating sphere 
was proved in~\cite{cao_rammaha_titi99}.
(Because the Earth's surface is an approximate sphere, a standard
surface model, to study global atmospheric circulation on large 
planets,  is the sphere.)

Consequently, a natural next step is to describe,  
analyze, and implement an exponentially converging 
pseudospectral method for the Navier-Stokes  PDEs on the rotating sphere.
In addition to the continuous model regularity results in~\cite{cao_rammaha_titi99}, 
this paper is  also motivated by the recent work~\cite{fengler_freeden},
where  discrete computer modeling  of the Navier-Stokes PDEs on one-dimensional 
and toroidal domains~\cite{debussche_dubois_temam} 
was extended to the unit sphere.

This paper is concerned with both implementation of our  algorithm and its numerical
analysis. The main  numerical analysis contributions 
compared to  results in~\cite{cao_rammaha_titi99}, for the continuous
problem,  and in~\cite{fengler_freeden}, for a discrete problem, 
are as follows.

For the spatially discrete pseudospectral quadrature 
Galerkin solutions of the Navier-Stokes equations, 
we prove (i)  the stability (that is, uniform boundedness of  
approximate solutions, independent of their truncation parameter $N$),  
see Theorem~\ref{stability_thm}; 
and (ii) a  spectrally accurate 
rate of convergence  [that is, $\calO(N^{-2s})$ accuracy, with $s$ 
depending on the smoothness of input data], see  Theorem~\ref{conv_anal}.
We achieve these results by first generalizing the main regularity result 
in~\cite{cao_rammaha_titi99}  to complex valued times 
(see Theorem~\ref{complex_est} and \ref{complex_estT}), and then using this
to prove the spectral rate of convergence of the time derivative of
a Stokes projection  comparison function (see Theorem~\ref{der_cont_est}). 
This time-derivative error result plays a crucial role in the proof of  
spectral  convergence of the approximate solutions 
(see the proof of Theorem~\ref{conv_anal}). 
We note that  the main result in~\cite[page~978]{fengler_freeden} establishes only 
convergence of the semi-discrete  Galerkin method  without quadrature 
in $L^p$ norms, but does not establish either stability or  
rate of convergence of the scheme.

The rate of  convergence results, supported by numerical experiments, 
formed the core part of research on 
the   Navier-Stokes equations on two dimensional  domains 
over the last few decades, 
see~\cite{debussche_dubois_temam,Foias_book, temam79} and references therein.
There is a vast literature on numerical methods 
and analysis for the Navier-Stokes PDE
on bounded Euclidean domains~(see~\cite{debussche_dubois_temam, Foias_book, temam79} 
and references therein), 
but their counterparts on closed manifolds 
are rarer (see~\cite{fengler_freeden} and references therein).
The implementation of the scheme in~\cite[page~978]{fengler_freeden} is based 
on a fixed time-step explicit Runge-Kutta method that has 
a small stability region for the systems of  ordinary differential equations 
arising from the spatial discretization.

The outline of this paper is as follows. In the next section, we recall
various known preliminary results associated with the Navier-Stokes
PDEs on the unit sphere, in strong and weak form. In Section 3, we introduce
essential  computational and numerical analysis tools required for the 
discretization and analysis of the Navier-Stokes equations. In Section 4,
we describe and prove spectral accuracy of  a pseudospectral quadrature method 
and give implementation details required to apply the FFT 
and adaptive-in-time simulation  of the Navier-Stokes equations.
In Section 5, we demonstrate computationally the accuracy and applicability  
of the algorithm for well known benchmark examples.

\section{Navier-Stokes equations on the rotating unit sphere}
The surface Navier-Stokes equations (NSE) describing a tangential, incompressible
atmospheric stream on  the rotating two-dimensional unit sphere 
$S \subset \R^3$ can be written as~\cite{cao_rammaha_titi99, ilin91, ilin94, ilin_filatov,temam_wang}
\be\label{NSE_sphere}
   \frac{\partial}{\partial t} \vu + \nnabla_\vu \vu - \nu \DDelta \vu 
+ \vomega \times \vu 
+ \frac{1}{\rho} \Grad p = \vf,
   \qquad \Div \vu =0, \qquad \vu|_{t=0} = \vu_0 \qquad \text{on}~~S.
\ee
Here $\vu = \vu(\xh,t) = \left(u_1(\xh,t),u_2(\xh,t), u_3(\xh,t)\right)^T$ 
is the unknown {\em tangential} divergence-free velocity field at $\xh \in S$ 
and $t \in [0,T]$, $p = p(\xh,t)$ is the unknown pressure. The known
components in (\ref{NSE_sphere}) are the constant viscosity and density of the fluid, respectively denoted by $\nu, \rho$, the normal vector field
$\vomega = \vomega(\xh)=\omega(\xh)\xh$ for the Coriolis acceleration term,
and the external flow driving vector field $\vf = \vf(\xh,t)$. The
Coriolis function $\omega$ is given by $\omega(\xh)=2\Omega\cos\theta$,
where $\Omega$ is the angular velocity of the rotating sphere, and $\theta$
is the angle between $\xh$ and the north pole.
The vorticity of the flow associated with the NSE (\ref{NSE_sphere}), 
in the curvilinear coordinate system,  is a normal vector field, defined,  for
a fixed $t \geq 0$,  by 
\begin{equation}\label{eq:vort}
 \Vort \vu (\xh,t)  = \Curln \vu (\xh,t) = \xh \Delta \Psi(\xh,t), \qquad
 \qquad \xh \in S,
\end{equation} 
for some scalar-valued  vorticity stream function $\Psi$.

All spatial derivative operators in  (\ref{NSE_sphere})-(\ref{eq:vort}) are  
 surface differential operators, obtained by restricting the
corresponding domain operators (defined in a  neighborhood of $S$)
to the unit sphere, using standard differential geometry concepts on closed
manifolds in $\R^3$~\cite{ilin91, ilin94}.
Using the fact that the outward unit normal at $\xh\in S$ is $\xh$, the 
$\Curl$ of a scalar function $v$, of a normal vector field $\vw =  w \xh$, 
and of a tangential vector field $\vv$ on $S$ are 
respectively defined by
\begin{equation}\label{Curl}
\Curl v = -\xh \times \Grad v,  \qquad \Curl \vw = -\xh \times \Grad w, \qquad \Curln \vv = -\xh~ \Div(\xh \times \vv).
\end{equation}
The surface diffusion operator acting on  tangential  
vector fields  on $S$ is  denoted by $\DDelta$ 
(known as the Laplace-Beltrami or Laplace-de Rham operator) 
and  is defined as 
\begin{equation}\label{deRham}
 \DDelta \vv = \Grad \Div \vv - \Curl \Curln \vv.
\end{equation}

The following relations connecting the above operators will be used throughout
the paper:
\begin{equation} \label{curln_curl_psi}
   \Div \Curl v = 0, \qquad
   \Curln \Curl v = -\xh \Delta v, \qquad 
 \DDelta \Curl v = \Curl \Delta v,
\end{equation}
\begin{equation}\label{covar_long}
2 \nnabla_\vw \vv = -\Curl(\vw\times \vv)+\Grad(\vw\cdot \vv)-\vv\Div \vw+
   \vw \Div \vv - 
   \vv\times\Curln \vw - \vw \times \Curln \vv.
\end{equation}
In particular, for tangential divergence-free vector fields, such as the 
solution $\vu$ of the NSE, using (\ref{covar_long}), the nonlinear term in (\ref{NSE_sphere}) can be written as 
\begin{equation}\label{short_nonlin1}
\nnabla_\vu \vu = \Grad \frac{|\vu|^2}{2} - \vu \times \Curln \vu.
\end{equation}
   
\subsection{A weak formulation}
A standard technique for removing the scalar pressure field from
the Navier-Stokes equations is to multiply the first equation
in (\ref{NSE_sphere}) by test functions $\vv$ from a space with elements
having  properties of the unknown velocity field $\vu$ (in particular,
$\Div \vv =0$) and then integrate to obtain a weak formulation. 
(The unknown $p$, can then be computed by
solving a pressure Poisson equation, obtained by applying 
the surface divergence operator in (\ref{NSE_sphere}).)

To this end, we introduce the standard inner products on 
the space of all square integrable (i) scalar functions 
on $S$, denoted by $L^2(S)$; and (ii) tangential vector
fields on $S$, denoted by $L^2(TS)$:
\bea
(v_1,\;v_2) &=& (v_1,\;v_2)_{L^2(S)} ~~= \int_{S} v_1 \overline{v_2}~dS, 
\quad \qquad v_2, v_2 \in L^2(S), \label{scal_ip} \\ 
(\vv_1,\;\vv_2)   &=& (\vv_1,\;\vv_2)_{L^2(TS)} = \int_{S} \vv_1 \cdot 
\overline{\vv_2}~dS, \qquad  \vu,\vv \in L^2(TS), \label{vec_ip}
\eea
where $dS = \sin \theta d\theta d\phi$.
Throughout the paper, the  induced norm on  $L^2(TS)$ is   
denoted by $\| \cdot \|$ and for other inner product spaces, 
say $X$ with inner product $(\cdot,\;\cdot )_{X}$, the associated norm is
denoted by  $\| \cdot \|_X$.  For example, for $s > 0$, 
standard norms in the scalar and vector valued functions
Sobolev spaces $H^s(S)$ and  $H^s(TS)$ are  denoted 
by $\| \cdot \|_{H^s(S)}$  and $\| \cdot \|_{H^s(TS)}$,
respectively. Since $H^0(TS) = L^2(TS)$, $\| \cdot \|_{H^0(TS)} = \| \cdot \|$.

We have the following identities for appropriate scalar and
vector fields~\cite[(2.4)-(2.6)]{ilin91}:
\begin{eqnarray}
(\Grad \psi,\; \vv) = - (\psi,\; \Div \vv),  && \qquad 
%%\label{ip_identities0}\\ 
(\Curl \psi,\;\vv)  = (\psi,\; \Curln \vv), 
    \label{ip_identities1} \\   
(\Curl \Curln \vw,\; \ovz) &=& (\Curln \vw,\; \Curln \ovz).
\label{ip_identities2}
\end{eqnarray}
%%In \eqref{ip_identities0},\eqref{ip_identities1}, the  $L^2(TS)$ inner 
In \eqref{ip_identities1}, the  $L^2(TS)$ inner
product is used on the left hand side and the $L^2(S)$ inner product 
is used on the right hand side. Throughout the paper, we identify a 
normal vector field $\vw$ with a
scalar field $w$ and hence 
\begin{equation}\label{normal_ip}
(\psi,\; \vw) := (\psi,\; w)_{L^2(S)}, \qquad 
\vw = \xh w, \qquad \psi, w \in L^2(S).
\end{equation}

Using (\ref{curln_curl_psi}), smooth ($C^\infty$) tangential fields on $S$
can be decomposed into two components, one in the space of all 
divergence-free fields and the other through the 
Hodge decomposition 
theorem~\cite{aubin}:
\begin{equation} \label{Hodge}
  C^\infty(TS) =   C^\infty(TS;\Grad) \oplus  C^\infty(TS;\Curl),
\end{equation}
where 
\begin{equation} \label{orth_space}
  C^\infty(TS;\Grad) = \{\Grad \psi: \psi \in C^\infty(S)\},\;
  %\qquad 
   C^\infty(TS;\Curl) = \{\Curl \psi: \psi \in C^\infty(S)\}.
\end{equation}
For $s \geq 0$,  let $C^{\infty,s}(TS;\Curl)$ denote the closure of
$C^\infty(TS;\Curl)$ in the   $H^s(TS)$ norm. In particular, following
\cite{cao_rammaha_titi99} we introduce a simpler notation 
\beas
  H &=& \mbox{ closure of } C^{\infty}(TS;\Curl) 
        \mbox{ in } L^2(TS)\; =\;  C^{\infty,0}(TS;\Curl), \\
  V &=& \mbox{ closure of } C^{\infty}(TS;\Curl) 
        \mbox{ in } H^1(TS)\; =\; C^{\infty,1}(TS;\Curl).
\eeas
Using the Gauss surface divergence theorem, for any 
scalar valued function $v$ on $S$ with
$\Grad v \in L^2(TS)$, using (\ref{ip_identities1}), 
we have
\begin{equation} \label{div_thm}
(\Grad v,\;\vw) =  \int_{S} \Grad v \cdot  \overline{\vw}~dS
=  - \int_{S}  v \cdot  \Div \overline{\vw}~dS = 0, 
\quad \vw \in  V,
\end{equation}
and hence the unknown pressure can be eliminated from the
first equation in (\ref{NSE_sphere}) through the weak formulation.

Following~\cite[Page~567]{ilin91}, for the diffusion part of the
NSE, we consider the Stokes operator
\begin{equation}\label{A}
\A = \Curl \Curln. %= -\DDelta \PP_{\Curl}  = - \PP_{\Curl} \DDelta
\end{equation}
Using (\ref{deRham}) and (\ref{curln_curl_psi}), it is 
easy to see that the  Stokes operator is the restriction of 
the vector Laplace-de Rham operator $-\DDelta$
on $V$; $\A = - \PP_{\Curl} \DDelta$, 
where  $\PP_{\Curl}:~L^2(TS)~\rightarrow~H$ is the orthogonal projection 
onto the divergence-free tangent space.

For each positive 
 integer $L=1,2,\ldots$, the eigenvalue $\lambda_L$ and the corresponding 
eigenvectors of the   Stokes operator $\A$  are given by 
\begin{equation}\label{stok_eig}
\lambda_L=L(L+1),  \qquad     \Z_{L,m}(\theta,\varphi) = \lambda^{-1/2}_L \Curl Y_{L,m}(\theta,\varphi),
\quad m=-L, \ldots, L, 
\end{equation}
where $Y_{L,m}$ are the scalar orthonormal spherical harmonics of degree $L$,
defined by
\begin{equation}\label{sph_har}
   Y_{L,m}(\theta,\varphi) = 
        \left[\frac{(2L+1)}{4\pi}
      \frac{(L-|m|)!}{(L+|m|)!} \right]^{1/2} P^{m}_{L}(\cos \theta) e^{im\varphi}, \quad m=-L, \ldots, L, 
\end{equation}
with  $P^m_L$ being the associated Legendre polynomials 
%%\begin{equation}\label{asso_leg}
%%   P^m_L(t) = \left\{ 
%%      \begin{array}{ll}
%%           \frac{(1-t^2)^{m/2}}{2^L L!} 
%%           \frac{d^{L+m}}{dt^{L+m}}(t^2-1)^{L}, 
%%		 &\quad \text{for} \quad m \geq 0,\\   
%%           (-1)^m P^{-m}_L &\quad \text{for} \quad m < 0,
%%      \end{array} \right.	   
%%\end{equation}
so that $\overline{Y_{L,m}} = (-1)^m Y_{L,-m}$.

The spectral property $\A  \Z_{L,m} = \lambda_L  \Z_{L,m}$ follows from the fact that $Y_{L,m}$ are eigenfunctions of the scalar Laplace-Beltrami operator $-\Delta$ 
%in (\ref{Lap}) 
with eigenvalues $\lambda_L$, the definition of 
the Stokes operator $\A$ in (\ref{A}), $\Z_{L,m}$ in
(\ref{stok_eig}), (\ref{curln_curl_psi}), and (\ref{Curl}).
Since $\{Y_{L,m}: L=0,1,\ldots; m =-L,\ldots, L\}$ is an orthonormal
basis for $L^2(S)$, it is easy to see that 
$\{\Z_{L,m}: L=1,\ldots; m =-L,\ldots, L\}$ is an orthonormal basis for $H$.
Thus an arbitrary $\vv \in H$ can be written as
\begin{equation}\label{Fourier_coeff}
\vv=\sum_{L=1}^\infty \sum_{m=-L}^L \widehat{\vv}_{L,m} \Z_{L,m},
\qquad  
\widehat{\vv}_{L,m} = \int_S \vv \cdot \overline{\Z_{L,m}} dS =
(\vv,\;\Z_{L,m}).
\end{equation}

We consider a subset of $H$,
\begin{equation}\label{Gev0_space}
 \calD(\A^{s/2}) = 
\left\{\vv \in  H \; : \vv = \sum_{L=1}^\infty \sum_{m=-L}^{L} \widehat{\vv}_{L,m} \Z_{L,m},
              \quad\sum_{L=1}^\infty \sum_{m=-L}^L \lambda^s_L |\widehat{\vv}_{L,m}|^2<\infty 
               \right\},
\end{equation}
which is the divergence-free subset of the Sobolev space $H^s(TS)$.
%where
%\begin{equation}\label{Fourier_coeff}
%\widehat{\vv}_{L,m} = \int_S \vv \cdot \overline{\Z_{L,m}} dS =
%(\vv,\;\Z_{L,m}).
%\end{equation}
For every $\vv \in \calD(\A^{s/2})$, % \subset H^s(TS) 
we set 
\begin{equation}\label{Hs-norm}
\| \vv \|_{H^s(TS)} = \left[\sum_{L=1}^\infty \sum_{m=-L}^L  
\lambda^s_L  |\widehat{\vv}_{L,m}|^2 \right]^{1/2},
\end{equation}
and for $\vv\in \calD(\A^{s/2})$, we define
\begin{equation}\label{As-def}
\A^{s/2} \vv := 
\quad\sum_{L=1}^\infty 
  \sum_{m=-L}^L \lambda^{s/2}_L \widehat{\vv}_{L,m} \Z_{L,m} \quad \in H.
\end{equation}
%Throughout the paper, the normal field vector $\vomega$ for the
%Coriolis term in (\ref{NSE_sphere}) is written as  
%$\vomega(\xh) = \omega(\xh) \xh$, where  the Coriolis parameter
%$\omega$ is a smooth scalar valued function on $S$. A standard
%form, assumed throughout the paper,  for the parameter is 
%$\omega(\xh)  = 2 \Omega \cos \theta$, with $\Omega$ being the constant
%rotation rate of the sphere. 
For a tangential vector field $\vv$ on $S$, we define the Coriolis operator $\CC$,
\begin{equation}\label{C}
(\CC\vv)(\xh) = \vomega(\xh) \times \vv(\xh) = \omega(\xh) (\xh \times \vv), \qquad 
\omega(\xh)  = 2 \Omega \cos \theta. 
\end{equation}

To treat the nonlinear term in  (\ref{NSE_sphere}), 
we consider the trilinear form $b$ on  $V \times  V \times V$, 
defined as  
\begin{equation}\label{trilinear}
b(\vv,\vw,\ovz) = (\nnabla_\vv \vw,  \ovz) =  \int_{S} \nnabla_\vv \vw \cdot
\vz~dS, \qquad \vv, \vw, \ovz \in  V.%C^{\infty,1}(TS;\Curl).
\end{equation}
Using (\ref{covar_long}) and (\ref{ip_identities1}), for divergence free
fields $\vv, \vw, \ovz$,  the trilinear
form can be written as
\be
b(\vv,\vw,\ovz) =  \frac{1}{2} \int_S \left[-\vv \times \vw \cdot \Curln \vz  +
             \Curln \vv \times \vw \cdot \vz - \vv \times \Curln \vw \cdot \vz\right]~dS.
\label{short_b}
\ee
Moreover~\cite[Lemma 2.1]{ilin91}
\be \lb{skew}
   b(\vv,\vw,\vw)=0, \qquad b(\vv,\ovz,\vw) = -b(\vv,\vw,\ovz)
\qquad \vv, \vw, \ovz \in V.
\ee
Throughout the paper,  the space $V$ is equipped 
with the norm $\| \cdot \|^2_V = \left(\A \cdot, \cdot\right)$.
%Restriction of the trilinear form to $V \times  V \times V$
%is essential to obtain the following crucial property to ensure
%the uniqueness of the weak solution of the Navier-Stokes equations.
%%\                                               begin{lemma}\label{lem:trilinear}\cite[Lemma 2.1]{ilin91}
%%Let $V$ be equipped
%%with $\| \cdot \|^2_V = \left(\A \cdot, \cdot\right)$. 
%%The trilinear form $b$ is continuous on $V \times V \times V$, i.e.
%%\[
%%  |b(\vv,\vw,\ovz)| \le C \|\vv\|_{V} \|\vw\|_{V} \|\ovz\|_{V}, 
%\qquad \vv, \vw, \ovz \in V.
%%\]
%%Moreover, 
%%\be \lb{skew}
%%   b(\vv,\vw,\vw)=0, \qquad b(\vv,\ovz,\vw) = -b(\vv,\vw,\ovz)
%%\qquad \vv, \vw, \ovz \in V.
%%\ee
%%\end{lemma}

Thus, using (\ref{deRham}),  (\ref{ip_identities1}), (\ref{A}), and (\ref{short_b}), a \emph{weak solution} of the Navier-Stokes equations (\ref{NSE_sphere}) is a vector field 
%$\vu \in L^2([0,T]; C^{\infty,1}(TS;\Curl))$ 
$\vu \in L^2([0,T];V)$ 
with $\vu(0)= \vu_0$ that satisfies the weak form 
\begin{equation} \label{weak_form}
  (\vu_t,\vv) + b(\vu,\vu,\vv) + \nu(\Curln \vu, \Curln \vv) 
+ (\CC\vu, \vv)  = (\vf,\vv), \qquad  \vv \in  V. %C^{\infty,1}(TS;\Curl).
\end{equation}
This weak formulation can be written in operator equation form on 
%$C^{\infty,1, *}(TS;\Curl)$, the adjoint of $C^{\infty,1}(TS;\Curl)$:
$V^*$, the adjoint of $V$:
%Let $\vf \in L^2([0,T];C^{\infty, 1, *}(TS;\Curl) )$ and 
Let $\vf \in L^2([0,T];V^*)$ and 
%$\vu_0 \in C^{\infty,0}(TS;\Curl)$. 
$\vu_0 \in H$. 
%Find a vector field  $\vu \in L^2([0,T]; C^{\infty,1}(TS;\Curl))$, with
Find a vector field  $\vu \in L^2([0,T]; V)$, with
%$\vu_t \in L^2([0,T];C^{\infty,1, *}(TS;\Curl) )$
$\vu_t \in L^2([0,T];V^*)$
such that 
\begin{equation}\label{op_form}
    \vu_t + \nu \A\vu + \B(\vu,\vu) + \CC\vu = \vf, \qquad \vu(0) = \vu_0,
\end{equation}
where the bilinear form $\B(\vu,\vv) \in V^*$ is defined by 
\begin{equation}\label{B}
  (\B(\vu,\vv),\vw) = b(\vu,\vv,\vw) \qquad  \vw \in V.
\end{equation}
In the subsequent error analysis, we need the following 
estimate for the nonlinear term (see Lemma~\ref{lem:weaklip} 
in Appendix):
\be\label{Adelta}
  \|\A^{-\delta} \B(\vu,\vv) \| \le  \begin{cases}C\|\A^{1-\delta}\vu\| \|\vv\|\le C\|\A^{1/2}\vu\|\|\vv\|, &\cr
                                          C\|\vu\|\|\A^{1-\delta}\vv\| \le
                                          C\|\vu\|\|\A^{1/2}\vv\|, &
                                   \end{cases}    \quad   \delta \in
                                   (1/2,1) \quad \vu, \vv \in V.
\ee
In \eqref{Adelta}, as throughout the paper, $C$ is a generic constant
independent of $\vu$ and $\vv$, (and the discretization parameter 
$N$ introduced in Section~~\ref{sec:proj}).  
%Details of the proof can be adapted 
%from~\cite[Chapter~6]{constantin_foias}, in which one can use the Sobolev 
%embedding theorem on the sphere \cite{aubin} instead of the Sobolev 
%embedding theorem for a bounded domain.
From \eqref{Adelta} we deduce the weak Lipschitz continuity
property
\begin{equation}
 \|\A^{-\delta}(\B(\vv,\vv)-\B(\vw,\vw))\| \le C \|\vv-\vw\|, \quad \delta \in (1/2,1), 
\quad \text{if } \|\A^{1/2}\vv\| ,\|\A^{1/2}\vw\| < C.
\label{weakLip}
\end{equation}

The existence and uniqueness of the  solution 
%$\vu \in L^2([0,T]; C^{\infty,1}(TS;\Curl))$
$\vu \in L^2([0,T]; V)$ of the weak formulation (\ref{weak_form}) are discussed 
in~\cite{ilin91,ilin94, ilin_filatov}. 
A regular solution of the Navier-Stokes equations (\ref{NSE_sphere}) on
$[0,T]$ is a tangential divergence-free velocity field $\vu$
that satisfies  the equation obtained by integrating in time the weak form  
 (\ref{weak_form}), from $t_0$ to $t$, for almost every 
$t_0, t \in [0,T]$.
%Let $s\ge 0$, $\vf \in C([0,T];\calD(\A^{s+1/2}))$, we have the 
%following uniform bound (see Appendix):
%\be\label{uniform_bound}
%   \|\A^{s+1/2} \vu(t) \| \le M,
%\ee
%where $M$ depends on $\|\A^{s+1/2}\vu(0)\|$.
In order to recall the existence, uniqueness, and 
Gevrey regularity of the regular solution,
we need a few more additional  details from~\cite{cao_rammaha_titi99}. 
These are also needed as tools for analyzing our 
pseudospectral quadrature method. 
%%%for simulating (\ref{weak_form}).
%%%%%%%%%%%%%%%%%%%%%%%%%%%%%%%%%%%%%%%%%%%%%%%%%%%%%%%%%%%%%%%%%%%%%%%%%
\subsection{Gevrey regularity of  regular solution}
The Gevrey class of functions of order $s>0$ and index $\sigma>0$,
associated with the Stokes operator defined in (\ref{A}),  is denoted by
  $G^{s/2}_\sigma$ and is defined as 
\begin{equation}\label{Gev_space}
  G^{s/2}_\sigma := \calD(\A^{s/2} e^{\sigma A^{1/2}}) \subset\calD(\A^{s/2}).
%  , \qquad \calD(\A^{s/2}) =
%  H \cap H^s(TS).
  %C^{\infty,0}(TS;\Curl)\cap H^s(TS).
\end{equation}
Using (\ref{Gev0_space}), the Gevrey space
\begin{equation}\label{Gev_space_char}
G^{s/2}_\sigma = 
\left\{\vv \in \calD(\A^{s/2}) \ : \vv = \sum_{L=1}^\infty \sum_{m=-L}^{L} \widehat{\vv}_{L,m} \Z_{L,m},
              \;\sum_{L=1}^\infty \sum_{m=-L}^L \lambda^s_L
              e^{2\sigma\lambda^{1/2}_{L}}  |\widehat{\vv}_{L,m}|^2<\infty 
               \right\}
\end{equation}
is a Hilbert space with respect to the inner product
\begin{equation}\label{Gev_ip}
 {\inprod{\vv}{\vw}}_{G^{s/2}_\sigma} = 
\sum_{L=1}^\infty \sum_{m=-L}^L  \lambda^s_L e^{2\sigma\lambda^{1/2}_{L}} 
     \widehat{\vv}_{L,m} \overline{\widehat{\vw}_{L,m}},
    \qquad  \vv,\vw \in G^{s/2}_\sigma.
\end{equation}
%The following result is proved in \cite{cao_rammaha_titi99} and 
%\cite{Titi2009}.
First we recall the following result from \cite{cao_rammaha_titi99, Titi2009}. 
% THEOREM 2.1
\begin{theorem}\label{reg_thm}
If $\vu_0 \in \calD(\A^{s+1/2})$ and
$\vf \in L^\infty((0,\infty);\calD(\A^s e^{\sigma_1 \A^{1/2}}))$,
for some  $s, \sigma_1 > 0$,  then  for all $t>0$ there exists
a $T^* > 0$, depending only on $\nu,\vf$, and  
$\|\A^{s+1/2} \vu_0\|_{L^2(TS)}$, such that 
the NSE (\ref{NSE_sphere}) on $S$ have a unique
regular solution $\vu(\cdot, t)$ 
%for all $t \in (0, T^*]$ 
and $\vu(\cdot,t) \in G^{s+1/2}_{\sigma(t)}$, 
where $\sigma(t) = \min\{t,T^*,\sigma_1\}$.
%If the solution satisfies 
%\be\lb{ass:solutionbounded}
%  \|\A^{s+1/2} \vu(\cdot,t)\| \le M \mbox{ for all } t \ge 0
%\ee
%then $\vu(\cdot,t) \in G^{s+1/2}_{\sigma(t)}$ for all $t>0$.
\end{theorem}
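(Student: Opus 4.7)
The plan is to carry out a Foias--Temam Gevrey energy argument, adapted to the spherical setting through the spectral basis $\{\Z_{L,m}\}$. First I would work at the Galerkin level: let $P_N$ denote the $L^2(TS)$-orthogonal projection onto $\mathrm{span}\{\Z_{L,m} : 1 \le L \le N,\ |m|\le L\}$ and $\vu_N$ the solution of
\begin{equation*}
\partial_t \vu_N + \nu \A \vu_N + P_N\B(\vu_N,\vu_N) + P_N\CC\vu_N = P_N\vf, \qquad \vu_N(0) = P_N\vu_0,
\end{equation*}
which is a finite-dimensional quadratic ODE and therefore admits a local holomorphic extension $\vu_N(\zeta)$ in a complex-time sector. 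Fixing a ray $\zeta = \tau e^{i\theta}$ with $|\theta|$ small and setting $\sigma = \mathrm{Re}\,\zeta = \tau\cos\theta$, the workhorse quantity is
\begin{equation*}
\Phi_N(\tau,\theta) := \|\A^{s+1/2} e^{\sigma \A^{1/2}} \vu_N(\zeta)\|^2.
\end{equation*}

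Differentiating $\Phi_N$ along the ray and using the complexified Galerkin equation yields an identity whose dissipative part is $-2\nu\cos\theta\,\|\A^{s+1} e^{\sigma\A^{1/2}}\vu_N\|^2$; the contribution of differentiating the exponential weight produces $2\cos\theta\,\|\A^{s+3/4} e^{\sigma\A^{1/2}}\vu_N\|^2$, which is absorbable into the dissipation by interpolation; the forcing generates a term controlled by $\|\vf\|_{\calD(\A^s e^{\sigma_1\A^{1/2}})}$ as long as $\sigma \le \sigma_1$; and the Coriolis contribution is bounded by $C(\Omega)\,\Phi_N$ via a commutator estimate exploiting $\omega \in C^\infty(S)$. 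This isolates the main difficulty in the nonlinear term
\begin{equation*}
\mathcal{N}_N := \mathrm{Re}\,e^{i\theta}\bigl(\A^s e^{\sigma \A^{1/2}}\B(\vu_N,\vu_N),\;\A^{s+1} e^{\sigma \A^{1/2}}\vu_N\bigr).
\end{equation*}

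The hard part is the bound on $\mathcal{N}_N$. My plan is to establish a Gevrey product inequality
\begin{equation*}
|\mathcal{N}_N| \le C\,\|\A^{s+1/2} e^{\sigma\A^{1/2}}\vu_N\|\,\|\A^{s+1} e^{\sigma\A^{1/2}}\vu_N\|^2,
\end{equation*}
whose backbone is a selection-rule argument: whenever $\Z_{P,\cdot}$ appears in the spherical expansion of a product of two vector spherical harmonics $\Z_{L,\cdot}$ and $\Z_{M,\cdot}$, Clebsch--Gordan coupling forces $|L-M|\le P\le L+M$, so that $\lambda_P^{1/2}\le \lambda_L^{1/2}+\lambda_M^{1/2}$ and hence $e^{\sigma\lambda_P^{1/2}}\le e^{\sigma\lambda_L^{1/2}}\,e^{\sigma\lambda_M^{1/2}}$. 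This lets the exponential weights factor over Fourier coefficients exactly as in the Euclidean case; combined with Cauchy--Schwarz and a Sobolev-on-$S$ embedding this yields the displayed bound. Absorbing one factor of $\|\A^{s+1}e^{\sigma\A^{1/2}}\vu_N\|$ into the dissipation via Young's inequality leaves a Riccati-type differential inequality
\begin{equation*}
\tfrac{d}{d\tau}\Phi_N \le C_1\Phi_N^{2} + C_2\,\|\vf\|_{\calD(\A^s e^{\sigma_1\A^{1/2}})}^2.
\end{equation*}

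A standard super-solution comparison then produces a time $T^\ast > 0$, depending only on $\nu$, $\|\vf\|_{\calD(\A^s e^{\sigma_1\A^{1/2}})}$ and $\|\A^{s+1/2}\vu_0\|$, on which $\Phi_N(\tau,\theta)$ is bounded uniformly in $N$ for all $|\theta|$ below some threshold. Restricting to $\theta = 0$, so that $\sigma = t$, and setting $\sigma(t) = \min\{t,T^\ast,\sigma_1\}$ yields a Gevrey bound on $\vu_N(t)$ uniform in $N$. Finally, weak-$\ast$ compactness in $L^\infty((0,T^\ast);G^{s+1/2}_{\sigma(t)})$ together with Aubin--Lions compactness in $L^2((0,T^\ast);H)$ extracts a limit, and the uniqueness of the regular solution in $V$ from \cite{ilin91,ilin94,ilin_filatov} identifies this limit with $\vu$, upgrading its regularity to $\vu(\cdot,t)\in G^{s+1/2}_{\sigma(t)}$.
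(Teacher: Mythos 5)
You should first note that the paper does not prove this statement at all: Theorem~\ref{reg_thm} is explicitly \emph{recalled} from \cite{cao_rammaha_titi99, Titi2009}, so the only in-paper material to compare against is the closely parallel complex-time argument of Theorems~\ref{complex_est}--\ref{complex_estT}. Your overall strategy (Galerkin approximation, a Gevrey energy identity for $\|\A^{s+1/2}e^{\sigma\A^{1/2}}\vu_N\|^2$, a triangle/selection-rule inequality $\lambda_P^{1/2}\le\lambda_L^{1/2}+\lambda_M^{1/2}$ to factor the exponential weights, absorption into the dissipation, a Riccati-type ODE, then compactness and uniqueness) is indeed the Foias--Temam scheme that \cite{cao_rammaha_titi99} adapts to the sphere, and your treatment of the weight-differentiation term and of the Coriolis term (which in fact contributes \emph{exactly zero} for real times, since $\CC$ is diagonal on the $\Z_{L,m}$ with purely imaginary entries, cf.~\eqref{Coriolis_ZLm} and \cite[Lemma~1]{cao_rammaha_titi99}; your weaker commutator bound would still suffice) is sound.

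The genuine gap is in the central nonlinear estimate. Writing $\vu_\A=\A^{s+1/2}e^{\sigma\A^{1/2}}\vu_N$, your displayed Gevrey product inequality reads
\begin{equation*}
|\mathcal{N}_N|\le C\,\|\vu_\A\|\,\|\A^{1/2}\vu_\A\|^2 ,
\end{equation*}
which is the borderline exponent $p=2$ in the family $\|\vu_\A\|^{3-p}\|\A^{1/2}\vu_\A\|^{p}$. A term carrying the full factor $\|\A^{1/2}\vu_\A\|^2$ cannot be absorbed into the dissipation $\nu\cos\theta\,\|\A^{1/2}\vu_\A\|^2$ by Young's inequality: you would need $C\|\vu_\A\|\le\nu\cos\theta$, i.e.\ a smallness condition on the solution, and no Riccati inequality results. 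Your next sentence (``absorbing one factor \dots leaves $\frac{d}{d\tau}\Phi_N\le C_1\Phi_N^2+\cdots$'') is internally inconsistent with the displayed bound, since a quadratic Riccati right-hand side $\Phi_N^2=\|\vu_\A\|^4$ requires an estimate of the form $\|\vu_\A\|^{2}\|\A^{1/2}\vu_\A\|$, i.e.\ $p=1$. Moreover, $p=1$ is stronger than what is actually available on the sphere: the estimate proved in \cite[Lemma~2]{cao_rammaha_titi99} and used in the paper's \eqref{Buu_est} has $p=\max\{2-s,7/4\}$, which after Young's inequality yields the \emph{quintic} inequality $\frac{d}{dr}y\le Cy^{5}$ (since $(3-p)/(2-p)=5$ for $p=7/4$), not a quadratic one. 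The argument still closes with $p<2$, but $T^*$ then scales like $(1+\|\A^{s+1/2}\vu_0\|^2)^{-4}$ rather than $\Phi_N(0)^{-1}$, so you must either prove your sharper product inequality on the sphere (which I do not believe follows from the Clebsch--Gordan selection rule plus Cauchy--Schwarz alone) or replace it by the $p=7/4$ estimate and redo the ODE comparison accordingly.
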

%In addition, from \cite{ilin94a}, we have $\|\A^{s+1/2}\vu(t)\|\le M$,
%for $t>0$, where $M$ depends on $\|\A^{s+1/2} \vu_0\|$. Therefore, from
In addition, from the assumption and proof of Theorem~\ref{reg_thm} 
\cite[page 355]{cao_rammaha_titi99}, 
%%we also have
\be\label{bounded_soln}
 \| \A^{s+1/2} e^{\sigma(t)\A^{1/2}} \vu(t)\|^2
  % \le 1 + 2\|\A^{s+1/2} \vu_0 \|^2, \qquad t>0.
  \le M_0, \qquad t>0,
\ee
%%%%%%%%%%%%%%%%%%%%%%%%%%%%%%%%%%%%%%%%%%%%%%%%%%%%%%%%%%%%%%%
where $M_0$ depends on $\|\A^{s+1/2}\vu(0)\|$, 
$\sup_{t\ge 0}\|\A^s \vf(\cdot,t)\|$ and $\nu$ but not on $t$.
%The Gevrey regularity of the solution of the NSE implies 
%that $\vu$ is an analytic function (in time) with radius of analyticity
%at least $\sigma(t)$~\cite[page~347]{cao_rammaha_titi99}. 
The bound in (\ref{bounded_soln}) is useful for establishing
the quality of approximation of the Stokes projection  of $\vu$ in the next 
section (see Theorem~\ref{cont_est}). It is also convenient
to have a similar bound for the time derivative of the Stokes
projections with $t$ in (\ref{bounded_soln}) 
replaced with certain complex times $\zeta \in \C$,  to
prove  the power of approximation of the time derivative 
of the Stokes projection  (see Theorem~\ref{der_cont_est}).
To this end, we consider the NSE extended to 
complex times $\zeta$,
\be\lb{NSE_complex}
\frac{d\vu}{d\zeta} + \nu \A \vu + \B(\vu,\vu) + \CC\vu = \vf,
\quad \Div\vu=0, \quad \vu(0)=\vu_0, \qquad \zeta \in \C,  \qquad \text{on}~~S ,
\ee
with standard complexification (see~\cite{Foias_book}) of all the spaces and 
operators introduced earlier. 
In the next theorem, we extend arguments 
used in~\cite{Foias_book}  
for the solution of the NSE on 
the plane to the case of the sphere. The arguments
differ in an essential way only for the nonlinear term.

%%%%%%%%%%%%%%%%%%%%%%%%%%%%%%%%%%%%%%%%%%%%%%%%%%%%%%
% THEOREM 2.2.
\begin{theorem}\label{complex_est}
%Let $\vu_0 \in \calD(\A^{s+1/2})$ and
%$\vf \in L^\infty((0,\infty);\calD(\A^s e^{\sigma_1 \A^{1/2}}))$, with
Let $\vu_0 \in \calD(\A^{s+1/2})$ and
$\vf \in C([0,T];\calD(\A^{s} e^{\sigma_1 \A^{1/2}}))$, with
$s~\geq~1/4$. Let the domain $\T$ be defined by
\[
\T :=  \{\zeta=r e^{i\theta}: 0 \leq r \leq T;\ |\theta| \le \pi/4\}.
\]
We assume further that $\vf(\cdot,\zeta)$ is analytic for 
$\zeta\in\T$ and that
\be\label{supf_in_T}
K := 
\sup \{ \|\Ape \vf(\cdot,\zeta)\|^2 : \zeta=re^{i\theta} \in \T\} < \infty.  
\ee
Then there exists $T^{**} > 0$  such that
\be\label{bounded_soln-complex-local}
 \| \A^{s+1/2} e^{\psi(r\cos\theta)\A^{1/2}} \vu(\zeta) \|^2
   \le M_1,  \qquad \zeta \in \T \mbox{ and } |\zeta| \le T^{**},  
\ee
where $M_1$ depends on $\|\A^{s+1/2} \vu(0)\|$ and 
hence $\vu(\cdot,\zeta) \in 
G^{s+1/2}_{\psi(r\cos\theta)}$, where \\ 
$\psi(x)~:=~\min\{x,T^{**},\sigma_1\}$. 
\end{theorem}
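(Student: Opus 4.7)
\emph{Proof plan.} The idea is to adapt the Foias--Temam--Titi Gevrey energy method to the rotating sphere, with the essential novelty concentrated in the trilinear term $\B(\vu,\vu)$. I would parametrise each $\zeta\in\T$ as $\zeta=re^{i\theta}$ with fixed $\theta\in[-\pi/4,\pi/4]$ and regard $\vu$ as a function of $r$ along the ray $\{re^{i\theta}\}$. On this ray equation \eqref{NSE_complex} reads
\begin{equation*}
\frac{d\vu}{dr} \;=\; e^{i\theta}\bigl(\vf - \nu\A\vu - \B(\vu,\vu) - \CC\vu\bigr),
\end{equation*}
and short-time existence of an analytic solution in the complexified setting is obtained by a standard Galerkin / fixed-point argument; the crux is therefore an a priori Gevrey bound uniform in $\theta$.

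The next step is to take the $L^2(TS)$ inner product of the ray equation with $\A^{2s+1}e^{2\psi(r\cos\theta)\A^{1/2}}\vu$ and extract the real part. Because $\A$ and $e^{\psi(r\cos\theta)\A^{1/2}}$ are self-adjoint and commute, this yields the energy identity
\begin{equation*}
\tfrac12\frac{d}{dr}\|\Aphalfe\vu\|^2 + \nu\cos\theta\,\|\Apone\vu\|^2
\;=\; \psi'(r\cos\theta)\cos\theta\,\|\A^{s+3/4}e^{\psi(r\cos\theta)\A^{1/2}}\vu\|^2 + \mathrm{Re}(I_f + I_C + I_B),
\end{equation*}
where $I_f$, $I_C$, $I_B$ denote the forcing, Coriolis and nonlinear contributions. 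Using $0\le\psi'\le1$ and $\cos\theta\ge\tfrac{\sqrt 2}{2}$ on $\T$, the $\psi'$ term is absorbed into the diffusion by interpolation $\|\A^{s+3/4}\cdot\|^2\le\|\Aphalfe\cdot\|\,\|\Apone\cdot\|$; the forcing is bounded via $I_f=(\Ape\vf,\Apone\vu)$ and Cauchy--Schwarz against $\sqrt K$ from \eqref{supf_in_T}; and $I_C$ is controlled by $C\|\Aphalfe\vu\|^2$ since $|\omega|\le2\Omega$ and $|\xh\times\vu|\le|\vu|$.

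The main obstacle is the nonlinear term $I_B = e^{i\theta}(\Aphalfe\B(\vu,\vu),\Aphalfe\vu)$. In the planar case Foias and Temam exploit the Fourier algebra property $e^{\sigma|k+\ell|}\le e^{\sigma|k|}e^{\sigma|\ell|}$; on the sphere I would use the analogous subadditivity $\sqrt{\lambda_L}\le\sqrt{\lambda_{L_1}}+\sqrt{\lambda_{L_2}}$ valid whenever the triple coefficient $b(\Z_{L_1,m_1},\Z_{L_2,m_2},\overline{\Z_{L,m}})$ is nonzero, which reduces to the vector spherical harmonic selection rule $|L_1-L_2|\le L\le L_1+L_2$. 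This is the step where the spherical argument differs essentially from the planar one, and where the hypothesis $s\ge 1/4$ enters via a Sobolev-type product estimate on $S$ needed to close
\begin{equation*}
|I_B|\;\le\; C\,\|\Ape\vu\|\,\|\Aphalfe\vu\|\,\|\Apone\vu\|.
\end{equation*}
Young's inequality then absorbs one factor of $\|\Apone\vu\|^2$ into $\nu\cos\theta\,\|\Apone\vu\|^2$.

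Collecting the bounds produces a Riccati-type differential inequality
\begin{equation*}
\frac{dy}{dr}\;\le\; C_1 + C_2\,y^{p},\qquad y(r):=\|\Aphalfe\vu(re^{i\theta})\|^2,\qquad p>1,
\end{equation*}
with $y(0)=\|\A^{s+1/2}\vu_0\|^2$ and constants depending only on $\nu$, $\Omega$, $s$, $K$, and not on $\theta$. A standard comparison argument then furnishes $T^{**}>0$, depending only on $\|\A^{s+1/2}\vu_0\|$, $K$, $\nu$ and $\Omega$, such that $y(r)\le M_1$ for all $r\in[0,T^{**}]$. This gives \eqref{bounded_soln-complex-local} uniformly for $\zeta\in\T$ with $|\zeta|\le T^{**}$, and membership $\vu(\cdot,\zeta)\in G^{s+1/2}_{\psi(r\cos\theta)}$ follows at once from the characterisation \eqref{Gev_space_char}.
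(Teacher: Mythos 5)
Your overall architecture is exactly the paper's: complexify, work along rays $\zeta=re^{i\theta}$ with $|\theta|\le\pi/4$, run a Gevrey energy estimate on $\vu_{\A}=\Aphalfe\vu$, and integrate the resulting Riccati inequality to get a lifespan $T^{**}$ depending only on $\|\A^{s+1/2}\vu_0\|$. Two of your intermediate steps, however, do not hold up as written. For the Coriolis term you claim $|I_C|\le C\|\Aphalfe\vu\|^2$ ``since $|\omega|\le 2\Omega$ and $|\xh\times\vu|\le|\vu|$.'' Pointwise bounds of this kind control only $\|\CC\vu\|_{L^2}$; they say nothing about $\|\Aphalfe\CC\vu\|$, because multiplication by $\omega$ and the map $\vv\mapsto\xh\times\vv$ do not commute with $\A^{s+1/2}e^{\psi\A^{1/2}}$. (A bound of this type is salvageable because $\omega=2\Omega\cos\theta$ shifts spherical-harmonic degrees by at most one, but that is a different argument from the one you give.) The paper sidesteps the issue entirely: by \cite[Lemma~1]{cao_rammaha_titi99} one has the exact cancellation $(\A^{s}e^{\psi(r\cos\theta)\A^{1/2}}\CC\vu,\Apone\vu)=0$, so the Coriolis term simply drops out of the energy identity. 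Use that.

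The second and more serious issue is the nonlinear term, which you rightly call the crux but leave as an assertion: the inequality $|I_B|\le C\|\Ape\vu\|\,\|\Aphalfe\vu\|\,\|\Apone\vu\|$ is not proved, only motivated by subadditivity of $\sqrt{\lambda_L}$ and an unspecified Sobolev product estimate. The paper does not reprove this step either; it invokes \cite[Lemma~2]{cao_rammaha_titi99}, valid for any real Gevrey index and hence applicable with index $\psi(r\cos\theta)$, in the form $|(\Aphalfea\B(\vu,\vu),\vu_{\A})|\le C\|\vu_{\A}\|^{3-p}\|\A^{1/2}\vu_{\A}\|^{p}$ with $p=\max\{2-s,7/4\}$. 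The hypothesis $s\ge 1/4$ enters precisely here: it forces $p=7/4<2$, so Young's inequality with exponent $2/(2-p)=8$ absorbs $\|\A^{1/2}\vu_{\A}\|^{p}$ into the dissipation $\nu\cos\theta\|\A^{1/2}\vu_{\A}\|^2$ and leaves the power $(3-p)/(2-p)=5$, whence $y'\le Cy^5$. Your claimed trilinear bound has different exponents and is unestablished; to make the proof self-contained you must actually prove a Gevrey trilinear estimate of this kind on the sphere (this is where the real work lies), and you should identify explicitly how $s\ge1/4$ is consumed rather than attributing it vaguely to ``a Sobolev-type product estimate.''
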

%%%%%%%%%%%%%%%%%%%%%%%Derivative continuous estimate%%%%%%%%%%%%%%%
\begin{proof}
Let $\zeta = r e^{i\theta}$ with $r > 0$ and $|\theta| \leq  \pi/4$, 
and let 
\[
\vu_{\A}(\zeta) :=  \Aphalfe \vu(\zeta).
\]
%$\left\|\vu_{\A}(\zeta) \right\| > 1$. 
The definition of $\psi$ gives 
$\frac{d}{dx} \psi  :=  \psi'(x) \le 1$, and $\psi \le
\sigma_1$, thus for fixed $\theta$ we find
\bea
\frac{d}{dr} \vu_{\A}(\zeta)  & = & 
\psi'(r\cos\theta) \cos\theta  \Apone \vu(\zeta) 
\nonumber \\ 
& & 
+ e^{i\theta} \Aphalfe \frac{d\vu}{d\zeta}(\zeta). \label{der2}
\eea
Using \eqref{der2} in 
\beas\lb{der1}
\frac{1}{2}\frac{d}{dr} \left\|\vu_{\A}(\zeta) \right\|^2 =
\Re\left(\frac{d}{dr} \vu_{\A}(\zeta), \vu_{\A}(\zeta) \right),
%\Re\left(\frac{d}{dr} ( \Aphalfe \vu(\zeta), \Aphalfe \vu(\zeta)) \right),
\eeas
where $\Re(\cdot)$ denotes the
real-part function,  we get
\bea
\frac{1}{2} \frac{d}{dr} \| \vu_{\A}(\zeta)\|^2  
&=&
\psi'(r\cos\theta)\cos\theta\; 
 \Re\;(\A^{1/2} \vu_{\A}(\zeta), \vu_{\A}(\zeta)) 
\nonumber  \\ &&
+\Re\; e^{i\theta} (\Aphalfe \frac{d\vu}{d\zeta}(\zeta) , \vu_{\A}(\zeta)). 
\lb{der3} \eea
Using \eqref{NSE_complex}  
for the last term in \eqref{der3} 
together with 
the fact that 
\[
(\A^s e^{\psi(r\cos\theta)\A^{1/2}}\CC\vu,\Apone\vu) = 0
\]
(see \cite[Lemma 1]{cao_rammaha_titi99}), we find 
\bea
& & \frac{1}{2} \frac{d}{dr} \| \vu_{\A} (\zeta) \|^2
+ \nu \cos\theta \|\A^{1/2} \vu_{\A}(\zeta)  \|^2 
\nonumber \\  & = &  
\psi'(r\cos\theta) \cos\theta(\A^{1/2} \vu_{\A} ,\vu_{\A} )
  - \Re\; e^{i\theta} ( \Aphalfea \B(\vu,\vu),\vu_{\A}) 
\nonumber\\ & &  
+ \Re\; e^{i\theta} (\Apea\vf, \A^{1/2} \vu_{\A}),  \label{main_ineq}
\eea
where in \eqref{main_ineq} and below we write 
$\vu_{\A} = \vu_{\A}(\zeta),  \vu = \vu(\zeta), 
\vf = \vf(\zeta),  \psi =  \psi(r \cos\theta)$.
From \cite[Lemma 2]{cao_rammaha_titi99}, we have
(with $p=\max\{2-s,7/4\} = 7/4$, since $s \geq 1/4$),
\be\label{Buu_est}
|( \Aphalfea \B(\vu,\vu),\vu_{\A})| 
  \le C \| \vu_{\A}\|^{3-p} 
        \| \A^{1/2} \vu_{\A}\|^p.
\ee
Applying  $\psi' \leq 1$, the Cauchy-Schwarz inequality, \eqref{Buu_est} 
%\cite[Lemma 2]{cao_rammaha_titi99}, 
and Young's inequality ($ab \le a^q/q + b^{q'}/q'$ with $1/q+1/q'=1$)  
with $q=2$ and $q=2/(2-p)$
in \eqref{main_ineq}, we get 
%(with $p=\max\{2-s,7/4\} = 7/4$, since $s \geq 1/4$),
\beas
%\frac{1}{2} \frac{d}{ds} \| \Aphalfe \vu \|^2
%+ \nu \cos\theta \| \Apone\vu \|^2
& & \frac{1}{2} \frac{d}{dr} \| \vu_{\A} \|^2
+ \nu \cos\theta \|\A^{1/2} \vu_{\A}  \|^2 \\
&\le& \cos\theta \| \A^{1/2} \vu_{\A} \| \| \vu_{\A}  \| 
 + C \|\vu_{\A}   \|^{3-p} \|\A^{1/2} \vu_{\A}  \|^p 
 + \|\Apea \vf\| \| \A^{1/2} \vu_{\A} \| \\
&\le& \frac{\nu\cos\theta}{4} \| \A^{1/2} \vu_{\A} \|^2 +
       \frac{\cos\theta}{\nu} \|\vu_{\A} \|^2 \\
&&+ C^{\frac{2}{2-p}} \frac{(2-p)}{p}
    \left(\frac{p}{\nu\cos\theta}\right)^{\frac{p}{2-p}} 
 \| \vu_{\A}  \|^{\frac{2(3-p)}{2-p}} + 
  \frac{\nu\cos\theta}{2} \|\A^{1/2} \vu_{\A}  \|^2\\
&&+\frac{1}{\nu\cos\theta}\|\Apea \vf\|^2+
 \frac{\nu\cos\theta}{4} \| \A^{1/2} \vu_{\A} \|^2.
\eeas
Therefore,
\bea 
 \frac{d}{dr} \|\vu_{\A}    \|^2
&\le& \frac{2 \cos\theta}{\nu} \|\vu_{\A} \|^2 
 + C \left(\frac{1}{\nu\cos\theta}\right)^{\frac{p}{2-p}} 
 \| \vu_{\A} \|^{\frac{2(3-p)}{2-p}}
 + \frac{2}{\nu\cos\theta} \| \Apea \vf \|^2 \nonumber \\
&\le& \frac{2 \cos\theta}{\nu} (1+\|\vu_{\A} \|^2) +
      C \left(\frac{1}{\nu\cos\theta}\right)^{\frac{p}{2-p}}
       (1+\| \vu_{\A} \|^2)^{\frac{3-p}{2-p}} + \nonumber\\ 
& &     \frac{2}{\nu\cos\theta}\|\Apea \vf\|^2 .
       \lb{eq:ddr_bd}
\eea
Using $|\theta| \le \pi/4$,  
and $\frac{3-p}{2-p}= 5$,  
%and  $\left\|\vu_{\A}(\zeta) \right\| > 1$,
%\[
%K = \sup \left\{ \| \Apet \vf(\tilde{r} \tilde{\theta}) \|^2 : 0 \le \tilde{r} \le T,  
%|\tilde{\theta}| \leq \pi/4 \right\},
%\]
in \eqref{eq:ddr_bd}, we get 
\be \lb{eq:ddr_bd1}
 \frac{d}{dr} \|\vu_{\A} \|^2
\le C (1+\|\vu_{\A}\|^{2})^5 + \frac{2\sqrt{2}}{\nu} K.
\ee
With $|\theta| \le \pi/4$ fixed, let
\[
  %y(r) = 1 + \|\Aphalfe \vu(r e^{i\theta})\|^2,
  y(r) = 1 + \|\vu_{\A}\|^2.
\]
Then on using \eqref{eq:ddr_bd1} and $y\ge 1$ we obtain 
\[
  \frac{d}{dr} y \le C y^{5},
\]
and hence on integrating the inequality we find that
\[
  y(r) \le 2 y(0) 
  %y(r)=1+\|\Aphalfe \vu(r e^{i\theta}) \|^2 \le 2 y(0) 
   %     = 2+2\|\A^{s+1/2} \vu(0)\|^2,
\]
provided that 
\[
  0 \le r \le \frac{15}{64C} \frac{1}{(y(0))^4} 
       = \frac{15}{64C} \frac{1}{ (1+\|\A^{s+1/2} \vu(0)\|^2)^4 }.
\]
By setting
\[
  T^{**}:= \frac{15}{64C} \frac{1}{ (1+\|\A^{s+1/2}\vu(0)\|^2)^4 },\quad 
  M_1 = 1+2\|\A^{s+1/2}\vu(0)\|^2,
\]
%we deduce that for $\zeta = r e^{i\theta} \in \T$, 
%\begin{equation}\label{eq:uT**-bdd}
% \|\Aphalfe \vu(r e^{i\theta}) \|^2 \le 1 + 2\|\A^{s+1/2} \vu(0)\|^2, 
%\qquad 0 \leq r \le T^{**}.
%\end{equation} 
we deduce that 
\eqref{bounded_soln-complex-local} holds for $0 \leq  r \le T^{**}$. 
\end{proof}
We can extend the bound \eqref{bounded_soln-complex-local} to a
larger domain which contains the interval $[0,T]$ using the
following property of the NSE solution on the sphere \cite{Titi2009}:
\be\label{ass:solutionboundedT}
  \| \A^{s+1/2} \vu(\cdot,t) \| \le M_2 \mbox{ for all } t \in [0,T],
\ee
where the constant $M_2$ depends only on $\|\A^{s+1/2}\vu_0\|$,
$\sup_{0\le t \le T} \|\A^{s} \vf(\cdot,t)\|$ and $\nu$ but not on $T$.
%%%%%%%%%%%%%%%%%%%%%%%%%%%%%%%%%%%%%%%%%%%%%%%%%%%%%%%%%%%%%%%%%%%
% THEOREM 2.3
\begin{theorem}\label{complex_estT}
Suppose $\vu_0$ and $\vf$ satisfy all the conditions in 
the domain $\T$ as in Theorem \ref{complex_est}.  
%If the solution satisfies 
%\be\label{ass:solutionboundedT}
%  \| \A^{s+1/2} \vu(\cdot,t) \| \le M \mbox{ for all } t \in [0,T]
%\ee
Then 
\be\label{bounded_soln-complex}
 \| \A^{s+1/2} e^{\psi(r\cos\theta)\A^{1/2}} \vu(\zeta) \|^2
    \le M_3,  \qquad \zeta \in \T \mbox{ and } 
    |\mbox{Im }\zeta| \le T^{**}/\sqrt{2},
\ee
where $M_3:=1+2M_2^2$, $T^{**}$ depends on
$M_2$, and hence $\vu(\cdot,\zeta) \in G^{s+1/2}_{\psi(r\cos\theta)}$
for all $\zeta \in \T$ with $|\mbox{Im }\zeta| \le T^{**}/\sqrt{2}$.
\end{theorem}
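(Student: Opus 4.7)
The plan is to bootstrap the local-in-$\zeta$ Gevrey bound of Theorem~\ref{complex_est} into one that holds throughout the slab $\{\zeta\in\T:|\mbox{Im}\,\zeta|\le T^{**}/\sqrt{2}\}$ by re-starting the complex-time NSE from every real instant $t_0\in[0,T]$, using the time-uniform $H^{s+1/2}$-bound \eqref{ass:solutionboundedT} in place of the norm of the initial datum.

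First I would fix once and for all the radius $T^{**}$ that the proof of Theorem~\ref{complex_est} produces when $\|\A^{s+1/2}\vu(0)\|$ is replaced by the uniform constant $M_2$; inspecting that proof yields
\[
T^{**}\;=\;\frac{15}{64C}\,\frac{1}{(1+M_2^2)^4},
\]
so $T^{**}$ depends only on $M_2$ and is independent of the starting time. Applied to the NSE~\eqref{NSE_complex} started at any $t_0\in[0,T]$ with initial datum $\vu(t_0)$ (whose $\A^{s+1/2}$-norm is at most $M_2$ by \eqref{ass:solutionboundedT}), Theorem~\ref{complex_est} then yields
\[
\|\A^{s+1/2}e^{\psi(\mbox{Re}(\zeta-t_0))\A^{1/2}}\vu(\zeta)\|^2\le 1+2M_2^2\;=:\;M_3
\]
for every $\zeta$ such that $\zeta-t_0\in\T$ and $|\zeta-t_0|\le T^{**}$.

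Next, for a generic $\zeta=re^{i\theta}\in\T$ satisfying $|\mbox{Im}\,\zeta|\le T^{**}/\sqrt{2}$, I would choose
\[
t_0\;:=\;\mbox{Re}\,\zeta-|\mbox{Im}\,\zeta|\;=\;r(\cos\theta-|\sin\theta|),
\]
which lies in $[0,T]$ because $|\theta|\le\pi/4$ forces $\cos\theta\ge|\sin\theta|$. The shifted point $\tilde\zeta:=\zeta-t_0=|\mbox{Im}\,\zeta|\,(1\pm i)$ (the sign being that of $\mbox{Im}\,\zeta$) satisfies $|\arg\tilde\zeta|=\pi/4$, so $\tilde\zeta\in\T$, together with $|\tilde\zeta|^2=2|\mbox{Im}\,\zeta|^2\le(T^{**})^2$. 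Both hypotheses of the preceding step are therefore met, and \eqref{bounded_soln-complex} follows with the uniform constant $M_3$.

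The one delicate step, which I expect to demand the most care, is reconciling the Gevrey weight $\psi(\mbox{Re}(\zeta-t_0))=\psi(|\mbox{Im}\,\zeta|)$ produced by the shifted argument with the weight $\psi(r\cos\theta)$ asserted in the statement. Because $\psi$ is capped at $\min\{T^{**},\sigma_1\}$, and the slab constraint $|\mbox{Im}\,\zeta|\le T^{**}/\sqrt{2}$ is calibrated exactly so that the shift $\tilde\zeta$ stays inside the local cone of Theorem~\ref{complex_est}, both radii yield membership of $\vu(\cdot,\zeta)$ in the Gevrey class $G^{s+1/2}_{\psi(\cdot)}$ with the uniform bound $M_3$ claimed; bookkeeping this identification explicitly — and in particular checking that the admissible set was dimensioned precisely so that $t_0\ge 0$ and $|\tilde\zeta|\le T^{**}$ simultaneously — is the principal obstacle.
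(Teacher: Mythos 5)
Your overall strategy is the paper's: restart the local complex-time estimate of Theorem~\ref{complex_est} from real times, replacing the norm of the initial datum by the time-uniform bound $M_2$ of \eqref{ass:solutionboundedT}, so that cones of radius $T^{**}=T_1(M_2)$ cover the slab. The paper does this with the discrete restart points $kT^{**}$, $k=0,\dots,\lceil T/T^{**}\rceil$; your continuum choice of $t_0$ makes the coverage of $\{\zeta\in\T:\ |\mbox{Im}\,\zeta|\le T^{**}/\sqrt{2}\}$ more transparent, which is a small gain in clarity.

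The gap is exactly the point you flag and then wave away. Restarting at $t_0$ and applying Theorem~\ref{complex_est} to $\vv(\tilde\zeta)=\vu(t_0+\tilde\zeta)$ yields the weight $\psi(\mbox{Re}\,\tilde\zeta)=\psi(\mbox{Re}\,\zeta-t_0)$, and since $\psi$ is nondecreasing and the factor $e^{\psi\A^{1/2}}$ grows with $\psi$, a bound with a smaller weight does \emph{not} imply the asserted bound with the larger weight $\psi(r\cos\theta)=\psi(\mbox{Re}\,\zeta)$. With your choice $t_0=\mbox{Re}\,\zeta-|\mbox{Im}\,\zeta|$ the obtained weight is $\psi(|\mbox{Im}\,\zeta|)$, which degenerates to $0$ as $\zeta$ approaches the real axis; there your argument reduces to \eqref{ass:solutionboundedT} and yields no Gevrey regularity at all, whereas the theorem asserts the radius $\min\{\mbox{Re}\,\zeta,T^{**},\sigma_1\}$. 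To repair this you should (i) use Theorem~\ref{complex_est} directly, with no restart, when $|\zeta|\le T^{**}$, which already gives the correct weight $\psi(r\cos\theta)$ there, and (ii) otherwise restart from $t_0=\mbox{Re}\,\zeta-\sqrt{(T^{**})^2-(\mbox{Im}\,\zeta)^2}\ge 0$, placing $\zeta$ at the far edge of the restarted cone so that the obtained weight is $\psi\bigl(\sqrt{(T^{**})^2-(\mbox{Im}\,\zeta)^2}\bigr)\ge\psi(T^{**}/\sqrt{2})$, which is uniformly positive; even then some bookkeeping (or a harmless redefinition of $T^{**}$ by a factor of $\sqrt{2}$) is needed to match the weight written in \eqref{bounded_soln-complex}. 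In fairness, the paper's own proof is terse on the same point --- after each shift it simply says the previous arguments apply --- but your specific choice of $t_0$ makes the weight mismatch as severe as possible rather than as mild as possible.
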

\begin{proof}
We proceed as in the proof of Theorem \ref{complex_est} to obtain
the ordinary differential equation
\[
  \frac{d}{dr} y \le C y^5,
\]
where $y(r) = 1+\|\Aphalfe\vu(r e^{i\theta})\|^2$. On integrating
the inequality we find that
\[
  y(r) \le 2 y(0)
\]
provided that
\[
  0 \le r \le \frac{15}{64C} \frac{1}{ (1+\|\A^{s+1/2} \vu(0)\|^2)^4 }.
\]
We define
\[
T_1(\rho):= \frac{15}{64C} \frac{1}{ (1+\rho^2)^4 },\quad \rho \ge 0.
\]
If $T_1(\|\A^{s+1/2}\vu(0)\|) \ge T$ we have finished the proof. Otherwise, 
we let
\[
  T^{**} = T_1(M_2),
\]
where $M_2$ is given in \eqref{ass:solutionboundedT}.
For $\zeta = r e^{i\theta} \in \T$, $0 \leq r \le T^{**}$, 
\begin{equation}\label{eq:uT**-bdd}
 \|\Aphalfe \vu(r e^{i\theta}) \|^2 \le 1 + 2\|\A^{s+1/2} \vu(0)\|^2.
\end{equation} 
Consequently, \eqref{bounded_soln-complex} holds for $0 \leq  r \le T^{**}$
with $M_3 = 1+2\|\A^{s+1/2} \vu(0)\|^2$. 

Next we consider the case $\zeta =  T^{**}+ r e^{i\theta}$, with 
$r \in [0,T^{**}]$. We define, 
for $|\theta| \leq \pi/4$,
\[
\vv(re^{i\theta}) :=  \vu(T^{**}+re^{i\theta}), \qquad r \in [0,T^{**}].
\]
%Using  \eqref{eq:uT**-bdd},
%$\|\A^{s+1/2} \vv(0)\|=  
%\|\A^{s+1/2} e^{\psi(T^{**} \cos\theta) \A^{1/2}} \vu(T^{**} e^{i\theta}) \| 
%\le (1+2\|2\|\A^{s+1/2} \vu(0)\|^2)$.
Using 
%the assumption 
$\|\A^{s+1/2} \vv(0)\| \le M_2$ we can apply 
the previous arguments 
%and estimate for
%$\vv(r e^{i\theta})$ instead of $\vu(r e^{i\theta})$ 
to obtain \eqref{bounded_soln-complex} (with $\vu$ replaced with $\vv$)
for $0\le r\le T^{**}$. 
%In particular, since 
%$\psi((T^{**}+r) \cos \theta) \le
%\psi(T^{**} \cos \theta) + \psi(r \cos \theta )$, we get 
%\[
% \|\A^{s+1/2} e^{\psi((T^{**}+r) \cos\theta) \A^{1/2}}  \vu((T^{**}+r)
% e^{i\theta}) \|^2 \le C \qquad   r \in [0,T^{**}],  \quad |\theta| \le \pi/4.
%\]
We complete the proof and obtain the bound \eqref{bounded_soln-complex} 
by repeating the last argument $n$ times, where $n=\lceil T/T^{**} \rceil$. 
\end{proof}
%%%%%%%%%%%%%%%%%%%%%%%%%%%%%%%%%%%%%%%%%%%%%%%%%%%%%%%%%%%%%%%%%%%%%%%%%%%%
\section{Finite dimensional spaces and Stokes projections}~\label{sec:proj} 
Throughout the remainder of the paper, with $s$ and $\sigma_1$ as in 
Theorem~\ref{reg_thm} and Theorem~\ref{complex_est}, we assume that 
\begin{equation}\label{main_assumption}
%\vu_0 \in \calD(\A^{s+1/2}), \qquad  
%\vf \in L^\infty((0,\infty);\calD(\A^{s} e^{\sigma_1 \A^{1/2}})), 
\vu_0 \in \calD(\A^{s+1/2}), \qquad  
\vf \in C([0,T];\calD(\A^{s+1/2} e^{\sigma_1 \A^{1/2}})), 
\qquad s \geq 1/4. 
\end{equation}

Natural finite dimensional spaces (depending on a parameter $N>0$) in
which to seek approximations to $\vu(t)$ are
\begin{equation} \label{fin_space_vec}
   V_N := \mbox{ span } \{ \Z_{L,m}: L=1,\ldots,N;~~ m=-L,\ldots,L\}.
\end{equation}
The dimension of $V_N$ is $N^2+2N$. Let  $\proj_N:H \rightarrow V_N$ 
be the orthogonal projection with respect to the $L^2(TS)$ inner 
product defined by 
\begin{equation}\label{vec_orth_proj}
   \proj_N(\vv) = \sum_{L=1}^N \sum_{m=-L}^L \widehat{\vv}_{L,m} \Z_{L,m}.
\end{equation}
\begin{lemma}
Let $\alpha>0$ be given. If $\vv \in {\mathcal D}(\A^{\alpha/2})$ then
\begin{equation}\label{app_power_veca}
 \|\vv - \proj_N \vv\| \le  N^{-\al} \|\vv\|_{H^\alpha(TS)}.
\end{equation}
\end{lemma}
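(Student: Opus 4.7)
The plan is to expand $\vv$ in the orthonormal basis $\{\Z_{L,m}\}$ of $H$ and truncate to obtain the tail of the Fourier series, then bound the tail using the eigenvalue scaling of the Stokes operator $\A$.

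First I would use the expansion \eqref{Fourier_coeff}, write
\begin{equation*}
\vv - \proj_N \vv = \sum_{L=N+1}^\infty \sum_{m=-L}^L \widehat{\vv}_{L,m} \Z_{L,m},
\end{equation*}
and invoke orthonormality of $\{\Z_{L,m}\}$ in $H$ to obtain
\begin{equation*}
\|\vv - \proj_N \vv\|^2 = \sum_{L=N+1}^\infty \sum_{m=-L}^L |\widehat{\vv}_{L,m}|^2.
\end{equation*}

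Next I would exploit the eigenvalue relation $\lambda_L = L(L+1)$ from \eqref{stok_eig}. Since $L \geq N+1$ in the tail, one has $\lambda_L \geq (N+1)(N+2) > N^2$, and hence $1 \leq \lambda_L^\alpha N^{-2\alpha}$ for every $\alpha > 0$. Inserting this factor into the sum,
\begin{equation*}
\|\vv - \proj_N \vv\|^2 \le N^{-2\alpha} \sum_{L=N+1}^\infty \sum_{m=-L}^L \lambda_L^{\alpha} |\widehat{\vv}_{L,m}|^2 \le N^{-2\alpha} \|\vv\|_{H^\alpha(TS)}^2,
\end{equation*}
where the last inequality uses the definition \eqref{Hs-norm} of the $H^\alpha(TS)$ norm and the assumption $\vv \in \calD(\A^{\alpha/2})$ to ensure this sum is finite. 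Taking square roots yields \eqref{app_power_veca}.

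There is no real obstacle here: the argument is the standard Parseval/tail bound for spectral projections, and the only point to check is that the eigenvalue scaling $\lambda_L \geq N^2$ for $L \geq N+1$ gives exactly the exponent $N^{-\alpha}$ (not $N^{-2\alpha}$ inside the norm, since the Sobolev norm already absorbs the factor $\lambda_L^{\alpha}$). The lemma is essentially a direct consequence of the spectral characterization of $\calD(\A^{\alpha/2})$ given in \eqref{Gev0_space}--\eqref{Hs-norm}.
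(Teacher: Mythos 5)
Your proof is correct and follows essentially the same route as the paper: Parseval's identity for the tail of the expansion in the orthonormal basis $\{\Z_{L,m}\}$, the bound $\lambda_L \ge (N+1)(N+2) > N^2$ for $L \ge N+1$, and the definition \eqref{Hs-norm} of the $H^\alpha(TS)$ norm. No issues.
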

\begin{proof}
Using (\ref{stok_eig}), (\ref{Fourier_coeff}), and 
(\ref{Hs-norm})   we get
\begin{align*}
 \|\vv - \proj_N \vv\|^2=
   \sum_{L=N+1}^\infty\sum_{m=-L}^L |\widehat{\vv}_{L,m}|^2
& \le 
 N^{-2\alpha}
 \sum_{L=N+1}^\infty\sum_{m=-L}^L\lambda^{\alpha}_L |\widehat{\vv}_{L,m}|^2 \\
& \le N^{-2\alpha}\| \vv \|^2_{H^\alpha(TS)}.
\end{align*}
\end{proof}
In particular, using \eqref{main_assumption}, we get
\begin{equation}\label{app_power_vec}
 \|\vf - \proj_N \vf\| \le  N^{-(2s+1)} \|\vf\|_{H^{2s+1}(TS)}, \qquad t \in [0,T].
\end{equation}

For computer implementation,
the Fourier coefficients in (\ref{vec_orth_proj}) and all Galerkin
type integrals in computational schemes for the NSE need to be 
approximated by cubature rules on the sphere, leading to 
a pseudospectral method. To this
end,  for a continuous scalar field $\psi$ on $S$, we consider
a  Gauss-rectangle quadrature sum $Q_M(\psi)$ 
with quadrature points $\{\hxi_{p,q} = \p(\theta_p,\phi_q)\}$ 
and  positive weights ${w_p}$ of the form 
\be\label{quadrule}
   Q_M (\psi) := \frac{2\pi}{M}\sum_{q=1}^M \sum_{p=1}^{M/2} w_p \psi(\hxi_{p,q}) =  
\frac{2\pi}{M}\sum_{q=1}^M \sum_{p=1}^{M/2} w_p \psi(\theta_p, \phi_q),
\ee
where $M \ge 2$ is an even integer, $w_p$ and $\cos\theta_p$
for $\quad p=1,\ldots,M/2$ are 
the Gauss-Legendre weights and nodes on $[-1,1]$ and 
$\phi_q=2q\pi/M$, $q=1,\ldots,M$. 
The rule (\ref{quadrule})  is
exact when $\psi$ is a polynomial of degree $M-1$ on $S$, that is,
\[
Q_M \psi = \int_{S} \psi\; dS, \qquad \psi \in \calP_{M-1}.
\]
Hence corresponding to (\ref{scal_ip}) and (\ref{vec_ip}), we define discrete
inner products for scalar and vector fields on the unit sphere  as
\be \label{disc_ip}
(v_1,\;v_2)_M = Q_M(v_1v_2), \qquad \qquad (\vv_1,\;\vv_2)_M = Q_M(\vv_1 \cdot \vv_2).
\ee
The choice of $M$ is very important; we choose $M$ such that all Galerkin integrals
with polynomial terms in our scheme are evaluated exactly. In particular, with
the unknown tangential divergence-free velocity field sought in the polynomial 
space $V_N$, and knowing that the NSE nonlinearity is 
quadratic, we choose $M$ such that  
\be \label{M_choice}
  (\B(\vv,\vw), \ovz) = (\B(\vv,\vw),\ovz)_{M},
 \qquad  \vv, \vw, \ovz \in V_N. 
\ee
This holds, for example, if  $M = 3N+2$. We define a computable
counterpart of (\ref{vec_orth_proj}), using 
$\dproj_N: H \cap C(TS) \rightarrow V_N$,
a discrete orthogonal projection with respect to the $M^2/2$ point 
discrete inner  product, as 
\begin{equation}\label{disc_vec_orth_proj}
   \dproj_N(\vv) = \sum_{L=1}^N \sum_{m=-L}^L \widehat{\vv}_{L,m,M} \Z_{L,m},
   \qquad \widehat{\vv}_{L,m,M} = Q_M( \vv \cdot \overline{\Z_{L,m}}) = (\vv,\;\Z_{L,m})_M .
\end{equation}
With $M$ chosen to satisfy (\ref{M_choice}), it is easy to see that
\begin{equation} \label{disc_proj_prop1}
 \dproj_N(\vv) = \proj_N(\vv) = \vv,  \qquad \vv \in V_N, 
\end{equation}
%and for $\vv \in C^{\infty,0}(TS;\Curl)\cap C(TS)$ and
and for $\vv \in H\cap C(TS)$ and
%$\vw \in C^{\infty,0}(TS;\Curl)\cap C^k(TS)$,
$\vw \in H\cap C^k(TS)$,
\begin{equation} \label{disc_proj_prop2a}
\| \dproj_N(\vv)\| \leq C \|\vv\|_\infty, 
\qquad \qquad
\| \vw - \dproj_N(\vw)\| \leq C N^{-k} \|\vw\|_{C^k(TS)},
\end{equation}
where the last two inequalities follow from simple arguments
used in Theorem 13 and Lemma 14 of \cite{vec_hyp_pap}. In particular,
since  $\calD(\A^{s+1/2}) \subset  H\cap C^{2s}(TS)$, for an integer $2s$,
using \eqref{main_assumption}  
\begin{equation} \label{disc_proj_prop2}
\| \vf - \dproj_N(\vf)\| \leq C N^{-2s} \|\vf\|_{C^{2s}(TS)}, \qquad t \in [0,T].
\end{equation}

Next we consider the Stokes projection 
in $V_N$ of the exact unique regular solution 
$\vu(t) := \vu(.,t)$ of (\ref{NSE_sphere}). 
%that satisfies (\ref{est_Aeu_t1}). 
For each fixed $t$, the Stokes projection $\widetilde{\vu}_N \in V_N$ of $\vu$
is defined by
\be \label{elliptic1}
  (\A\widetilde{\vu}_N,\vv) = (\A\vu,\vv), \qquad \vv \in V_N.
\ee
Since $\proj_N \A = \A \proj_N$, it follows that 
$\widetilde{\vu}_N = \proj_N (\vu)$.
Following standard techniques in finite element analysis, 
the Stokes projection of $\vu$ plays an important role as a comparison  
function in the main analysis in the next section.
%To this end, we first estimate the Stokes projection error
%$\|\vu- \tilde{\vu}_N\|$ through the observation that
%\be\label{elliptic2}
%  (\A\vu- \A\widetilde{\vu}_N, \Z_{L,m}) = (\A\vu,\Z_{L,m}), \qquad  L>N;|m|\le L. 
%\ee
%Therefore, in this case, we have $\tilde{\vu}_N = \proj_N(\vu)$.
% THEOREM 3.1
\begin{theorem}\label{cont_est}
Let $\vu_0$ and $\vf$ satisfy \eqref{main_assumption}.  
%the solution $\vu$ satisfies \eqref{ass:solutionboundedT}.
Then  
\begin{equation} \label{comp_fn_bound}
  \|\vu-\tilde{\vu}_N\| 
  \le   C  \lambda^{-s-1/2}_{N+1} e^{-\sigma(t) \lambda^{1/2}_{N+1}}
  \le   C  N^{-2s-1} e^{-\sigma(t) N}, \qquad  t \in [0, T],
\end{equation}
where $\sigma(t)$  is as in Theorem \ref{reg_thm}.
\end{theorem}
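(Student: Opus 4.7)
The plan is to exploit the Gevrey regularity $\vu(\cdot,t) \in G^{s+1/2}_{\sigma(t)}$ guaranteed by Theorem~\ref{reg_thm} together with the uniform bound \eqref{bounded_soln}, and read off the approximation rate from the exponential decay of the Gevrey Fourier coefficients of $\vu$.

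First I would use the identification $\tilde{\vu}_N = \proj_N \vu$ that follows immediately from the commutation $\proj_N \A = \A \proj_N$ on the eigenbasis (already noted just before the statement). By Parseval's identity in the orthonormal basis $\{\Z_{L,m}\}$,
\begin{equation*}
\|\vu - \tilde{\vu}_N\|^2 = \sum_{L=N+1}^{\infty} \sum_{m=-L}^{L} |\widehat{\vu}_{L,m}|^2.
\end{equation*}

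Next, I would insert and extract the Gevrey weights $\lambda_L^{2s+1} e^{2\sigma(t)\lambda_L^{1/2}}$. Since this weight is monotone increasing in $L$, its reciprocal is maximized on the tail at $L=N+1$, so
\begin{equation*}
\|\vu - \tilde{\vu}_N\|^2 \le \lambda_{N+1}^{-(2s+1)} e^{-2\sigma(t)\lambda_{N+1}^{1/2}} \sum_{L=N+1}^{\infty} \sum_{m=-L}^{L} \lambda_L^{2s+1} e^{2\sigma(t)\lambda_L^{1/2}} |\widehat{\vu}_{L,m}|^2.
\end{equation*}
The remaining sum is bounded by $\|\A^{s+1/2} e^{\sigma(t)\A^{1/2}}\vu(t)\|^2$, which by \eqref{bounded_soln} (applied under the hypotheses \eqref{main_assumption} that feed Theorem~\ref{reg_thm}) is bounded uniformly by $M_0$ for all $t \in [0,T]$. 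Taking square roots yields the first inequality in \eqref{comp_fn_bound}.

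Finally, I would convert the bound in the $\A$-eigenvalues into the advertised bound in $N$ using $\lambda_{N+1}=(N+1)(N+2)\ge N^2$, which gives both $\lambda_{N+1}^{-s-1/2}\le N^{-2s-1}$ and $\lambda_{N+1}^{1/2}\ge N$, hence $e^{-\sigma(t)\lambda_{N+1}^{1/2}}\le e^{-\sigma(t)N}$. This produces the second inequality in \eqref{comp_fn_bound}. There is no real obstacle here once Theorem~\ref{reg_thm} and \eqref{bounded_soln} are in hand; the only thing to keep track of is that $\sigma(t)>0$ for $t>0$ even though $\sigma(0)=0$, so the exponential factor genuinely accelerates the decay away from $t=0$, while the polynomial factor alone suffices uniformly on $[0,T]$.
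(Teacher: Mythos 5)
Your proposal is correct and follows essentially the same route as the paper's own proof: identify $\tilde{\vu}_N$ with $\proj_N\vu$, apply Parseval in the basis $\{\Z_{L,m}\}$, extract the monotone Gevrey weight $\lambda_L^{2s+1}e^{2\sigma(t)\lambda_L^{1/2}}$ at $L=N+1$, bound the remaining sum by $\|\A^{s+1/2}e^{\sigma(t)\A^{1/2}}\vu(t)\|^2 \le M_0$ via \eqref{bounded_soln}, and finish with $N^2 \le \lambda_{N+1}=(N+1)(N+2)$. No discrepancies to report.
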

\begin{proof}
%%EASY AND SHORT PROOF
Using the fact that $\tilde{\vu}_N = \proj_N \vu$, we have
\beas
\|\vu - \tilde{\vu}_N \|^2 
&=& \sum_{L>N} \sum_{|m| \le L} |\widehat{\vu}_{L,m}|^2 \\
&\le& \lambda^{-2s-1}_{N+1} e^{-2\sigma(t)\lambda^{1/2}_{N+1}}
     \sum_{L>N} \sum_{|m|\le L} \lambda^{2s+1}_L 
       e^{2\sigma(t) \lambda^{1/2}_L } |\widehat{\vu}_{L,m}|^2 \\
&\le& \lambda^{-2s-1}_{N+1} e^{-2\sigma(t)\lambda^{1/2}_{N+1}}
      \| \A^{s+1/2} e^{\sigma(t)\A^{1/2}} \vu(t) \|^2 \\ 
&\le& C \lambda^{-2s-1}_{N+1} e^{-2\sigma(t)\lambda^{1/2}_{N+1}}
      % (1+2\|\A^{s+1/2} \vu_0 \|^2),
       % (1+2M^2),
\eeas
where in the last step we used \eqref{bounded_soln}. The 
last inequality in (\ref{comp_fn_bound}) follows from 
the fact that $N^2 \leq \lambda_{N+1} = (N+1)(N+2)$. 
\end{proof}
%%%%%%%%%%%%%%Derivative continuous estimate%%%%%
% THEOREM 3.2
\begin{theorem}\label{der_cont_est}
Let $\vu_0$, $\vf$ satisfy \eqref{main_assumption}. 
%and the solution $\vu$ satisfies \eqref{ass:solutionboundedT}. 
We assume further that $\vf$ is analytic 
in $\T$ and \eqref{supf_in_T} holds.
%\[
%\sup \{\|\Ape \vf(\cdot,\zeta)\|^2 :\zeta=re^{i\theta} \in \T\} < \infty,
%\]
%where $\T$ is as in Theorem~\ref{complex_estT}. 
Then for $t \in (0, T)$, 
\begin{equation} \label{der_comp_fn_bound}
  \left\|\frac{d}{dt}\left(\vu-\tilde{\vu}_N\right)\right \| \le 
C  \lambda^{-s-1/2}_{N+1} e^{-\psi_1(t) \lambda^{1/2}_{N+1}} \le
C  N^{-2s-1} e^{-\psi_1(t) N},  
%\qquad  t \in (0, T), 
\end{equation}
where $\psi_1(t) = \min\{(1-1/\sqrt{2})t,T^{**},\sigma_1\}$, and $T^{**}$   
is as in Theorem~\ref{complex_estT}.
%Theorem~\ref{cont_est} 
\end{theorem}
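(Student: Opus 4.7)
The plan is to exploit the fact that $\tilde{\vu}_N = \proj_N \vu$ and that $\proj_N$ commutes with $d/dt$, so that
\[
\frac{d}{dt}\bigl(\vu - \tilde{\vu}_N\bigr) = (I - \proj_N)\frac{d\vu}{dt},
\]
reducing the claim to a Gevrey-class bound on $d\vu/dt$; that bound will in turn be extracted from the complex-time regularity of Theorem~\ref{complex_estT} via Cauchy's integral formula for the derivative. More precisely, arguing exactly as in the proof of Theorem~\ref{cont_est} with $\vu(t)$ replaced by $\tfrac{d\vu}{dt}(t)$ and $\sigma(t)$ replaced by $\psi_1(t)$, it suffices to establish the Gevrey-type bound
\[
\bigl\|\A^{s+1/2} e^{\psi_1(t)\A^{1/2}} \tfrac{d\vu}{dt}(t)\bigr\| \le C, \qquad t \in (0,T).
\]

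To obtain this I would use the analyticity of $\zeta \mapsto \vu(\zeta)$ in the sector $\T$ supplied by Theorem~\ref{complex_estT}, together with Cauchy's formula
\[
\frac{d\vu}{dt}(t) = \frac{1}{2\pi i}\oint_{\Gamma_t} \frac{\vu(\zeta)}{(\zeta-t)^2}\,d\zeta,
\]
where $\Gamma_t$ is a circle of radius $\rho$ centred at $t$ lying in the analyticity domain. The key geometric choice is $\rho := \min\{t,T^{**}\}/\sqrt{2}$: indeed $t/\sqrt{2}$ is the Euclidean distance from $t$ on the positive real axis to the bounding rays $\arg\zeta = \pm\pi/4$, so the closed disk $\overline{D(t,\rho)}$ sits inside the closed wedge $\{|\arg\zeta|\le\pi/4\}$; moreover $|\mathrm{Im}\,\zeta|\le\rho\le T^{**}/\sqrt{2}$ on $\Gamma_t$, so the complex-time bound \eqref{bounded_soln-complex} applies uniformly on $\Gamma_t$. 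Writing $\zeta = re^{i\theta}$, the minimum of $r\cos\theta = \mathrm{Re}\,\zeta$ on $\Gamma_t$ equals $t-\rho$, which equals $(1-1/\sqrt{2})t$ when $t\le T^{**}$ and is otherwise capped by the $T^{**}$ entry of $\min$ in $\psi_1$; either way the definition of $\psi_1$ gives $\psi_1(t)\le\psi(r\cos\theta)$ throughout $\Gamma_t$.

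Applying the spectrally defined self-adjoint operator $\A^{s+1/2} e^{\psi_1(t)\A^{1/2}}$ under the integral sign, the pointwise spectral inequality $e^{\psi_1(t)\lambda^{1/2}} \le e^{\psi(r\cos\theta)\lambda^{1/2}}$ combined with \eqref{bounded_soln-complex} yields $\bigl\|\A^{s+1/2}e^{\psi_1(t)\A^{1/2}}\vu(\zeta)\bigr\| \le M_3^{1/2}$ on $\Gamma_t$, and estimating the contour integral by arclength $\times$ supremum produces
\[
\bigl\|\A^{s+1/2} e^{\psi_1(t)\A^{1/2}} \tfrac{d\vu}{dt}(t)\bigr\| \le \frac{M_3^{1/2}}{\rho}.
\]
The Parseval-type tail estimate at the start of the proof of Theorem~\ref{cont_est} then delivers \eqref{der_comp_fn_bound}. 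The principal obstacle is the geometric step: verifying that a circle of radius $t/\sqrt{2}$ about $t$ fits simultaneously inside the wedge of analyticity $|\arg\zeta|\le\pi/4$ and inside the horizontal strip $|\mathrm{Im}\,\zeta|\le T^{**}/\sqrt{2}$ of Theorem~\ref{complex_estT}; this is precisely what forces the factor $1-1/\sqrt{2}$ in the definition of $\psi_1(t)$. A minor care near $t = T$ (where the right half of the disk may threaten to exit $\T$) can be handled either by shrinking $\rho$ slightly or by invoking the analyticity of $\vu$ in a small neighbourhood of $\T$, without affecting the estimate.
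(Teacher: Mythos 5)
Your proposal is correct and follows essentially the same route as the paper: both rest on the complex-time Gevrey bound of Theorem~\ref{complex_estT}, Cauchy's integral formula on a circle of radius $\min\{t/\sqrt{2},T^{**}/\sqrt{2},T-t\}$ about $t$, the geometric observation that $\mathrm{Re}\,\zeta\ge(1-1/\sqrt{2})t$ on that circle (whence $\psi(r\cos\theta)\ge\psi_1(t)$), and the Parseval tail estimate from Theorem~\ref{cont_est}. The only difference is that you differentiate $\vu$ first and project afterwards, while the paper applies Cauchy's formula directly to the already-projected error $\vp_N(\zeta)=(I-\proj_N)\vu(\zeta)$; since $\proj_N$ commutes with both $d/dt$ and the contour integral, the two orderings are interchangeable.
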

\begin{proof}
Let $t \in (0, T)$ be fixed. Let 
$\vp_N(\zeta) = (\vu - \tilde{\vu}_N)(\zeta)$ be the standard 
complexification
of $\vu - \tilde{\vu}_N$ at $\zeta = r e^{i\theta}$.   
Using Theorem~~\ref{complex_estT} 
and the Cauchy integral formula,
\[
 \frac{d\vp_N(t)}{dt}=
 \frac{1}{2\pi i } \int_{\Gamma} \frac{\vp_N(\zeta)}{(t-\zeta)^2} d\zeta,
 \]
where for $t>0$, $\Gamma$ is a circle in the $\zeta$ plane
with center $(t,0)$ and 
radius\\
$\min\{t/\sqrt{2},T^{**}/\sqrt{2},T-t\}$, 
%$\psi_1(t)  <  \min\{t, T^{**}, \sigma_1\}$ (where 
%$T^{**}$ as in is as in Theorem~\ref{complex_est}) 
a condition that ensures that $\zeta=r e^{i\theta} \in \Gamma$ 
lies in the region $\T$ with 
$|\mbox{Im }\zeta| \le T^{**}/\sqrt{2}$. 
%defined in  Theorem~\ref{complex_est}.
Using the fact that $\tilde{\vu}_N = \proj_N \vu$, 
for $\zeta = r e^{i\theta} \in \Gamma$ we have, 
%with $r = \psi_1(t)$ and $\psi$ as in
from Theorem~\ref{complex_estT}, 
%and hence $\psi(r\cos \theta) = \psi_1(t)$
%and applying the estimate (\ref{bounded_soln-complex}),
%we obtain
\bea
\|\vp_N(\zeta)\|&=& \|\vu(\zeta) - \tilde{\vu}_N(\zeta) \|^2 \nonumber \\ 
& = &  \sum_{L>N} \sum_{|m| \le L} |\widehat{\vu}_{L,m}|^2 \nonumber\\
&\le& \lambda^{-2s-1}_{N+1} e^{-2\psi(r\cos \theta)\lambda^{1/2}_{N+1}}
   \sum_{L>N} \sum_{|m|\le L} \lambda^{2s+1}_L 
   e^{2\psi(r\cos \theta) \lambda^{1/2}_L } |\widehat{\vu}_{L,m}|^2 \nonumber \\
&\le&  \lambda^{-2s-1}_{N+1} e^{-2\psi(r\cos \theta)\lambda^{1/2}_{N+1}}
      \| \A^{s+1/2} e^{\psi(r \cos \theta)\A^{1/2}} \vu(\zeta) \|^2 \nonumber \\ 
&\le& C \lambda^{-2s-1}_{N+1} e^{-2\psi(r\cos \theta)\lambda^{1/2}_{N+1}}
         \nonumber\\ 
%&&=  C \lambda^{-(2s+1)}_{N+1} e^{-2\psi_1(t) \lambda^{1/2}_{N+1}}
&\le&  C N^{-2(2s+1)}  e^{-2\psi(r\cos\theta) N}.
\label{eq:pn-comp-est}
\eea
For $\zeta = r e^{i\theta} \in \Gamma$ it is easily seen that
$r \cos\theta \ge (1-1/\sqrt{2})t$, and hence that
\be\lb{psi1_def}
 \psi(r\cos\theta) \ge \min \{(1-1/\sqrt{2})t,T^{**},\sigma_1\} =: \psi_1(t).
\ee
On using (\ref{eq:pn-comp-est}) in  (\ref{comp_fn_bound}) we get 
 \[
  \left\|\frac{d}{dt}\left(\vu-\tilde{\vu}_N\right)(t)\right\| \le 
 \frac{1}{2\pi}\int_{\Gamma} \frac{\|\vp_N(\zeta)\|}{|t-\zeta|^2} d\zeta \le 
C  N^{-2s-1} e^{-\psi_1(t) N}.
\]
\end{proof}
%%%%%%%%%%%%%%%%%%%%%%%%%%%%%%%%%%%%%%%%%%%%%%%%%%%%%%%%%%%%%%%%%%%%%%%%%%%
\section{A  pseudospectral quadrature  method}
We are now ready to describe, analyze, and implement a spectrally
accurate scheme to compute approximate solutions of the NSE (\ref{NSE_sphere})
in $V_N$, through its weak formulation (\ref{weak_form}). The
task is then to compute 
$\vu_N(\cdot,t) \in V_N$ for $t \in [0,T]$ with 
$\vu_N(0,\xh)= \dproj_N\vu_0(\xh)$, $\xh \in S$, satisfying  
the (spatially)  discrete system of ordinary differential 
equations 
\begin{equation} \label{app_weak_form}
  \frac{d}{dt}\left(\vu_N,\vv\right)_M + b(\vu_N,\vu_N,\vv)_M 
+ \nu(\Curln \vu_N, \Curln \vv)_M 
+ (\CC\vu_N, \vv)_M  = (\vf,\vv)_M,
\end{equation}
for all $\vv \in V_N$ and prove that the scheme is spectrally accurate 
with respect to 
the parameter $N$ (that is, converges with  rate determined by 
the smoothness of the given data), and demonstrate the theory 
with numerical experiments.

Using (\ref{fin_space_vec}), 
the exactness properties of the discrete inner product,  (\ref{M_choice}),
(\ref{ip_identities1}), (\ref{A}),  (\ref{C}), and (\ref{B}), 
the system (\ref{app_weak_form}) can be written as
\bea 
%  \frac{d}{dt}\left(\vu_N,\Z_{L,m}\right) + \nu(\A \vu_N,\Z_{L,m}) + 
%\left(\B(\vu_N,\vu_N),\Z_{L,m}\right) +  (\CC\vu_N, \Z_{L,m})  
\left( 
\frac{d}{dt} \vu_N + \nu\A \vu_N + \B(\vu_N,\vu_N)+\CC\vu_N,\Z_{L,m}
\right)
&=& (\vf,\Z_{L,m})_M, \nonumber \\ 
& & \hspace{-1.7in} L=1,\cdots,N;~~ m=-L,\cdots,L. \label{app1_weak_form}
\eea
%%We now substitute (\ref{app_soln_rep}) in (\ref{app1_weak_form}), and 
%%write the resulting system of ordinary differential equations as
%%\begin{equation}\label{ode}
%%\frac{d}{dt}\boldsymbol{\alpha}(t) = \mathbf{F}(t,\boldsymbol{\alpha}(t)).
%%\end{equation}

%%%%%%%%%%%%%%%%%%%%%%%%%%%%%%%%%%%%%%%%%%%%%%%%%%%%%%%%%%%%%%%%%%%%
%%%%%%%%%%%%%%%%%%%%%%%%%%%%%Stability estimates%%%%%%%%%%%%%%%%%%%%
%%%%%%%%%%%%%%%%%%%%%%%%%%%%%%%%%%%%%%%%%%%%%%%%%%%%%%%%%%%%%%%%%%%%
\subsection{Stability and convergence analysis}\label{sec:anal}
First we establish the stability of the approximate 
solution $\vu_N$ of~\eqref{app_soln_rep}. That is, similar 
to~\cite[Proposition~9.1]{constantin_foias},  
we prove that $\max_{t \in  [0, T]} \|\vu_N\|_V$ is uniformly bounded 
with the bound depending only on the initial data, forcing
function, and the viscosity term in~\eqref{app_soln_rep}.

\begin{theorem}\label{stability_thm}
Let  $\vu_0$ and $\vf$ satisfy (\ref{main_assumption}).
Let $N\ge 1$ be an integer. 
Let $\vu_N$ be the solution of the pseudospectral quadrature
Galerkin system~\eqref{app_weak_form}. Then there
exists a constant $C$ depending on $\nu, \|\vu_0\|_V$
and $\|\vf\|_{\infty} := \max_{t \in [0, T] } \|\vf(t)\|_{C(TS)}$ so that
\[
  \max_{t \in [0, T]} \|\vu_N\|_V \le C.
\]
\end{theorem}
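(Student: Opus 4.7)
The plan is to adapt the classical two-dimensional energy-enstrophy bootstrap for the Navier-Stokes equations to the present pseudospectral quadrature Galerkin setting. The key observation is that the exactness property \eqref{M_choice} makes the discrete inner products agree with the continuous ones on $V_N \times V_N$, so the skew-symmetry \eqref{skew} of the trilinear form and the pointwise orthogonality $(\CC\vv)(\xh)\cdot \vv(\xh)=0$ survive the quadrature.

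\textbf{Step 1 (energy bound in $H$).} Taking $\vv = \Z_{L,m}$ in \eqref{app1_weak_form}, multiplying by the $(L,m)$ Fourier coefficient of $\vu_N$, and summing yields
\[
\tfrac{1}{2}\tfrac{d}{dt}\|\vu_N\|^2 + \nu\|\vu_N\|_V^2 = (\vf,\vu_N)_M,
\]
where the nonlinear term vanishes by $(\B(\vu_N,\vu_N),\vu_N)_M = b(\vu_N,\vu_N,\vu_N) = 0$ (using \eqref{M_choice} and \eqref{skew}) and the Coriolis term vanishes pointwise. Since $(\vf,\vu_N)_M = (\dproj_N\vf,\vu_N)$ and $\|\dproj_N\vf\| \le C\|\vf\|_\infty$ by \eqref{disc_proj_prop2a}, Cauchy-Schwarz, Young's inequality, and Gronwall deliver uniform bounds
\[
\sup_{t\in[0,T]} \|\vu_N(t)\|^2 \le K_1, \qquad \int_0^T \|\vu_N(s)\|_V^2\,ds \le K_2,
\]
with $K_1,K_2$ depending only on $\nu, \|\vu_0\|, \|\vf\|_\infty$ and $T$.

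\textbf{Step 2 (enstrophy bound in $V$).} Testing \eqref{app1_weak_form} with $\A\vu_N \in V_N$ (which is an admissible test function since $\A$ preserves $V_N$) produces
\[
\tfrac{1}{2}\tfrac{d}{dt}\|\vu_N\|_V^2 + \nu\|\A\vu_N\|^2 = -b(\vu_N,\vu_N,\A\vu_N) - (\CC\vu_N,\A\vu_N)_M + (\dproj_N\vf,\A\vu_N).
\]
The crux is the two-dimensional Sobolev/Ladyzhenskaya-type bound on the sphere,
\[
|b(\vu_N,\vu_N,\A\vu_N)| \le C\,\|\vu_N\|^{1/2}\,\|\vu_N\|_V\,\|\A\vu_N\|^{3/2},
\]
which, via Young's inequality, is absorbed into a fraction of $\nu\|\A\vu_N\|^2$ at the cost of $C\nu^{-3}\|\vu_N\|^2\|\vu_N\|_V^4$. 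Handling the Coriolis and forcing terms pointwise (using $|\CC\vu_N(\xh)| \le 2\Omega|\vu_N(\xh)|$) and by Cauchy-Schwarz yields a differential inequality
\[
\tfrac{d}{dt}\|\vu_N\|_V^2 \le g(t)\,\|\vu_N\|_V^2 + h(t),
\]
with $g(t) = C\nu^{-3}\|\vu_N(t)\|^2\|\vu_N(t)\|_V^2$ and $h$ bounded in terms of $\|\vf\|_\infty$, $\Omega$, and $K_1$. Step~1 gives $\int_0^T g(s)\,ds \le C\nu^{-3}K_1K_2$, so Gronwall's lemma produces the desired uniform bound on $\|\vu_N(t)\|_V$. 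The initial value $\|\vu_N(0)\|_V = \|\dproj_N\vu_0\|_V$ is controlled by the hypothesis $\vu_0 \in \calD(\A^{s+1/2})$.

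\textbf{Main obstacle.} The entire bootstrap hinges on the two-dimensional interpolation estimate for $b(\vu_N,\vu_N,\A\vu_N)$ on the sphere $S$; once that manifold-adapted Ladyzhenskaya inequality is established, the remainder is routine Gronwall bookkeeping. A secondary technical point is confirming that the discrete and continuous inner products coincide for every pairing of $V_N$ elements that arises, which is ensured by the exactness choice $M=3N+2$ in \eqref{M_choice}; the non-polynomial inputs $\vf$ and $\omega$ are dealt with through the stability of $\dproj_N$ in \eqref{disc_proj_prop2a} and the pointwise structure of $\CC$, respectively.
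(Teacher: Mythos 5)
Your Step 1 coincides with the paper's energy estimate, but your Step 2 takes a genuinely different route at the decisive point. The paper does not need any Ladyzhenskaya-type interpolation bound for the trilinear term: it invokes the identity $b(\vv,\vv,\A\vv)=0$ for divergence-free fields on the two-dimensional sphere (Lemma~3.1 of \cite{ilin91}, the spherical analogue of the classical identity on the periodic plane), together with the exact vanishing $(\CC\vu_N,\A\vu_N)=0$ computed from \eqref{Coriolis_ZLm}. The nonlinear and Coriolis terms therefore disappear entirely from the enstrophy balance, which reduces to the linear inequality $\frac{d}{dt}\|\vu_N\|_V^2+\nu\|\A\vu_N\|^2\le \|\vf\|_\infty^2/(\nu\lambda_1)$, and the conclusion follows directly from the Constantin--Foias argument with no bootstrap through the time-integrated bound $K_2$ of your Step~1. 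Your route --- absorbing $|b(\vu_N,\vu_N,\A\vu_N)|\le C\|\vu_N\|^{1/2}\|\vu_N\|_V\|\A\vu_N\|^{3/2}$ by Young and closing with Gronwall using $\int_0^T\|\vu_N\|_V^2\,ds\le K_2$ --- is the standard one for geometries where the vanishing identity fails, and it would work here too; but you correctly flag the manifold-adapted interpolation inequality as the crux and then leave it unproven, whereas it is in fact unnecessary on the sphere. If you did want to keep your route, that inequality can be obtained from the extension-to-a-spherical-layer technique and the embedding \eqref{Sobolev_imbedding} used in the Appendix (Proposition~\ref{foias_prop_S}), but the resulting constant-tracking is strictly more work than quoting Il'in's identity, and your bound would degrade with $T$ through $K_2$ while the paper's does not.
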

\begin{proof}
The proof follows by repeating  the arguments described 
in the first four pages 
of~\cite[Section~9]{constantin_foias}
(proving~\cite[Proposition~9.1]{constantin_foias}),
provided that we
establish~\cite[Inequalities~(9.3)~and~(9.13)]{constantin_foias}  
for our system~\eqref{app_weak_form} on the spherical surface
with additional Coriolis term and quadrature approximations. 

%%By Lemma~\ref{lem:trilinear}, \eqref{B} and the exactness
Using~\eqref{skew}, \eqref{B} and the exactness
of the quadrature rule (given by \eqref{disc_ip}-\eqref{M_choice}), 
we have $\left(\B(\vu_N,\vu_N),\vu_N\right)_M = 0$.
Using \eqref{Coriolis_ZLm}, the symmetry of the coefficients of $\vu_N$ 
in \eqref{app_soln_rep} and the exactness of the quadrature, we get  
\begin{equation}\lb{CuNuN}
(\CC\vu_N,\vu_N)_M=
(\CC\vu_N,\vu_N) = 
(-2\Omega i) 
\sum_{L=1}^N \lambda^{-1}_L \sum_{|m|\le L} m|\alpha_{L,m}|^2 
= 0. 
\end{equation}
By taking $\vv$ to be $\vu_N$ in \eqref{app_weak_form} and 
using   $\|\vu_N\|^2 = (\vu_N,\vu_N)_M$,
$\|\vu_N\|_V^2 = (\A \vu_N,\vu_N)_M$,  \eqref{CuNuN}, 
Young's inequality and the fact that all the eigenvalues  
$\lambda_J$ of $\A$ 
(corresponding to eigenvectors in $\vu_N$) satisfy 
$\lambda_J \geq \lambda_1 = 2, J = 1, \cdots, N$,
we obtain  
\[
\frac{1}{2} \frac{d}{dt} \|\vu_N\|^2
+ \nu \|\vu_N\|_V^2 
= (\vf,\vu_N)_M 
\le \|\vf\|_{\infty}  \|\vu_N\|
\le  \frac{ \|\vf\|^2_{\infty} } {4\nu} 
   + \nu \|\vu_N\|^2 \leq \frac{ \|\vf\|^2_{\infty} } {4\nu} + \frac{\nu}{2} \|\vu_N\|_V^2,
\]
Hence, for our discrete system~\eqref{app_weak_form}, 
we obtain~\cite[Inequality~(9.3)]{constantin_foias}: 
\begin{equation}\label{ode_ineq}
 \frac{d}{dt} \|\vu_N \|^2 + \nu \|\vu_N\|_V^2 
  \le \frac{\|\vf\|^2_{\infty}}{\nu \lambda_1}.
\end{equation}
Again using \eqref{Coriolis_ZLm}, the exactness of the
quadrature rule, eigenfunction properties of $\A$, and 
the symmetry of the coefficients of $\vu_N$ in 
\eqref{app_soln_rep}, we get 
\begin{equation}\lb{CuNAuN}
(\CC\vu_N,\A\vu_N) = 
(-2\Omega i) 
\sum_{L=1}^N  \sum_{|m|\le L} m|\alpha_{L,m}|^2 
=0, 
\end{equation}
By taking $\vv$ to be $\A\vu_N$ in equation \eqref{app_weak_form},
and  using   $\|\A\vu_N\|^2 = (\A\vu_N,\A\vu_N)_M$, \eqref{M_choice}, and  \eqref{CuNuN}, 
we obtain 
\be\label{inprodAuN}
\frac{1}{2} \frac{d}{dt} \|\vu_N\|^2_V +
\nu \|\A \vu_N\|^2 + b(\vu_N,\vu_N,\A\vu_N) = (\vf,\A\vu_N)_M.
\ee
Using~\cite[Lemma~3.1]{ilin91},  $b(\vu_N,\vu_N,\A\vu_N) = 0$. 
The term $(\vf,\A\vu_N)_M$ can be estimated  by using the exactness
of the quadrature and Young's inequality:
\[
(\vf,\A\vu_N)_M \le \|\vf\|_{\infty}  \|\A\vu_N\|
\le  \frac{ \|\vf\|^2_{\infty} } {2 \nu} 
   + \frac{\nu}{2} \|\A\vu_N\|^2
\] 
Hence we obtain a stronger version of ~\cite[Inequality~(9.13)]{constantin_foias} for
our quadrature discrete scheme~\eqref{app_weak_form}: 
\[
 \frac{d}{dt} \|\vu_N\|^2_V + \nu \|\A\vu_N\|^2
 \le  \frac{ \|\vf\|^2_{\infty} } {\nu \lambda_1}.
\]
Thus, the result follows from arguments in~\cite[Page~74-77]{constantin_foias}.
\end{proof}
%%%%%%%%%%%%%%%%%%%%%%%%%%%%%%%%%%%%%%%%%%%%%%%%%%%%%%%%%%%%%%%%%%%%%%%%%
%% Stability estimate 
%%%%%%%%%%%%%%%%%%%%%%%%%%%%%%%%%%%%%%%%%%%%%%%%%%%%%%%%%%%%%%%%%%%%%%%%%

Next, using 
Theorem~\ref{cont_est},~\ref{der_cont_est}, and ~\ref{stability_thm}, we prove the spectral convergence of
the solution $\vu_N$ of  (\ref{app1_weak_form}) to the solution
of  $\vu$ of (\ref{weak_form}).
%%%%%%%%%
%% THEOREM 4.2
\begin{theorem}\label{conv_anal}
Let  $\vu_0$,  $\vf$ satisfy 
\eqref{supf_in_T}, \eqref{main_assumption}.
%and the solution $\vu$ satisfies \eqref{ass:solutionboundedT}. 
Then there exists a $T^{\#}> 0$, 
depending only on $\nu,\vf$, $\vu_0$ and the uniform
bound in \eqref{ass:solutionboundedT}
(and hence there exists $0 < \mu(t) < \min\{t,T^{\#},\sigma_1\}$) 
such that for all $ t \in (0,T)$, 
\begin{equation} \label{final_bound}
  \|\vu-\vu_N\| \le   C \left[ N^{-2s-1} e^{-\mu(t) N}
+  \left\|\proj_N \vf - \dproj_N \vf\right\| \right].
\end{equation}
In particular, with  $2s$ being an integer
\begin{equation}\label{int_bound}
  \|\vu-\vu_N\| \le   C N^{-2s}.
\end{equation}
\end{theorem}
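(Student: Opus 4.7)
The plan is to follow the classical Stokes-projection error splitting. Define $\eta := \vu - \tilde{\vu}_N$ and $\theta := \tilde{\vu}_N - \vu_N \in V_N$, so that $\vu - \vu_N = \eta + \theta$. Theorem~\ref{cont_est} already supplies the spectrally small bound on $\|\eta(t)\|$, so the task reduces to bounding $\|\theta(t)\|$ by the same type of quantity appearing on the right-hand side of \eqref{final_bound}.

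To obtain the evolution equation for $\theta$, I subtract the continuous weak formulation \eqref{weak_form} from the pseudospectral system \eqref{app1_weak_form}, both tested against $\vv \in V_N$. Since $\tilde{\vu}_N = \proj_N \vu$, the Stokes-projection identity eliminates $\eta$ from the diffusion term, and the orthogonality $(\vu_t - \tilde{\vu}_{N,t},\vv) = 0$ for $\vv \in V_N$ removes $\eta_t$ from the left-hand side. The quadrature exactness \eqref{M_choice} with $M = 3N+2$ ensures that the only non-polynomial residual sits on the right-hand side, giving
\[
(\theta_t,\vv) + \nu(\A\theta,\vv) + (\B(\vu,\vu) - \B(\vu_N,\vu_N),\vv) + (\CC(\eta+\theta),\vv) = ((\proj_N - \dproj_N)\vf,\vv),
\]
for all $\vv \in V_N$, with initial datum $\theta(0) = \proj_N\vu_0 - \dproj_N\vu_0$, whose norm is itself controlled by the quadrature error on the data.

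Testing with $\vv = \theta$, the Coriolis self-pairing $(\CC\theta,\theta)$ vanishes as in \eqref{CuNuN} and $b(\vu_N,\theta,\theta) = 0$ by \eqref{skew}. Splitting the nonlinear residue as $\B(\vu,\vu) - \B(\vu_N,\vu_N) = \B(\eta+\theta,\vu) + \B(\vu_N,\eta+\theta)$ and estimating it via the weak Lipschitz bound \eqref{weakLip} together with the uniform bounds $\|\A^{1/2}\vu\| \le C$ from \eqref{bounded_soln} and $\|\A^{1/2}\vu_N\| \le C$ from Theorem~\ref{stability_thm}, Young's inequality absorbs a fraction of $\|\theta\|_V^2$ into $\nu\|\theta\|_V^2$ and yields a differential inequality of the form
\[
\frac{d}{dt}\|\theta\|^2 + \nu\|\theta\|_V^2 \le C\|\theta\|^2 + C\|\eta\|^2 + C\|(\proj_N-\dproj_N)\vf\|^2.
\]
Gronwall on $[0,t]$ bounds $\|\theta(t)\|$ in terms of $\|\theta(0)\|$, $\int_0^t \|\eta(s)\|^2\,ds$ and the quadrature error on $\vf$. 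Then $\|\vu - \vu_N\| \le \|\eta\| + \|\theta\|$ delivers \eqref{final_bound}, and \eqref{int_bound} follows by $\|\proj_N\vf - \dproj_N\vf\| \le \|\vf - \proj_N\vf\| + \|\vf - \dproj_N\vf\|$ together with \eqref{app_power_vec} and \eqref{disc_proj_prop2}.

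The principal difficulty is preserving the exponential-in-$N$ factor $e^{-\mu(t)N}$ in \eqref{final_bound}. Since $\sigma(s) = \min\{s,T^*,\sigma_1\}$ vanishes as $s\to 0^+$, the bound $\|\eta(s)\| \le C N^{-2s-1}e^{-\sigma(s)N}$ degenerates, and the crude estimate $\int_0^t \|\eta(s)\|^2\,ds \le t\sup\|\eta\|^2$ destroys the exponential decay. To recover it, I integrate by parts in time in the Duhamel representation of $\theta$: the $\eta$-dependent contribution can be recast as a boundary value $(\nu\A)^{-1}\CC\eta(t)$ (which carries $e^{-\sigma(t)N}$) plus an integral involving $\eta_s$, and the exponential bound $e^{-\psi_1(s)N}$ on the latter is precisely the content of Theorem~\ref{der_cont_est}. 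The final rate $\mu(t) < \min\{t,T^{\#},\sigma_1\}$ in \eqref{final_bound} is then an interplay between $\sigma(t)$ and $\psi_1(t) = \min\{(1-1/\sqrt{2})t,T^{**},\sigma_1\}$.
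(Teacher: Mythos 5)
Your skeleton---the splitting $\vu-\vu_N=\eta+\theta$ with $\eta=\vu-\tilde{\vu}_N$ the Stokes-projection error, an evolution equation for $\theta\in V_N$, Gronwall, and the final triangle inequality handling $\|\proj_N\vf-\dproj_N\vf\|$ via \eqref{app_power_vec} and \eqref{disc_proj_prop2}---is exactly the paper's. The divergence, and the genuine gap, is in how $\theta$ is estimated. The paper does not test with $\vv=\theta$; it writes the mild (Duhamel) representation $\vw_N(t)=\int_0^t e^{-(t-s)(\nu\A+\CC)}[\cdots]\,ds$ and exploits the parabolic smoothing $R_N(\delta,t-s)=\|\nu^{\delta}\proj_N\A^{\delta}e^{-(t-s)(\nu\A+\CC)}\|\lesssim (t-s)^{-\delta}$, which is integrable precisely because $\delta<1$. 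This is forced by the only available control of the nonlinear residual, namely \eqref{weakLip}, which bounds $\|\A^{-\delta}(\B(\vu,\vu)-\B(\vu_N,\vu_N))\|$ only for $\delta\in(1/2,1)$. In your energy identity this term dualizes as $\bigl(\A^{-\delta}[\B(\vu,\vu)-\B(\vu_N,\vu_N)],\,\A^{\delta}\theta\bigr)\le C\|\vu-\vu_N\|\,\|\A^{\delta}\theta\|$, and since $\delta>1/2$ the factor $\|\A^{\delta}\theta\|$ is \emph{not} dominated by $\|\theta\|_V=\|\A^{1/2}\theta\|$, so Young's inequality cannot absorb it into $\nu\|\theta\|_V^2$; an inverse inequality on $V_N$ would reintroduce a factor $\lambda_N^{\delta-1/2}$ and degrade the rate. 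So the step ``estimate via \eqref{weakLip} and absorb by Young'' fails as written. An energy argument could be salvaged, but only by abandoning \eqref{weakLip} and estimating $b(\eta+\theta,\vu,\theta)$ and $b(\vu_N,\eta+\theta,\theta)$ directly from Proposition~\ref{foias_prop_S} with the interpolation $\|\theta\|_{H^{1/2}}\le\|\theta\|^{1/2}\|\theta\|_V^{1/2}$, using \eqref{ass:solutionboundedT} and Theorem~\ref{stability_thm}---a different argument from the one you propose.

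Your diagnosis of the second difficulty is correct ($\sigma(s)\to0$ as $s\to0^+$ kills the factor $e^{-\mu(t)N}$ if one crudely takes $\sup_s\|\eta(s)\|$), and you rightly identify Theorem~\ref{der_cont_est} as the missing ingredient, but the proposed repair is incoherent with your own framework: you graft a time integration by parts in a Duhamel formula onto an energy/Gronwall argument that has no Duhamel representation, and the device you sketch addresses only the linear $\CC\eta$ contribution, not the $\eta$-terms inside the nonlinear residual. In the paper, $\|\tfrac{d}{dt}\vp_N\|$ enters directly because the functional form of the error equation retains the term $\proj_N[(\vp_N)_t+\CC\vp_N]$, and Theorems~\ref{cont_est} and~\ref{der_cont_est} are applied to the coefficients multiplying $\int_0^t R_N(0,t-s)\,ds\le C$ in \eqref{Gron}; no integration by parts in time is performed.
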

\begin{proof}
Let $\vw_N = \tilde{\vu}_N - \vu_N$, where the comparison
function  $\tilde{\vu}_N$ is the
solution of (\ref{elliptic1}). Since  
$\vu - \vu_N = \vp_N  + \vw_N$, where
$\vp_N = \vu - \tilde{\vu}_N$, in view of
Theorem~\ref{cont_est} and \ref{der_cont_est},
existence of $T^{\#}$ and
$\mu(t)$ follows and it is sufficient to show that 
$ \|\vw_N\| \le   C  \left[ N^{-2s-1} e^{-\mu(t) N} +
 \left\|\proj_N \vf - \dproj_N \vf\right\| \right]
 $.
For  any  $\vv \in V_N$, using (\ref{op_form}), (\ref{elliptic1}), 
and (\ref{app1_weak_form}),
\beas
&& \hspace{-0.2in} ((\vw_N)_t, \vv) + \nu (\A\vw_N, \vv) + (\CC\vw_N,\vv) \\ 
&& \hspace{-0.2in} = 
      ((\tvuN)_t,\vv)+\nu (\A\tvuN,\vv)+(\CC\tvu_N,\vv) 
    -((\vu_N)_t,\vv)-\nu(\A\vu_N,\vv)-(\CC\vu_N,\vv) \\
&& \hspace{-0.2in} =  
((\tvuN)_t,\vv)+\nu(\A\tvuN,\vv)+(\CC\tvu_N,\vv) 
     -(\vf,\vv)_M + (\B(\vu_N,\vu_N),\vv) \\
&& \hspace{-0.2in} =  
 ((\tvuN)_t,\vv)+\nu(\A\vu,\vv) + (\CC\tvu_N,\vv) 
     - (\vf,\vv)_M + (\B(\vu_N,\vu_N),\vv) \\
%&& \hspace{-0.2in} = 
%   ((\tvuN)_t,\vv)+(\vf,\vv)-(\B(\vu,\vu),\vv)-(\vu_t,\vv) 
%             - (\CC\vu,\vv) 
%  +(\CC\tvu_N,\vv)-(\vf,\vv)_M + (\B(\vu_N,\vu_N),\vv) \\
&& \hspace{-0.2in} = 
   ((\tvuN-\vu)_t,\vv)+(\vf,\vv) - (\vf,\vv)_M + (\CC\tvuN-\CC\vu,\vv) +
    (\B(\vu_N,\vu_N)-\B(\vu,\vu),\vv).  
\eeas
Using the orthogonal projection  $\proj_N$ in  (\ref{fin_space_vec}), 
we can write this relation in functional form as 
\[
\frac{d\vw_N}{dt} = (-\nu \A-\CC)\vw_N - \mathbf{\proj_N} \left[(\vp_N)_t + \CC\vp_N\right] 
+ \proj_N \vf - \dproj_N \vf 
   + \proj_N\left[\B(\vu_N,\vu_N) - \B(\vu,\vu)\right].
\]
Integrating with respect to $t$ and using  $\vw_N(0)=0$, we have 
\bea
 \vw_N (t) &=& 
  \int_{0}^t e^{-(t-s)(\nu \A+\CC)} \left[- \proj_N (\frac{d}{ds}\vp_N + \CC\vp_N) + \proj_N \vf - \dproj_N \vf \right](s) \;ds \nonumber \\
& &  + \int_{0}^t e^{-(t-s)(\nu \A+\CC)} \proj_N \left[\B(\vu_N,\vu_N)-\B(\vu,\vu)\right](s) \;ds. \label{wN-1}
\eea
Let
\begin{equation}\label{RN}
R_N(\epsilon, t-s)  = \|\nu^{\epsilon}\proj_N \A^{\epsilon} e^{-(t-s)(\nu \A+\CC)}\|.
\end{equation}
On using (\ref{weakLip}), with $\delta \in  (1/2,1)$,
and the uniform boundedness of the orthogonal projection $\proj_N$,  we get
\bea
& & \|e^{-(t-s)(\nu \A+\CC)}\proj_N\left[\B(\vu_N,\vu_N)-\B(\vu,\vu)\right]\| \\
 & & \le \frac{R_N(\delta, t-s)}{\nu^{\delta}} \|\A^{-\delta}\proj_N (\B(\vu_N,\vu_N)-\B(\vu,\vu))\| \nonumber
 \le C R_N(\delta,t-s)  \|\vu_N-\vu\|. \label{bd_1}
\eea
Taking norms and using 
$\|\vu_N -\vu\| \le \|\vw_N\|+\|\vp_N\|$ together with (\ref{RN}), 
and (\ref{bd_1}) in (\ref{wN-1}),  we obtain
\beas
\|\vw_N(t)\|   &\le &
\left[\|\frac{d}{dt}\vp_N\| + \|\CC\vp_N\|
+ \left\|\proj_N \vf - \dproj_N \vf\right\| \right]  
\int_{0}^{t} R_N(0, t-s)\;ds  \\        
&& + C \int_0^t R_N(\delta, t-s) (\|\vp_N(s)\|+\|\vw_N(s)\|) ds.
\eeas
Using Gronwall's inequality, we obtain for each $t \in [0,T]$,
\bea
\|\vw_N(t)\|  &\le & C \left\{
\left[\|\frac{d}{dt}\vp_N\| + \|\CC\vp_N\|
+  \left\|\proj_N \vf - \dproj_N \vf\right\| \right]  
\int_{0}^{t} R_N(0, t-s)\;ds   \right.  
\nonumber \\         && \qquad 
\left .
+ \|\vp_N\|\  \int_0^t R_N(\delta, t-s)\;ds \right \}. \label{Gron}
\eea
For $\epsilon \in [0,1]$, we have to bound 
\begin{equation}\label{Rmore}
 \int_0^t R_N(\epsilon , t-s)\;ds
=  \int_0^t R_N(\epsilon , r)\;dr.
\end{equation}
Using also (\ref{RN}) and \eqref{Coriolis_ZLm},
\[
 R_N(\epsilon, r) \leq \max_{1\le L\le N; |m|\le L } 
      |(\nu\lambda_L)^{\epsilon} e^{-\nu \lambda_L r}e^{-2\Omega irm\lambda^{-1/2}_L}|
    \le \max_{ z \in[\nu\lambda_1,\nu\lambda_N]} z^\epsilon e^{-rz}.
\]
Thus
\begin{equation} \label{Rcases}
 R_N(r) \le\begin{cases}
         (\nu \lambda_N)^\epsilon e^{-\nu\lambda_N r} & \mbox{ if } r \le \epsilon/(\nu\lambda_N),  \cr
         \epsilon^\epsilon e^{-\epsilon} r^{-\epsilon} & 
            \mbox{ if } \epsilon/(\nu\lambda_N) \le r \le \epsilon/(\nu\lambda_1)  ,\cr
        (\nu \lambda_1)^\epsilon e^{-\nu\lambda_1 r} & \mbox{ if } r \ge \epsilon/(\nu\lambda_1).\cr
     \end{cases}
\end{equation}
With  
\begin{equation} \label{sub_int}
 I_1 = [0,\epsilon/(\nu\lambda_N)]\cap[0,t], \quad
 I_2 = [\epsilon/(\nu\lambda_N),\epsilon/(\nu\lambda_1)]\cap [0,t], \quad
 I_3 = [\epsilon/(\nu\lambda_1),t]\cap [0,t],
\end{equation}
the interval of integration in (\ref{Rmore}) can be subdivided into
these three intervals. In particular, using (\ref{Rcases}) and (\ref{sub_int}),
we get 
\begin{equation} \label{I1}
  \int_{I_1} R_N(r) dr \le \int_{I_1} (\nu\lambda_N)^\epsilon e^{-\nu\lambda_N r} dr
    \le \frac{(1-e^{-\epsilon})}{(\nu\lambda_N)^{1-\epsilon}} \leq C,
\end{equation}
\begin{equation} \label{I2}
  \int_{I_2} R_N(r) dr \le \int_{I_2} \epsilon^\epsilon e^{-\epsilon} r^{-\epsilon} dr
  \le \frac{\epsilon e^{-\epsilon}}{(1-\epsilon)}
  \left[ \left(\frac{1}{\nu\lambda_1}\right)^{(1-\epsilon)} - 
         \left(\frac{1}{\nu\lambda_N}\right)^{(1-\epsilon)}\right] \le C,
\end{equation}
and
\begin{equation} \label{I3}
 \int_{I_3} R_N(r) dr \le \int_{I_3} (\nu\lambda_1)^\epsilon e^{\nu \lambda_1 r} dr
  \le \frac{ (e^{-\epsilon}-e^{-\nu\lambda_1 t}) }
           {(\nu \lambda_1)^{1-\epsilon}} \le C. 
\end{equation}
Using (\ref{I1})- (\ref{I3}) in (\ref{Rmore}), we get
\begin{equation}\label{bd}
\int_0^t R_N(\epsilon , t-s)\;ds \le C, \qquad \epsilon \in [0,1].
\end{equation}
Substituting this in (\ref{Gron}), we get 
\begin{equation}\label{penul}
\|\vw_N(t)\|  \le  C 
\left[\|\frac{d}{dt}\vp_N\| + \|\CC\vp_N\| +  \|\vp_N\|\  
+  \left\| \proj_N\vf - \dproj_N \vf\right\| \right].
\end{equation}
The term $\|\CC\vp_N \|$ in \eqref{penul} can be simplified 
using (\ref{C}) and the fact that $\vp_N$ is tangential 
(and hence $\xh \cdot \vp_N(\xh) = 0$), %we get for the unit vector $\xh$, 
\[
\left[\xh \times \vp_N(\xh)\right] \cdot \left[\overline{(\xh \times \vp_N(\xh)}\right]  =  
\left[\xh \cdot \xh\right] \left[\vp_N(\xh) 
\cdot \overline{\vp_N(\xh)}\right] = \| \vp_N \|^2,
\]
and hence 
\begin{equation}\label{penul1}
\|\vw_N(t)\|  \le  C 
\left[\|\frac{d}{dt}\vp_N\| +  \|\vp_N\|\  
+    \left\|\proj_N \vf - \dproj_N \vf\right\| \right].
\end{equation}
Hence from Theorem~\ref{cont_est} and \ref{der_cont_est},
for all $t \in (0, T)$ we have
\begin{equation} \label{final_bound1}
  \|\vu-\vu_N\| \le   C \left[ N^{-2s-1} e^{-\mu(t) N}
+  \left\|\proj_N \vf - \dproj_N \vf\right\| \right].
\end{equation} 
In particular, using (\ref{app_power_vec}) 
and (\ref{disc_proj_prop2}) with $2s$ being an integer, we get
\begin{equation}  \label{final_bound2}
 \left\|\proj_N \vf - \dproj_N \vf\right\| 
\le 
  \left\|\proj_N\vf -  \vf\right\| +
  \left\| \vf - \dproj_N \vf\right\| \le C N^{-2s}.
\end{equation}
Now the result (\ref{int_bound}) follows from  (\ref{final_bound1}) 
and  (\ref{final_bound2}).
\end{proof}

\subsection{Adaptive and fast implementation of the  pseudospectral method}
Having established spectrally accurate convergence of the spatially discrete
scheme  \eqref{app1_weak_form}, for implementation of the scheme \eqref{app1_weak_form}
to simulate  stable and accurate solutions of  \eqref{NSE_sphere} and compare
with  benchmark random flow simulations in the literature, we need to 
discretize  the time derivative operator $\frac{d}{dt}$ 
in  \eqref{app1_weak_form}.
Further, at {\em each}
discrete time step  we develop a (FFT-based)  fast evaluation technique to
set up the resulting fully discrete nonlinear system with spatial 
$\calO(N^4)$ complexity.  First we consider discretization of $\frac{d}{dt}$ in 
\eqref{app1_weak_form}.

In order to the make  \eqref{app1_weak_form} fully discrete in space and time,
and hence compute the  $N^2+2N$  unknown  time-dependent coefficients
in the representation of the tangential divergence-free approximate real
velocity vector field
\begin{equation}\label{app_soln_rep}
\vu_N (\xh,t) := \sum_{L=1}^N \sum_{|m| \le L} \alpha_{L,m}(t) \Z_{L,m}(\xh), \quad
\alpha_{L,m} = \overline \alpha_{L,-m},
% \quad \xh \in S,  t \in (0,T], 
\quad   \alpha_{L,m}(0) = (\vu_0, \Z_{L,m})_M,  
\end{equation}
for $\xh \in S$ and $t \geq 0$,  the standard  fixed-time-step 
backward-Euler (or Crank-Nicolson) Galerkin approach could be used 
in (\ref{app_weak_form}), leading to a first-order (or a second-order, respectively) 
in time non-adaptive scheme~\cite{gan_kas_06}.  
However, due to the complicated unknown flow behavior of the NSE solutions,
when the initial  states are random, it is more efficient instead to integrate 
(\ref{app_weak_form}) using  a combination of  
multi-order integration formulas that allow 
adaptive choice of time step, leading to computation of solutions with a specified
accuracy in time. In this
paper we follow the latter approach.

For implementation purposes, we first need to  substitute (\ref{app_soln_rep}) 
in (\ref{app1_weak_form}). We write the resulting 
$N^2 + 2N$-dimensional system of ordinary differential equations (ODE), for the unknown 
$N^2 + 2N$ time dependent coefficients of $\vu_N(\xh,t)$  in \eqref{app_soln_rep},  
as
\begin{equation}\label{ode}
\frac{d}{dt}\boldsymbol{\alpha}(t) = \mathbf{F}(t,\boldsymbol{\alpha}(t)).
\end{equation}

It is well known that such nonlinear ODE systems  are stiff, and hence it 
is important to use time discretization techniques with a large stability region~\cite{har_wan_book}.
Further, it is important to use high-order implicit formulas whenever possible, but 
the high-order discretization formulas are appropriate only at those time steps 
where the unknown exact solution is smooth. 

For practical problems such as 
the Navier-Stokes equations (with initial random state)
where the necessary  spatial discretization (at each time step) is expensive, 
 it is important  to optimize computing time by simulating only
up to a required accuracy by choosing adaptive discrete time steps. 
Unlike the adaptive spatial mesh for 
elliptic PDEs based on the {\em a posteriori} estimates, 
adaptive time steps can be computed  by comparing 
numerical solutions obtained using two distinct order formulas~\cite{har_wan_book}
and hence simulation  using multi-order formulas is appropriate.

In particular, for practical realization of large stiff nonlinear ODE systems,
multi-order implicit backward differentiation formulas (BDF) and their generalizations 
such as the numerical differential formulas (NDF) are most appropriate.
The implicit NDF formula of order $p$ (NDFp) with a parameter $\kappa_p$ (so that $\kappa_p = 0$
corresponds to BDFp) for the system (\ref{ode}), with 
$\boldsymbol{\alpha}_n \approx  \boldsymbol{\alpha}(t_n)$ and $\nabla^m$ denoting the 
$m$-th Newton backward difference operator,  is
\begin{equation}\label{ode_disc}
\sum_{m = 1}^p \frac{1}{m} \nabla^m \boldsymbol{\alpha}_{n+1} =
h\mathbf{F}(t_{n+1},  \boldsymbol{\alpha}_{n+1}) + 
\kappa_p \nabla^{p+1} \boldsymbol{\alpha}_{n+1}  \sum_{j = 1}^p \frac{1}{j} .
\end{equation}
It is well known~\cite{har_wan_book} that BDFp (and hence NDFp) are unstable 
for $p > 6$, and for $p = 6$ the stability region is small and hence not practically
useful in our case. Further the celebrated Dahlquist barrier~\cite{har_wan_book} implies
that  BDFp (and hence NDFp) cannot be   absolutely stable 
[that is, $A(\alpha)$-stable with  $\alpha = 90\,^\circ$] for $p > 2$. 

Following details in~\cite{ode_suite}, for simulation of \eqref{ode} 
we use multi-order  NDFp with  $p = 1, 2, 3, 4, 5$ 
(and respective $\kappa_p = -0.1850, -1/9, -0.0823, -0.0415, 0$) 
and these are  $A(\alpha_p)$-stable, with respective
$\alpha_p = 90^\circ,  90^\circ, 80^\circ, 66^\circ, 51^\circ$. 
For $p = 1, 2, 3, 4, 5$, NDFp is more accurate than BDFp, however 
NDFp has slightly smaller stability angle compared BDFp only for $p= 3, 4$ 
(with respective $\alpha_p = 86^\circ,  73^\circ$) and the same stability
angle for $p = 1, 2, 5$.

For each fixed time  discretization step, the computational cost  
is dominated by evaluation of 
$\mathbf{F}(t_{n+1},  \boldsymbol{\alpha}_{n+1})$ in \eqref{ode_disc}
and it is important to have an efficient method to set up the 
spatial part of the  nonlinear system \eqref{app1_weak_form}.
Using the spectral properties of the Stokes operator $\A$ given by 
(\ref{stok_eig}),  the linear second term in   (\ref{app1_weak_form})
is trivial to evaluate using the diagonal matrix consisting of the
eigenvalues of $\A$. 

The Coriolis term  can be evaluated similarly
using the identity~\cite[Equation (24)]{cao_rammaha_titi99}
\begin{equation}\label{Coriolis_ZLm}
(\CC \; \Curl Y_{J,k}, \Z_{L,m}) =   
(2 \Omega \cos \theta \xh \times \Curl Y_{J,k}, \Z_{L,m}) =   
  - 2\Omega i \frac{m} {\lambda^{1/2}_L} \delta_{L,J} \delta_{k,m}.
\end{equation}
For the first component in the nonlinear third term in (\ref{app1_weak_form}),
we use (\ref{short_nonlin1}) to write
\bea 
 \B(\Z_{R,s},\Z_{J,k}) &=&  -\frac{1}{\sqrt{\lambda_R
     \lambda_J}}\PP_{\Curl}(\Delta Y_{R,s} \Grad Y_{J,k}), \label{eval_nonlin}
 \\
 \left(\B(\Z_{R,s},\Z_{J,k}),\Z_{L,m}\right) &=&  \sqrt{\frac{\lambda_R}
{\lambda_J\lambda_L }}\left(Y_{R,s} \Grad Y_{J,k}, \xh \times  \Grad Y_{L,m}\right).\label{eval_nonlin1}
\eea
It is convenient to write $\Grad Y_{J,k}$ and  $\xh \times  \Grad Y_{L,m}$ in terms
of expressions similar to those in (\ref{sph_har}). Such  explicit representations
are also useful for the efficient evaluation of the $N^2$ Fourier coefficients $(\vf,\Z_{L,m})_M$ 
of the source term in (\ref{app1_weak_form}), and eventually for the computation of
the vorticity field.

In order to express the tangential (and normal, needed for computing the approximate 
vorticity from $\vu_N$) vector harmonics
as a linear combination of the scalar harmonics  (\ref{sph_har}), we first recall,  from the classical  quantum mechanics
literature (see, for example, \cite{varshalovich}), the covariant spherical basis vectors 
\be \label{covar}
  \ve_{+1} = - \frac{1}{\sqrt{2}} ([1,0,0]^T + i [0,1,0]^T), \quad 
  \ve_{0} = [0,0,1]^T, \quad 
  \ve_{-1} = \frac{1}{\sqrt{2}}([1,0,0]^T - i [0,1,0]^T),
\ee
and the Clebsch-Gordan coefficients  
\be
C^{j,m}_{j_1,m_1,j_2,m_2} := 
(-1)^{(m+j_1-j_2)}\sqrt{2j+1}\left(\begin{array}{lll}j_1 & j_2 & j \\ m_1 &  m_2 & -m
\end{array} \right),
\ee
where $\left(\begin{array}{lll}a & b & c \\ \alpha &  \beta & \gamma
\end{array} \right)$ are the  Wigner 3j-symbols given, for example, by the Racah formula, 
\beas
& & \hspace{-0.5in}
\left(\begin{array}{lll}a & b & c \\ \alpha &  \beta & \gamma
\end{array} \right) \\& & \hspace{-0.5in} = 
(-1)^{(a-b-\gamma)}\sqrt{T(abc)}\sqrt{(a+\alpha)!(a-\alpha)!(b+\beta)!(b-\beta)!(c+\gamma)!(c-\gamma)!} \times \\ & & \sum_t\frac{(-1)^t}
{t!(c-b+t+\alpha)!(c-a+t-\beta)!(a+b-c-t)!(a-t-\alpha)!(b-t+\beta)!},  
\eeas
where the sum is over all integers $t$ for which the factorials in the denominator 
all have nonnegative arguments. In particular, the number of terms in the sum
is $1+ \min\{a\pm \alpha, b\pm\beta, c\pm \gamma, a+b-c, b+c-a, c+a-b\}$. 
The triangle coefficient $T(abc)$ is  defined by 
\[
 T(abc) = \left[\frac{(a+b-c)!(a-b+c)!(-a+b+c)!}{(a+b+c+1)!} \right].
\]
Below, we require $C^{j,m}_{j_1,m_1,j_2,m_2}$ only for some $j_2, m_2 \in \{-1, 0, 1\}$, and 
using various symmetry and other known properties (such as 
$C^{j,m}_{j_1,m_1,j_2,m_2} = 0$ unless the conditions $|j_1 - j_2 | \le j \le j_1 + j_2$
and $m_1 + m_2 = m$  hold) of Wigner 3j-symbols, these coefficients can be efficiently 
pre-computed and stored.

In our computation, we used the following basis functions
for the tangential vector fields: 
$(i)~ \Grad Y_{L,m}, (ii)~ \xh \times \Grad Y_{J,k}$. 
For the vorticity  components of $\vu_N$, 
in addition we used  
$(iii)~ \Vort \Z_{J,m} =  \Curln\times \Z_{J,m} = \lambda_J^{1/2} \xh Y_{J,m}
= -\lambda_J^{-1/2} \xh \Delta  Y_{J,m} $. In particular, using (\ref{app_soln_rep})
our approximation  to the vorticity in (\ref{eq:vort}), for  a fixed $t \geq
0$ and  $\xh \in S$, is 
\begin{equation}\label{eq:vort_app}
 \Vort \vu_N (\xh,t)  = \Curln \vu_N (\xh) = \xh \Delta \Psi_N(\xh),
\end{equation}
where
\[
\Psi_N(\xh,t) = -\sum_{L=1}^N \sum_{|m| \le L} \lambda_L^{-1/2} \alpha_{L,m}(t)  Y_{L,m}(\xh).
\]

To facilitate  easy application of  fast transforms to evaluate 
these functions at the  $M = \mathcal{O}(N^2)$ quadrature points 
$\{\hxi_{p,q} = \p(\theta_p,\phi_q)\}$, we represent 
these three types of fields first as a linear combination 
of the covariant vectors in (\ref{covar}): 
\bea
%\hspace{-0.3in} 
\Grad Y_{L,m} &=& B_{+1,L,m} \ve_{+1} +
                  B_{0,L,m} \ve_{0} +
                  B_{-1,L,m} \ve_{-1}, \label{grady_term}\\
%\hspace{-0.3in}   
(\xh \times \Grad Y_{J,k})
              &=&  D_{+1,J,k} \ve_{+1} +
                   D_{ 0,J,k} \ve_{0}  +
                   D_{-1,J,k} \ve_{-1}, \label{curly_term}
\eea
With  $c_L =  (L+1) \sqrt{\frac{L}{2L+1}}, d_L =  L \sqrt{\frac{L+1}{2L+1}}$, 
these coefficients are explicitly given by 
\beas
 B_{+1,L,m} &=&
   \left\{c_L C^{L,m}_{L-1,m-1,1,1} P^{m-1}_{L-1}(\cos \theta) 
 + d_L C^{L,m}_{L+1,m-1,1,1} P^{m-1}_{L+1}(\cos \theta)\right\} e^{i(m-1)\varphi}\\
B_{0,L,m} &=&
   \left\{c_L C^{L,m}_{L-1,m,1,0} P^{m}_{L-1}(\cos \theta)
 + d_L C^{L,m}_{L+1,m,1,0} P^{m}_{L+1}(\cos \theta)\right\} e^{im\varphi}\\
B_{-1,L,m} &=&
  \left\{c_L C^{L,m}_{L-1,m+1,1,-1} P^{m+1}_{L-1}(\cos \theta)  
 + d_L C^{L,m}_{L+1,m+1,1,-1} P^{m+1}_{L+1}(\cos \theta)\right\} e^{i(m+1)\varphi}.
\eeas
\beas
 D_{+1,J,k} &=& i\sqrt{\lambda_J}C^{J,k}_{J,k-1,1,1} P^{k-1}_J(\cos\theta) e^{i (k-1) \varphi}, \\
 D_{0,J,k} &=& i\sqrt{\lambda_J} C^{J,k}_{J,k,1,0}P^{k}_J(\cos\theta) e^{i k \varphi},\\
 D_{-1,J,k}&=&i\sqrt{\lambda_J} C^{J,k}_{J,k+1,1,-1}P^{k+1}_J(\cos\theta) e^{i(k+1)\varphi}.
\eeas

Noting
(i)  the complex azimuthal exponential terms $e^{ik\varphi}, e^{im\varphi}$ in
(\ref{sph_har})
and  (\ref{grady_term})-(\ref{curly_term}) (via the above representations
for $B$ and $D$) for $|k| \leq J, |m| \leq L; 1 \leq L,J \leq N$,  and
(ii) the need to evaluate {\em several $\mathcal{O}(N^2)$} sums, of the form in
(\ref{app_soln_rep})
and  (\ref{eval_nonlin})-(\ref{eval_nonlin}),  
at the equally spaced $\mathcal{O}(N)$ azimuthal  quadrature points (see (\ref{quadrule})), 
we reduce the complexity by $\mathcal{O}(N)$ in {\em each} of these sums, at
{\em each} adaptive-time step (described below),  by using the FFT for setting up 
the nonlinear system \eqref{app1_weak_form}, similar to the approach in~\cite{math_bio_pap}.
%% $\mathbf{F}(t,\boldsymbol{\alpha}(t))$ in (\ref{ode}).  
In our numerical experiments (see Section~\ref{num_exp}) for  
adaptive-time simulation  of a flow induced by random initial states,   
we observed that such  an efficient FFT based implementation 
reduced the (non-FFT code)  computing time  substantially for the case 
$N = 100$, to simulate from $t = 0$ to $t = 60$.

In addition,  by using the fast Legendre/spherical
transforms along the latitudinal direction
(obtained, for example, by modifying the NFFT algorithm 
in  \cite{fast_sph} for  evaluation of the Legendre 
functions in the above terms 
at  $\mathcal{O}(N)$ non-uniform latitudinal quadrature  points),   
we could reduce the complexity by $\mathcal{O}(N^2)$.
We did not use the fast Legendre/spherical transforms
in our implementation due to
the spectral convergence of the scheme 
and the fact that   $N \leq 100$ in our simulations. 
(In these complexity counts, we  ignored $\mathcal{O}(\log N)$ 
and $\mathcal{O}(\log^2 N)$ terms.)

\section{Numerical Experiments}\label{num_exp}
We demonstrate the fully discrete pseudospectral quadrature algorithm 
by simulating (i) a known solution test case with low to high 
frequency modes and (ii) a benchmark
example~\cite[page~305]{debussche_dubois_temam}  
in which the unknown velocity and vorticity fields 
are generated by a random initial state.

The first test example is useful to demonstrate that the 
pseudospectral quadrature algorithm reproduces any number of high
frequency modes in the solution (provided $V_N$ contains all these modes),
with computational error dominated only by the chosen accuracy 
for the adaptive time evolution for the ordinary differential system
(\ref{ode}). 

%\noindent
%{\bf Example~1.}
\subsection{Example~1.}
%The benchmark example considered in~\cite{debussche_dubois_temam, fengler_freeden} 
%has initial state consisting of fewer modes than those present in the 
%unknown velocity field, for $t > 0$. 
%Motivated by this, 
Our test case first example is (\ref{NSE_sphere}) with
\begin{equation}\label{ini_state}
 \vu|_{t=0}(\xh) = \vu_0(\xh) = g(0)[\W_1(\xh) - \W_2(\xh)],
\end{equation}
\begin{equation}\label{g_for_ini}
g(t) = \nu e^{-t}[\sin(5t) + \cos(10t)],  \quad  
\W_1 = \Z_{1,0} + 2\Re(\Z_{1,1}), \; 
\W_2 = \Z_{2,0} + 2\Re(\Z_{2,1} + \Z_{2,2}),
\end{equation}
where $\Z_{L,m}$ is given by (\ref{stok_eig}), $\Re(\cdot)$ denotes the
real-part function, and the external force $\vf(\xh,t)$ in (2.1) 
is chosen so that 
\begin{equation}\label{exact_soln}
\vu(\xh,t) =  tg(t)\sum_{L=1}^{N_0}
  \left[{\bf Z}_{L,0}+ 2 \sum_{m=1}^L  \Re(\Z_{L,m})\right](\xh)
        + g(t) \W_1(\xh)  + (t-1)g(t)\W_2(\xh),
\end{equation}
is the exact {\em tangential} divergence-free velocity field,
solving the NSE (\ref{NSE_sphere}).
The exact test field (\ref{g_for_ini})-(\ref{exact_soln}) has 
high oscillations both in space and time, and exponentially decays 
in time. Note the dependence on a parameter $N_0$, the maximum order
of the spherical harmonics in the exact solution.

In our calculation of the approximate solution $\vu_N$, we chose
$N=N_0$, so that all frequencies of the exact solution can be
recovered. The solution (\ref{exact_soln}) 
is then used to validate our algorithm and code 
by numerical adaptive time-integration of (\ref{app_weak_form}), 
for various values of $N=N_0$. 
In particular, for a fixed integration tolerance error, 
we demonstrate in Figure~\ref{exact_soln_err_fig}
that all $N$ modes in \eqref{exact_soln} can indeed be recovered 
by the approximate solution $\vu_N$,  
within the chosen error tolerance, 
for all $N = N_0 = 70, 80, 90, 100$.
\begin{figure}[h]
\centering
\includegraphics[width=12cm,height=9cm]{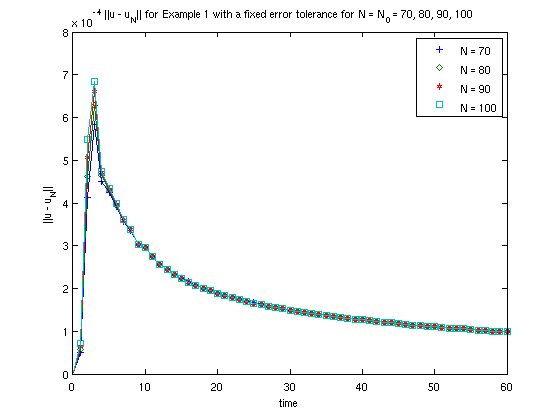}
\caption{$\|\vu - \vu_N\|$ for Example 1 with a fixed time-integration error, 
$N = N_0 = 70, 80, 90, 100$. }\label{exact_soln_err_fig}
\end{figure}

%\noindent
%{\bf Example 2.}
\subsection{Example 2.}
Having established the validity of our algorithm for a simple 
known solution, 
we use the same code to simulate unknown velocity and vorticity 
fields generated
by (\ref{NSE_sphere}) with random initial velocity
field as in~\cite{debussche_dubois_temam, fengler_freeden} with  
angular velocity of the rotation 
$\Omega = 1$  and  $\nu^{-1} = 10,000$  in (\ref{NSE_sphere}).

The initial flow and external force in this benchmark example
satisfy the main  assumption (\ref{main_assumption}) for 
{\em any}  $s, \sigma_1>0$ and hence, as proved in Theorem~\ref{conv_anal},
the approximate solution $\vu_N$ is spectrally accurate and converges
super-algebraically with order given by (\ref{int_bound}) for {\em any} 
$s > 0$. As mentioned in Section~\ref{introduction}, this is the main advantage of the present paper over  the recent paper~\cite{fengler_freeden}, 
where such spectrally accurate convergence results are neither discussed 
nor proved. On the other hand, convergence results 
for two-dimensional problems on a Euclidean plane, 
supported by numerical experiments, 
have formed a core part of research on 
the NSE over the last few decades, 
see~\cite{debussche_dubois_temam,Foias_book, temam79} and references therein.

The random initial tangential divergence-free velocity field, 
having properties similar to those considered 
in~\cite[page~305]{debussche_dubois_temam}
and~\cite[page~988]{fengler_freeden}
(but not exactly same as  in~\cite{debussche_dubois_temam, fengler_freeden}, 
due to randomness), is a smooth function $\vu_0 = \vv \in G^{s/2}_\sigma$, 
a Gevrey class of order $s$ and  index $\sigma$, see (\ref{Gev_space_char}),
for {\em any} $s, \sigma > 0$, with Fourier 
coefficients $\widehat{\vv}_{L,m},~~ L = 1, 2, \cdots,  |m| \leq L$,  
defined by
\begin{equation}
\widehat{\vv}_{L,m} = 
 \begin{cases} 
a_L  \exp(i \phi_m)   &   L=1,\ldots,20;~
  m=0,\ldots,L, \cr
a_L (-1)^m   \exp(-i \phi_m)   &   L=1,\ldots,20;~
  m=-L, \cdots, -1, \cr
0, & L > 20;~ m = -L, \ldots, L,
\end{cases}
\end{equation}
where  $\phi_0 = 0$, and  $\phi_m \in (0,2\pi)$  are random numbers 
for $m > 0$, 
and $a_L = b_L/||\mathbf{b}||$, with $\mathbf{b} \in \R^{20}$ 
having components  $b_L = 2/\left[L+(\nu L)^{2.5}\right],~L=1,\ldots, 20$.
The vorticity stream function $\Psi$ (see (\ref{eq:vort})) of 
$\Vort \vu(\cdot,0)$ in Figure~\ref{vort_t0} demonstrates the randomness
of the field at time  $t = 0$. 

The external force field $\vf = \vf(\xh,t)$ in
(\ref{NSE_sphere}) for our simulation is motivated by that considered
in~\cite[page~305]{debussche_dubois_temam} and is exactly same as that in 
~\cite[page~988]{fengler_freeden}. The source term $\vf$ is a 
decaying tangential divergence-free field which, 
for {\em any} $s, \sigma > 0$, belongs to 
$C([0,T];\calD(\A^{s+1/2} e^{\sigma \A^{1/2}}))$ (for any $T > 0$) 
with the only  non-zero Fourier coefficient being 
$\widehat{\vf(t)}_{3,0}$. The Fourier
coefficient $\widehat{\vf(t)}_{3,0}$ is a continuous function
in time and  is defined by 
\[
  \widehat{\vf(t)}_{3,0}  = \begin{cases} 
                 1, &  0 \le t \le 10, \cr
                 \cos(\pi t/5)\exp(-(t-10)/5) & t>10. \cr
              \end{cases}
\] 
The impact of the external force on the numerical
velocity and its complement (generating the approximate 
inertial manifold) is demonstrated in Figure~\ref{velocity_force_fig}
with the time evolution of the velocity field matching that of
the external force, leading to little change in evolution process
of the velocity 
%and the inertial manifold 
as the external force gets smaller and smaller.

%The second example has a random initial vector field, and a time-dependent 
%external force, leading to an analytically unknown velocity and vorticity 
%field. This example is motivated by the 
%benchmark test case considered, for example, in~\cite[page~305]{debussche_dubois_temam}.
We chose a fixed relative error 
tolerance to be of accuracy at least $\mathcal{O}(10^{-3})$, 
for adaptive time-integration solving the $N^2+2N$-dimensional system   
ordinary differential equations  (\ref{app_weak_form}) in time,  
using  backward differential formulas with variable order (one to five) 
and variable adaptive time step sizes that meet the fixed error tolerance. 

As discussed in the next subsection, the high-frequency 
components of the solution turn out to be
unreliable as $t$ increases, \cite{debussche_dubois_temam},
presumably because  of the time discretization error. We therefore
retained only the frequency components up to some order $N_1 \le N$,
correspondingly, we define
an additional approximation of $\vu_N$, defined in 
(\ref{app_weak_form})-(\ref{app_soln_rep}), by 
\begin{equation}\label{eq: cut_app_soln} 
\vu_{N_1;N} (\cdot,t) :=   \proj_{N_1} \vu_N (\cdot,t), \qquad N_1 \leq N.
\end{equation}
As in Theorem~\ref{conv_anal}, for the spatially discrete  
pseudospectral quadrature method, assuming
\eqref{main_assumption} and exact time integration 
of (\ref{app_weak_form}), and hence using \eqref{bounded_soln},
\eqref{app_power_veca},
and \eqref{int_bound}, we get spectral convergence: 
\[
  \|\vu- \vu_{N_1;N}\| 
\le 
  \left\|\vu - \proj_{N_1} \vu\right\| +  
\left\|\proj_{N_1}\left(\vu -   \vu_N\right)\right\| \le
    C \left[N_1^{-2(s+1)} +  N^{-2s} \right] \leq C N_1^{-2s}.
\]
Our simulated approximate velocity fields 
\[
\vU_{75}(t) := \vu_{75;100}(t), 
 \quad \mbox{ for }t = 10, 20, 30, 40, 50, 60,
\] 
are in Figure~\ref{panel_t10}--\ref{panel_t60} and the
associated vorticity stream function $\Psi_{75}(t)$ of
$\Vort \vU_{75}(t)$ (computed using (\ref{eq:vort_app}))  
are in Figure~\ref{vort_t10}--\ref{vort_t60}.
These figures demonstrate that the initial random flow
with several smaller structures evolve into regular flow with
larger structures, similar to those observed 
in~\cite[page~307]{debussche_dubois_temam}. The choice
of $N_1$ will be discussed in the next subsection.
\subsection{Energy spectrum of the solution} 
\label{energy_spec_discussion}
%As we demonstrate in Example~1, if  $\vu_N$ contains 
%all the modes in the exact solution $\vu$ it is approximating,
%then we can choose $M = N$ in  \eqref{eq: cut_app_soln}  
%and still recover all modes within 
%the chosen practical error tolerance. An important observation
%from  Example~1 results is that  the error tolerance for
%time-integration need
%not be very small such as the computationally very expensive
%choice, such as $\mathcal{O}(10^{-12})$, used in nonlinear Galerkin 
%and related methods, see for example~\cite[page~991]{fengler_freeden} 
%and references therein.

If  $\vu$ (and hence the number of modes in $\vu$) is unknown, 
as it is in the case in the benchmark test Example~2, 
and if $\vu_N$ does not contain
all of the modes in $\vu$, the higher modes of $\vu_N$ are
usually less accurate than the chosen practical error tolerance and
hence can  even violate important physical properties of $\vu$
(because of the error time-integration tolerance being not
{\em very} small). In such cases, it is important to choose  
$N_1 < N$, depending
on certain known physical properties of  $\vu$. 
%For example, in case
%of the benchmark test case flow $\vu$, generated by a smooth  initial 
%random field and a smooth external force
%(decaying after some fixed time),   due to analyticity of the unknown
%solution (in Example~2), an important physical property 
%of the exact solution of the NSE  is that
%the energy spectrum $E(\vu,L)$ is a decreasing function of 
%$L$ for, say,  $\geq 10$, (after some eddy turn-over). The choice
%of $M$ in the case can be obtained by first plotting  $E(\vu_N,L)$ 
%and then restrict to only $M < N$ modes that obey the property
%of the decay in energy spectrum. Based on our simulations of
%the benchmark test case,  for
%various values of $N$, we found that $M \leq 3N/4$ and hence in
%this work, for Example~2 with $N = 100$,  we chose $M = 3N/4$
%and used \eqref{eq:p-p} to simulate the dissipative range for
%Example~2,

For a fixed time $t$, the $L$-th mode energy spectrum of a 
tangential divergence-free flow $\vu$ on the sphere  is defined by
\begin{equation}\label{energy}
E(L) = E(\vu,L) =   \sum_{|m| \le L} |\beta_{L,m}(t)|^2, \qquad  \qquad 
\vu(\xh,t) := \sum_{L=1}^\infty \sum_{|m| \le L} \beta_{L,m}(t) \Z_{L,m}(\xh).
\end{equation}
Although the analytical form of the flow in Example~2 is not known, 
several investigations have been carried out for such fields with 
initial spectrum of the $L$-th mode decaying with order $L^{-1}$ or $L^{-2}$. 
In particular, it is well known (see \cite{debussche_dubois_temam}),
for this benchmark test case (on periodic two dimensional geometries), 
that the energy spectrum of the velocity has a power-law inertial range 
and an exponential decay (dissipation range) for wave numbers larger than
the Kraichnan's dissipation wave number. Further, several random smaller 
structures built in the initial random vorticity evolve into regular flow 
with larger structures. 

With $\vu$ being the unique solution of the NSE (\ref{NSE_sphere}),
let us decompose $\vu = \tilde{\vu}_{N_1} + \vw_{N_1}$, where 
$\tilde{\vu}_{N_1}=\proj_{N_1}(\vu)$ contains all modes lower or equal $N_1$
and $\vw_{N_1} := \vu - \proj_{N_1}(\vu)$ contains all higher modes.
The existence of a relation between $\vw_{N_1}$ and $\tilde{\vu}_{N_1}$
of a form $\vw_{N_1} = \Phi(\tilde{\vu}_{N_1})$ was established
in ~\cite{temam_wang}. The graph of $\Phi$ is known as the
inertial manifold of \eqref{NSE_sphere}. For computational purposes,
the higher modes can be computed efficiently using an 
approximate inertial manifold: 
\begin{equation}\label{eq:p-p}
\tilde{\Phi}_{\widetilde {N_1}}( \tilde{\vu}_{N_1}) = 
(\nu \A +\CC)^{-1} 
\left(\proj_{\widetilde {N_1}}  - \proj_{N_1}\right) 
\left[\vf - 
\B\left( \tilde{\vu}_{N_1}, \tilde{\vu}_{N_1}\right)\right], 
~~ \widetilde{N_1} > N_1.
\end{equation}
This well known approximation (without the Coriolis term)
was  introduced in~\cite{foias_jolly_titi,foias_manley_temam} for
nonlinear dissipative systems, including the NSE on domains and
we choose $\widetilde{N_1} = 2N_1$.

For the benchmark test case, in the dissipation range, even
$L^4 E(\tilde{\Phi}_{2N_1}(\tilde{\vu}_{N_1}),L)$ decays exponentially 
to zero, further justifying restriction of the infinite dimensional 
range after certain values of $N_1$. As  
discussed in~\cite[page~280, 307]{debussche_dubois_temam}
(and repeated in~\cite[page~991]{fengler_freeden}), 
such a faster decay (one order higher than the $L^{-3}$ decay known is
Kraichnan theory of turbulence) is expected due to the Reynolds number
considered in~\cite{debussche_dubois_temam, fengler_freeden} being much 
small than $25,000$. (Extensive study in~\cite{Brachet, Orszag} shows that the
turbulence theory  decay can occur only when the Reynolds number is of the
order $25,000$.)

Briefly (without repeating technical details
in~\cite{debussche_dubois_temam}), using  the viscosity term $\nu$ in
(\ref{NSE_sphere}), the Reynolds number is $\mathcal{O}(\nu^{-1})$ 
with the order constant $r$ given by the product of  
the mean fluid velocity and the characteristic length-scale. With 
$\nu^{-1} = 10,000$ and the constant rotation rate $\Omega = 1$, 
for the Coriolis parameter in (\ref{C}), the  decay  of energy spectrum of
a numerical velocity and exponential decay of  its approximate complement  in 
Figure~\ref{energy_spectrum}-\ref{scaled_energy_spectrum} 
is well supported by  extensive
simulations in~\cite{Brachet, Orszag, debussche_dubois_temam}, 
highlighting further the benchmark applicability of our algorithm, 
extending periodic domain 
results in~\cite{Brachet, Orszag, debussche_dubois_temam}
to a practically relevant rotating sphere case with Coriolis effect. 

Further, the simulated
results are substantiated by the well known exponential decay of 
$E(\tilde{\Phi}_{2N_1}(\tilde{\vu}_{N_1}),L)$, observed in 
Figures~\ref{energy_spectrum}-\ref{scaled_energy_spectrum}.
Finally, the exponential decay of the energy spectrum 
show that $N = 100, N_1 = 3N/4$ is sufficient to understand 
the flow behavior for this benchmark example with our algorithm
using adaptive variable order and  variable time-step
highly stable  backward differentiation formulas with a
practically useful relative
error tolerance  $\mathcal{O}(10^{-3})$.

%\clearpage
%\newpage

\begin{center}
\begin{figure}[h]
\includegraphics[width=12cm,height=7cm]{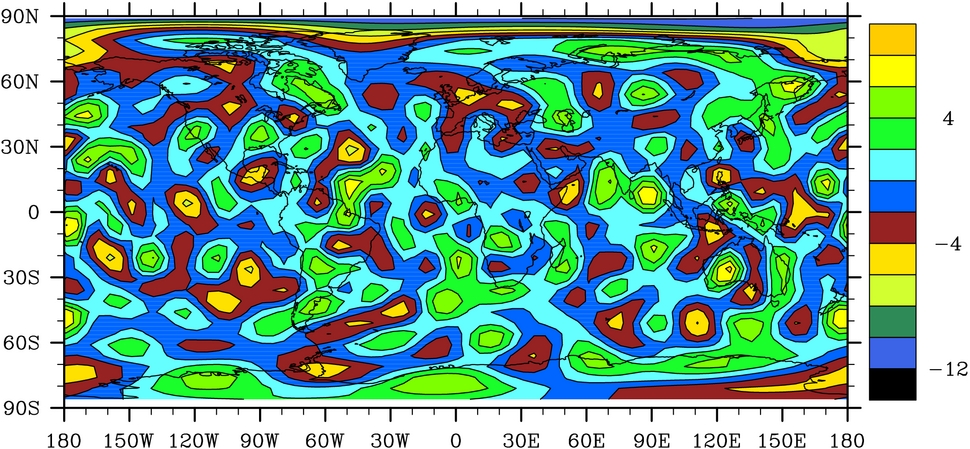}
\vspace{-0.1in}
\caption{Initial random vorticity stream function $\Psi$ of $\Vort \vu$ at $t = 0$.}\label{vort_t0}
%\end{figure}
%\end{center}

\vspace{0.4in}
%\begin{center}
%\begin{figure}[h]
\begin{tabular}{cc}
\includegraphics[width=7.5cm,height=7.5cm]{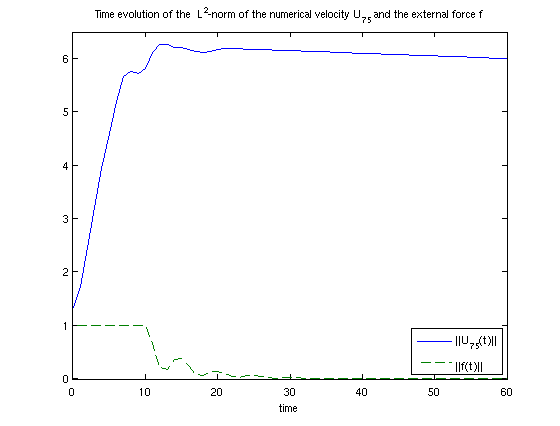}
& \hspace{-0.5in}
\includegraphics[width=7.5cm,height=7.5cm]{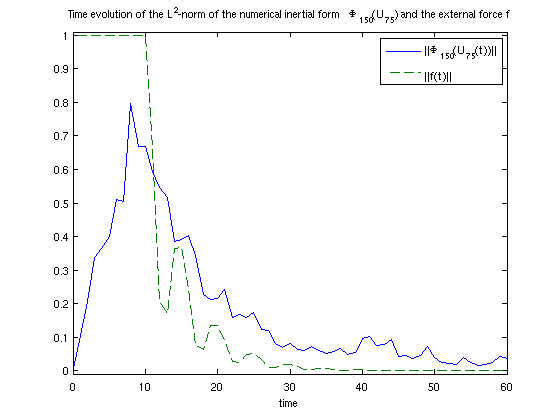}\\
Time evolution of  $\vU_{75}(t)$ and $f(t)$.  &  \hspace{-0.5in} Time evolution of 
$\tilde{\Phi}_{150}(\vU_{75}(t))$ and $\vf(t)$.
\end{tabular}
\caption{Impact of the  external force on a
numerical velocity and its complement.}\label{velocity_force_fig}
\end{figure}
\end{center}

%\vspace{0.5in}
%\includegraphics[width=15cm,height=10cm]{time_evoluation_soln_and_force_N_100}
%\caption{Impact of the  external force on the time evolution of the
%velocity flow.} \label{velocity_force_fig}
%\end{figure}
%\end{center}

\begin{center}
\begin{figure}[h]
\includegraphics[width=15cm,height=8cm]{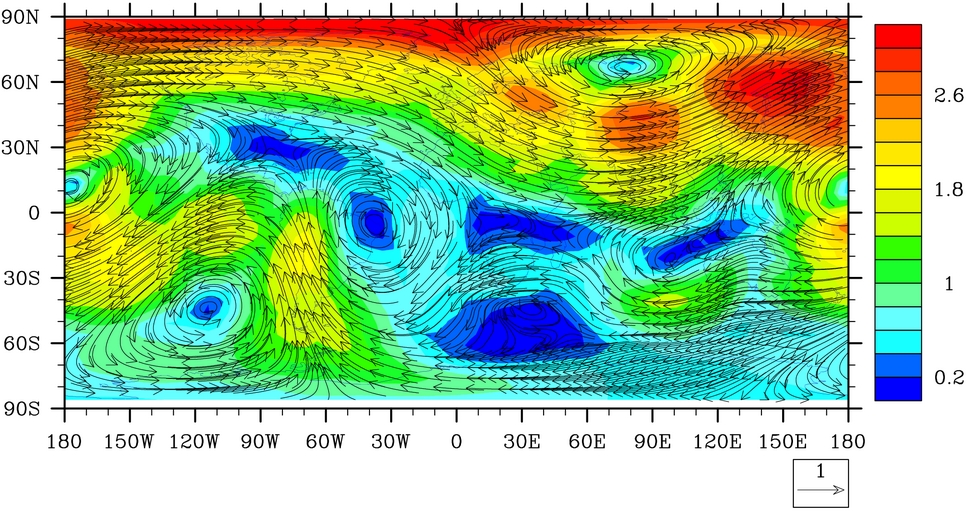}
\vspace{-0.6in}
\caption{Numerical velocity  $\vU_{75}(t)$,  at $t = 10$.}\label{panel_t10}
\includegraphics[width=15cm,height=8cm]{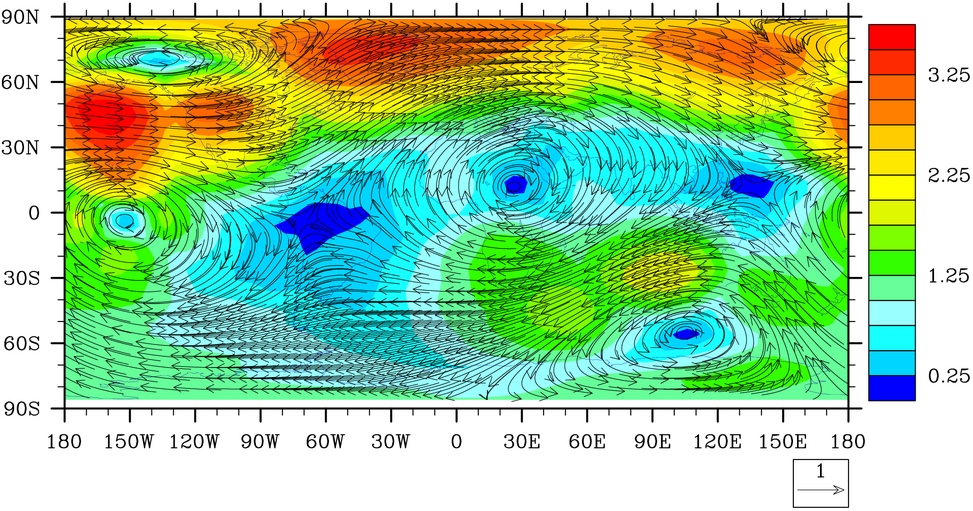}
\vspace{-0.6in}
\caption{Numerical velocity  $\vU_{75}(t)$,  at $t = 20$.}\label{panel_t20}
\includegraphics[width=15cm,height=8cm]{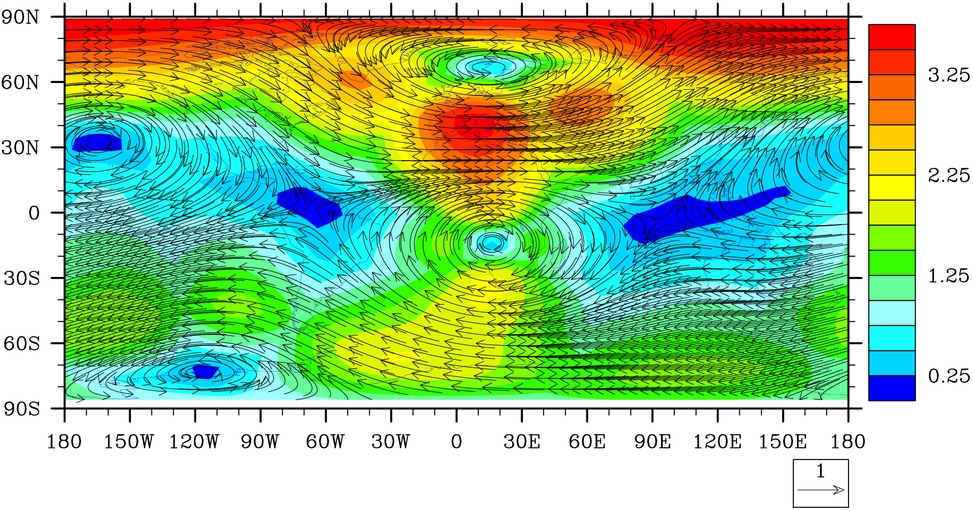}
\vspace{-0.6in}
\caption{Numerical velocity  $\vU_{75}(t)$,  at $t = 30$.}\label{panel_t30}
\end{figure}
\end{center}
\begin{center}
\begin{figure}[h]
\includegraphics[width=15cm,height=8cm]{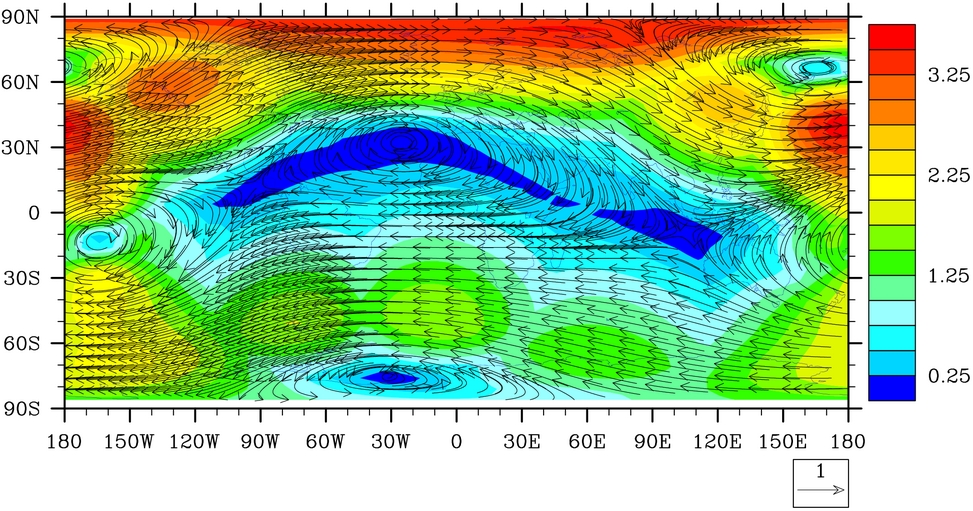}
\vspace{-0.6in}
\caption{Numerical velocity  $\vU_{75}(t)$,  at $t = 40$.}\label{panel_t40}
\includegraphics[width=15cm,height=8cm]{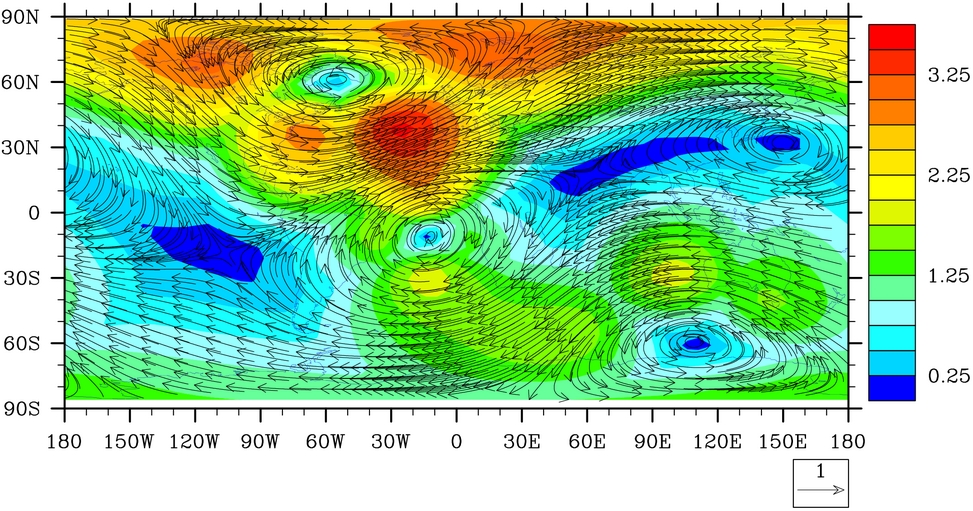}
\vspace{-0.6in}
\caption{Numerical velocity  $\vU_{75}(t)$,  at $t = 50$.}\label{panel_t50}
\includegraphics[width=15cm,height=8cm]{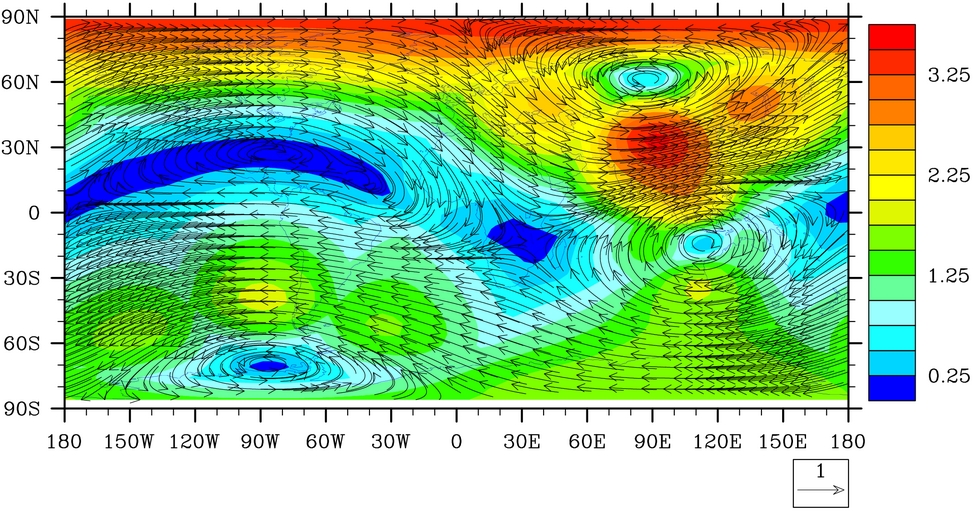}
\vspace{-0.6in}
\caption{Numerical velocity  $\vU_{75}(t)$,  at $t = 60$.}\label{panel_t60}
\end{figure}
\end{center}

\begin{center}
\begin{figure}[h]
\includegraphics[width=15cm,height=7cm]{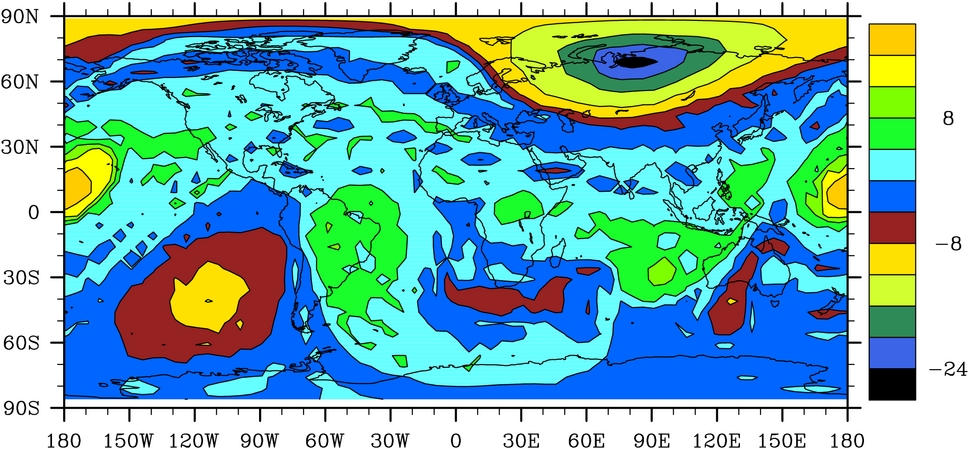}
\vspace{-0.3in}
\caption{Vorticity stream function $\Psi_{75}$ of $\Vort \vU_{75}$ at $t = 10$.}\label{vort_t10}
\includegraphics[width=15cm,height=7cm]{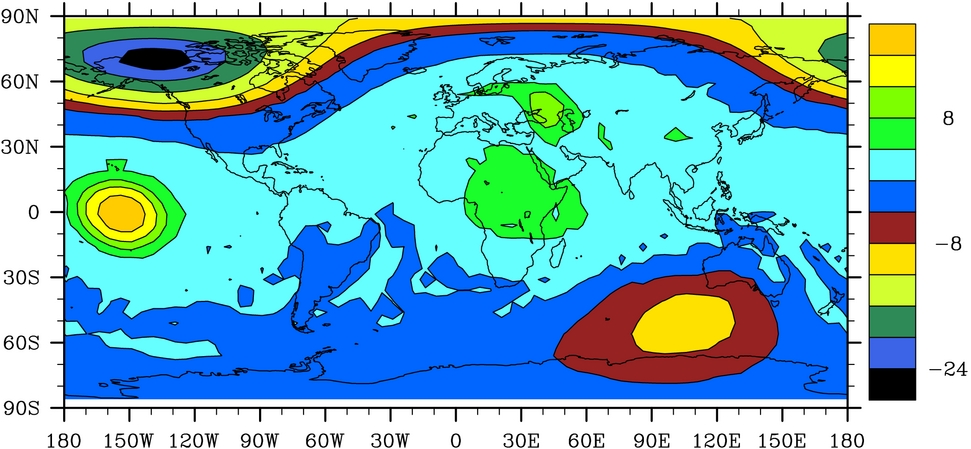}
\vspace{-0.3in}
\caption{Vorticity stream function $\Psi_{75}$ of $\Vort \vU_{75}$ at $t = 20$.}\label{vort_t20}
\includegraphics[width=15cm,height=7cm]{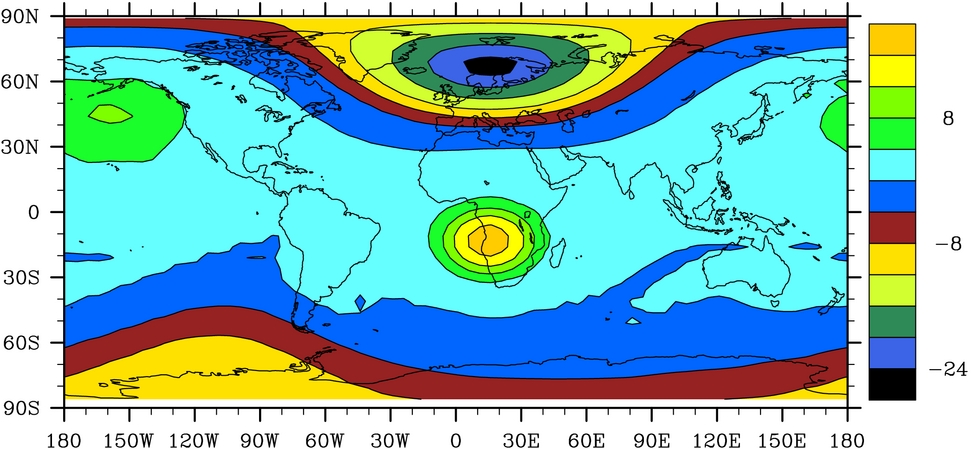}
\vspace{-0.3in}
\caption{Vorticity stream function $\Psi_{75}$ of $\Vort \vU_{75}$ at $t = 30$.}\label{vort_t30}
\end{figure}
\end{center}
\begin{center}
\begin{figure}[h]
\includegraphics[width=15cm,height=7cm]{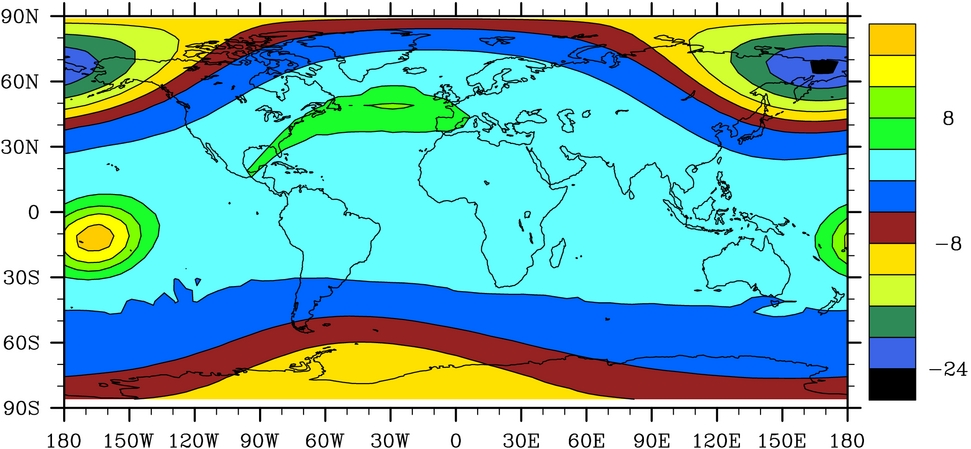}
\vspace{-0.3in}
\caption{Vorticity stream function $\Psi_{75}$ of $\Vort \vU_{75}$ at $t = 40$.}\label{vort_t40}
\includegraphics[width=15cm,height=7cm]{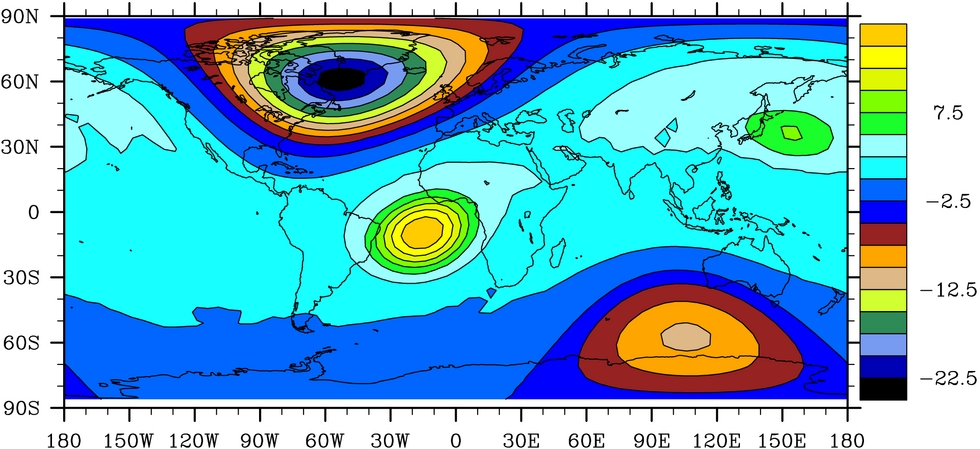}
\vspace{-0.3in}
\caption{Vorticity stream function $\Psi_{75}$ of $\Vort \vU_{75}$
  at $t = 50$.}\label{vort_t50}
\includegraphics[width=15cm,height=7cm]{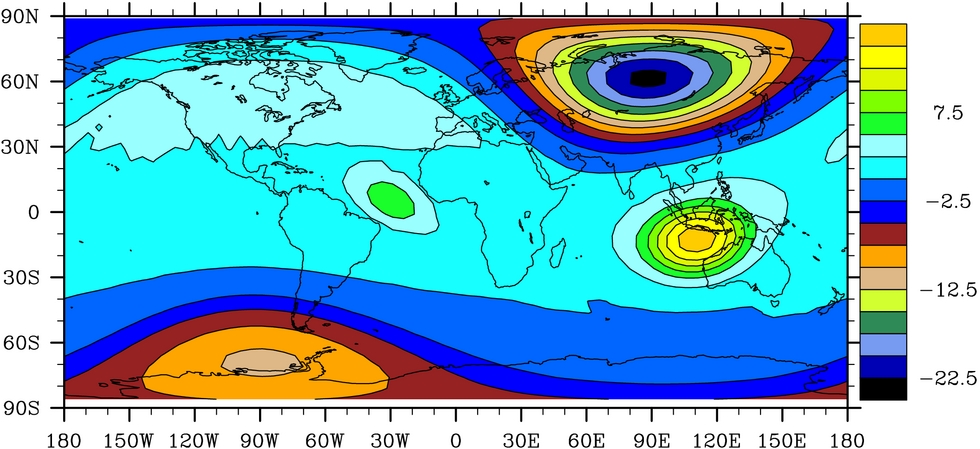}
\vspace{-0.3in}
\caption{Vorticity  stream function $\Psi_{75}$ of $\Vort \vU_{75}$ at $t = 60$.}\label{vort_t60}
\end{figure}
\end{center}

\begin{center}
\begin{figure}[h]
\begin{tabular}{cc}
\includegraphics[width=7.5cm,height=7.5cm]{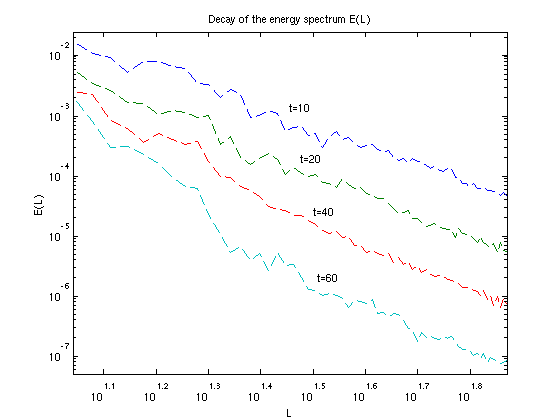}
&
\includegraphics[width=7.5cm,height=7.5cm]{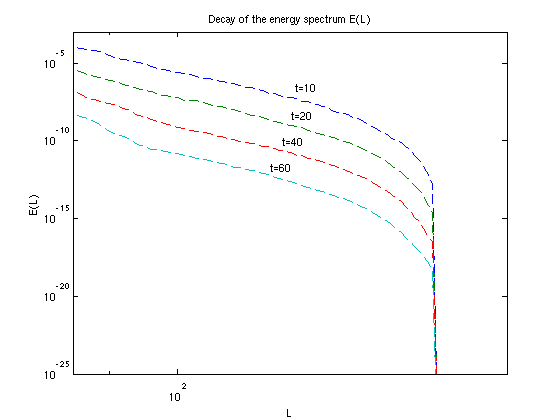}\\
Energy spectrum $E(L)$ of  $\vU_{75}(t)$.  &  Energy spectrum $E(L)$ of 
$\tilde{\Phi}_{150}(\vU_{75}(t))$.
\end{tabular}
\caption{Energy spectra of velocity $\vU_{75}(t)$ 
and $\Phi_{150}(\vU_{75}(t))$.}\label{energy_spectrum}
\end{figure}

%\begin{center}
\begin{figure}[h]
\begin{tabular}{cc}
\includegraphics[width=7.5cm,height=7.5cm]{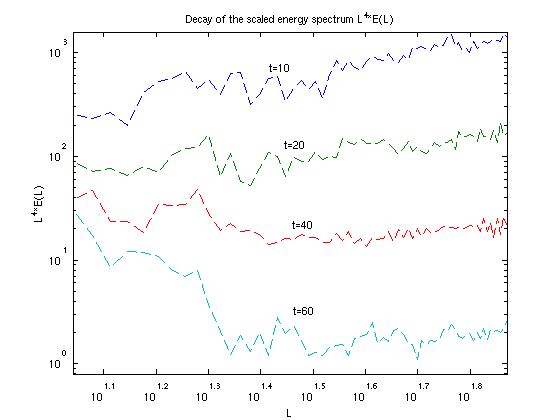}
&
\includegraphics[width=7.5cm,height=7.5cm]{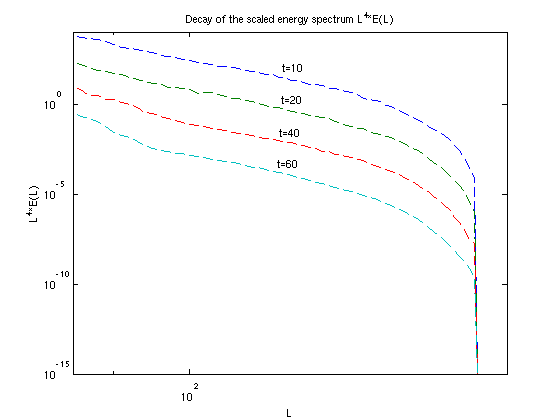}\\
Energy spectrum $L^4*E(L)$ of  $\vU_{75}(t)$ & \hspace{-0.3in}  Energy spectrum
$L^4*E(L)$ of  $\tilde{\Phi}_{150}(\vU_{75}(t))$.
\end{tabular}
\caption{Scaled energy spectra  $L^4*E(L)$  of $\vU_{75}(t)$ 
and $\tilde{\Phi}_{150}(\vU_{75}(t))$.}\label{scaled_energy_spectrum}
\end{figure}
\end{center}

\clearpage
\newpage
%%%%%%%%%%%%%%%%%%%%%%%%%%%%%%%%%%%%%%%%%%%%%%%%%%%%%%%%%%%%%%%%%%%%%%%%%
\section{Appendix}

In this section, we generalize certain known domain case estimates, that are
fundamental for the NSE analysis, to the  spherical surface case and hence
prove \eqref{Adelta}.
Following~\cite[Page~574]{ilin91}, for  $\vu \in C^\infty(TS)$, 
we extend $\vu$ to the spherical layer
$S \times I$, $I = (r_1,r_2), 0<r_1 < 1 < r_2 < \infty$
by the formula
\be\label{ext_map}
 \tilde{\vu}(\xh) = \varphi(|\xh|)\vu(\xh / |\xh|),
 \ee
 where $\varphi(t) \in C_0^\infty(I)$,   $\varphi(t) \ge 0,~t \in I$, 
and  $\varphi(1) = 1$.
We have
\[
   \int_{S\times I} |\tvu|^p =
        \int_{r_1}^{r_2} \varphi^p(r) \int_{S} |\vu|^p dS
\]
In other words
\be\label{Lpnorm}
  \|\tvu\|_{L^p(S\times I)} = c \|\vu\|_{L^p(TS)},
\ee
where $c=c(\varphi,r_1,r_2)$.
Suppose $\vu,\vv,\vw$ are extended from $S$ to the spherical
layer $S \times I$ by \eqref{ext_map}. Then from
\cite[Lemma 4.3]{ilin91} we have
\be\label{btilde}
    b(\vu,\vv,\vw) = c b(\tilde{\vu},\tilde{\vv},\tilde{\vw}),
\ee

On the sphere $S$, we have the following version of Sobolev embedding 
inequality \cite{aubin}
\be\label{Sobolev_imbedding}
  \| \vu \|_{L^q(TS)} \le C \|\vu\|_{H^{s}(TS)},
       \qquad s<1, \quad \frac{1}{q}=\frac{1}{2} - \frac{s}{2}.
\ee
The following nonlinearity estimate 
is an adaptation of \cite[Proposition 6.1]{constantin_foias}
for $S$. 
\begin{proposition}~\label{foias_prop_S}
Let $s_1,s_2,s_3 \ge 0$ be real numbers, and we assume that
$s_1+s_2+s_3 \ge 1$ and $(s_1,s_2,s_3)\ne (0,0,1),(0,1,0),(1,0,0)$.
Then there exists a constant depending on $s_1,s_2,s_3$ such that
\[
   |b(\vu,\vv,\vw)| \le C \|\vu\|_{H^{s_1}(TS)} \|\vv\|_{H^{s_2+1}(TS)}
                        \| \vw\|_{H^{s_3}(TS)}
\]
or in the extrapolated form, writing $H^{s}(TS) =  H^{s}_{TS}$,   for all $\vu,\vv,\vw \in C^\infty(TS)$
\beas
   |b(\vu,\vv,\vw)| 
   \le C & \|\vu\|^{1+[s_1]-s_1}_{H^{[s_1]}_{TS}} 
           \|\vu\|^{s_1-[s_1]}_{H^{[s_1]+1}_{TS}} 
%   \\  & 
\|\vv\|^{1+[s_2]-s_2}_{H^{[s_2]+1}_{TS}} 
      \|\vv\|^{s_2-[s_2]}_{H^{[s_2]+2}_{TS}} 
%\\    &
\|\vw\|^{1+[s_3]-s_3}_{H^{[s_3]}_{TS}} 
     \|\vw\|^{s_3-[s_3]}_{H^{[s_3]+1}_{TS}}.
\eeas
\end{proposition}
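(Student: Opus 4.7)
The plan is to reduce the assertion on the sphere to the known Euclidean estimate of Constantin--Foias by means of the extension map \eqref{ext_map} and the identity \eqref{btilde}. First I would fix $\vu,\vv,\vw\in C^\infty(TS)$ and extend them to the spherical shell $S\times I$ as $\tilde{\vu},\tilde{\vv},\tilde{\vw}$. A brief but essential preliminary is to show that for every $s\ge 0$,
\begin{equation*}
c_1\|\vu\|_{H^s(TS)}\;\le\;\|\tilde{\vu}\|_{H^s(S\times I)}\;\le\;c_2\|\vu\|_{H^s(TS)},
\end{equation*}
with constants depending on $\varphi,r_1,r_2$ but not on $\vu$. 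For integer $s$ this is immediate from differentiating the product $\varphi(|\xh|)\vu(\xh/|\xh|)$ and using \eqref{Lpnorm} together with the intrinsic characterization of $H^k(TS)$; the fractional case then follows by real interpolation (or by the spectral characterization based on the Stokes operator) applied simultaneously on both sides.

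Once this norm equivalence is in hand, I would apply the domain version of the estimate (Proposition~6.1 of Constantin--Foias, stated for smooth bounded three-dimensional domains; here the domain is $S\times I\subset \R^3$) to $\tilde{\vu},\tilde{\vv},\tilde{\vw}$. Under the hypotheses $s_1+s_2+s_3\ge 1$ and $(s_1,s_2,s_3)\notin\{(1,0,0),(0,1,0),(0,0,1)\}$, this yields
\begin{equation*}
|b(\tilde{\vu},\tilde{\vv},\tilde{\vw})|\;\le\;C\,\|\tilde{\vu}\|_{H^{s_1}(S\times I)}\|\tilde{\vv}\|_{H^{s_2+1}(S\times I)}\|\tilde{\vw}\|_{H^{s_3}(S\times I)}.
\end{equation*}
Inserting the norm equivalence and invoking \eqref{btilde} to replace $b(\tilde{\vu},\tilde{\vv},\tilde{\vw})$ by a constant multiple of $b(\vu,\vv,\vw)$, one obtains the first stated bound.

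For the extrapolated form, I would apply the standard logarithmic/interpolation inequality between integer Sobolev spaces on $S$, namely
\begin{equation*}
\|\vu\|_{H^{s_1}(TS)}\;\le\;\|\vu\|_{H^{[s_1]}(TS)}^{1+[s_1]-s_1}\,\|\vu\|_{H^{[s_1]+1}(TS)}^{s_1-[s_1]},
\end{equation*}
and similarly for $\vv$ (with the shift by one) and $\vw$; this interpolation is again a consequence of the spectral characterization via $\A^{s/2}$ and Hölder's inequality on the Fourier side. A density argument in $C^\infty(TS)$ extends the bound to all admissible fields.

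The principal difficulty is the fractional norm equivalence between $\vu$ on $S$ and its extension $\tilde{\vu}$ on $S\times I$; establishing this cleanly for non-integer $s$ is the step requiring the most care, since the extension is radial-multiplicative and one must verify that the product structure behaves well under interpolation/spectral characterization. The remaining pieces---the identity \eqref{btilde} and the Euclidean trilinear estimate---are cited directly, and the interpolation step for the extrapolated form is routine once the base inequality is known.
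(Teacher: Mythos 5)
Your reduction to the shell $S\times I$ is the same starting point as the paper's, but the way you use it contains a genuine gap: a dimensional mismatch. The shell is a three-dimensional Euclidean domain, and the Constantin--Foias trilinear estimate on an $n$-dimensional domain requires $s_1+s_2+s_3\ge n/2$ (with the exceptional triples sitting at $s_i=n/2$). Applied on $S\times I$ this gives the hypothesis $s_1+s_2+s_3\ge 3/2$, not the $s_1+s_2+s_3\ge 1$ claimed in the proposition, which is the two-dimensional threshold. The range $1\le s_1+s_2+s_3<3/2$ is exactly where the proposition is needed downstream: in Lemma~\ref{lem:weaklip} it is invoked with $s_1=0$, $s_2=2\delta-1$, $s_3=2-2\delta$, so $s_1+s_2+s_3=1$ precisely. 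Your argument cannot reach this case. The paper avoids the problem by using the extension only for the H\"older step: it bounds $|b(\tvu,\tvv,\tvw)|$ by a product of $L^{q_i}(S\times I)$ norms with exponents chosen according to the \emph{two-dimensional} Sobolev scaling $1/q_i=1/2-s_i/2$, converts those $L^{q_i}$ norms back to $L^{q_i}(TS)$ using only the elementary identity \eqref{Lpnorm} (which holds for every $p$ because the extension is a radial product), and only then applies the Sobolev embedding \eqref{Sobolev_imbedding} intrinsically on the two-dimensional sphere. No fractional Sobolev norm comparison between $S$ and $S\times I$ is ever needed, and the $n=2$ scaling is preserved.

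A secondary remark: the fractional $H^s$ norm equivalence you identify as the delicate step is indeed more than is required even for your own plan --- only the boundedness of the extension operator $H^s(TS)\to H^s(S\times I)$ would be used, and that direction does interpolate cleanly from the integer case --- but fixing this would not repair the dimensional loss above. To salvage your route you would have to exploit the product structure of the shell to recover two-dimensional Sobolev embeddings in the angular variables, which is in effect what the paper's proof does by returning to the sphere before embedding.
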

\begin{proof}
Let $\vu,\vv,\vw \in C^\infty(TS)$ and $\tilde{\vu},\tilde{\vv},\tilde{\vw}$
be their corresponding extension to the spherical layer
$\widetilde{\Omega}:=S\times I$. Let us consider first the case $s_i <1$ for $i=1,2,3$.
Define associated constants  $q_1, q_2, q_2, q_4$ so that  $\sum_{i=1}^4 \frac{1}{q_i} = 1$ and
$\frac{1}{q_i} = \frac{1}{2} - \frac{s_i}{2}$ for $i=1,2,3$.
Then, by H\"older's inequality
\beas
|b(\tvu,\tvv,\tvw)| &=&
\left |\sum_{i,j = 1}^{3}\int_{\widetilde\Omega}
    \tilde{u}_j \frac{\partial \tilde{v}_i}{\partial x_j} \tilde{w}_i 1
     \right| 
    \le \|\tvu\|_{L^{q_1}(\Omega)} \| \nabla \tvv \|_{L^{q_2}(\Omega)}
          \|\tvw\|_{L^{q_3}(\Omega)} \|1\|_{L^{q_4}(\Omega)} \\
\eeas
Restricting to the sphere by \eqref{btilde} and \eqref{Lpnorm} then
using the Sobolev embedding theorem on the sphere \eqref{Sobolev_imbedding}
we have
\beas
   |b(\vu,\vv,\vw)| &\le& C
  \|\vu\|_{L^{q_1}_{TS}} \|\Grad\vv\|_{L^{q_2}_{TS}} \|\vw\|_{L^{q_3}_{TS}} 
    \le
  C \|\vu\|_{H^{s_1}_{TS}} \|\Grad\vv\|_{H^{s_2}_{TS}} \|\vw\|_{H^{s_3}_{TS}}.
\eeas
\end{proof}
\begin{lemma}~\label{lem:weaklip}
Let $\delta \in (1/2,1)$ be given and $\vu, \vv \in V$. Then there
exists $C$, independent of $\vu$ and $\vv$, such that 
\[
  \|\A^{-\delta} \B(\vu,\vv)\| \le C
    \left\{
   \begin{array}{cc}
        \|\A^{1-\delta} \vu \| \|\vv\| & \\
        \|\vu\| \|\A^{1-\delta} \vv\| & \\
   \end{array}, 
   \right. \qquad \vu, \vv \in V.
\]
\end{lemma}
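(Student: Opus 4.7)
The plan is to combine a duality argument with the trilinear estimate in Proposition~\ref{foias_prop_S}, using the skew-symmetry \eqref{skew} to shift the ``bad'' factor onto a test vector. Recall that on $\calD(\A^{s/2})$ one has $\|\vz\|_{H^s(TS)} = \|\A^{s/2}\vz\|$, and that $\A^{-\delta}$ is self-adjoint on $H$. Hence, using \eqref{B},
\[
  \|\A^{-\delta}\B(\vu,\vv)\| = \sup_{\vw\in H,\,\|\vw\|=1} \bigl(\B(\vu,\vv),\A^{-\delta}\vw\bigr)
 = \sup_{\vw} b\bigl(\vu,\vv,\A^{-\delta}\vw\bigr),
\]
so it suffices to bound $|b(\vu,\vv,\A^{-\delta}\vw)|$ uniformly in $\vw$ with $\|\vw\|=1$.

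For the second of the two desired estimates, I would apply the skew-symmetry \eqref{skew} to write $b(\vu,\vv,\A^{-\delta}\vw) = -\,b(\vu,\A^{-\delta}\vw,\vv)$, and then invoke Proposition~\ref{foias_prop_S} with the choice $s_1 = 0$, $s_2 = 2\delta-1$, $s_3 = 2(1-\delta)$. Since $\delta\in(1/2,1)$ we have $s_2,s_3\in(0,1)$, so $s_1+s_2+s_3 = 1$ and the triple avoids the excluded cases $(1,0,0),(0,1,0),(0,0,1)$. Converting $H^s(TS)$ norms to $\A$-norms gives
\[
 |b(\vu,\A^{-\delta}\vw,\vv)| \le C\,\|\vu\|\,\|\A^{\delta}\A^{-\delta}\vw\|\,\|\A^{1-\delta}\vv\|
  = C\,\|\vu\|\,\|\vw\|\,\|\A^{1-\delta}\vv\|,
\]
which is the second bound. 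For the first bound I would apply Proposition~\ref{foias_prop_S} directly (no skew-symmetry needed for the middle slot) with $s_1 = 2(1-\delta)$, $s_2 = 2\delta-1$, $s_3 = 0$; again $s_1+s_2+s_3 = 1$ and the triple is admissible. This yields $|b(\vu,\A^{-\delta}\vw,\vv)|\le C\,\|\A^{1-\delta}\vu\|\,\|\vw\|\,\|\vv\|$, and taking the supremum over $\vw$ finishes the proof.

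The conceptually delicate point is the choice of exponents: one needs $s_1+s_2+s_3\ge 1$ for Proposition~\ref{foias_prop_S} to apply while simultaneously matching the prescribed factors $\|\A^{1-\delta}\cdot\|$, $\|\cdot\|$, $\|\vw\|$. This is exactly where the hypothesis $\delta\in(1/2,1)$ is used: it ensures both $2(1-\delta)$ and $2\delta-1$ lie in $(0,1)$, so that the non-degeneracy conditions in Proposition~\ref{foias_prop_S} are satisfied and the Sobolev exponents on $\A^{-\delta}\vw$ are nonnegative. A minor technical point is to verify that the supremum may be taken over $\vw\in H$ (divergence-free tangential), which is fine because $\A^{-\delta}\B(\vu,\vv)\in H$, so components orthogonal to $H$ contribute nothing.
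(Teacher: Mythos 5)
Your proof is correct and follows essentially the same route as the paper's: skew-symmetry \eqref{skew} to move $\A^{-\delta}\vw$ into the middle slot of $b$, Proposition~\ref{foias_prop_S} with the exponent triples $(0,\,2\delta-1,\,2-2\delta)$ and $(2-2\delta,\,2\delta-1,\,0)$, the identity $\|\A^{-\delta}\vw\|_{H^{2\delta}(TS)}=\|\vw\|$, and duality over $\vw$. The only blemish is your parenthetical claim that no skew-symmetry is needed for the first bound: the inequality you then display bounds $b(\vu,\A^{-\delta}\vw,\vv)$, which is exactly the expression produced by applying \eqref{skew}, so both bounds use it just as the paper does.
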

\begin{proof}
Let $\vu,\vv,\vw \in V$.  Hence from \eqref{skew}, and
by using Proposition \ref{foias_prop_S} with $s_1=0$,
$s_2=2\delta-1>0$ and $s_3=2-2\delta>0$, 
\[
   |b(\vu,\vv,\A^{-\delta} \vw)| = |b(\vu,\A^{-\delta}\vw,\vv)|
     \le C \|\vu\| \|\A^{-\delta}\vw\|_{H^{2\delta}(TS)}
         \|\vv\|_{H^{2-2\delta}(TS)}.
\]
Since
$\| \A^{-\delta} \vw\|_{H^{2\delta}(TS)} =
  \| \A^{\delta} \A^{-\delta} \vw\| = \|\vw\|$, we get 
\[
   |b(\vu,\vv,\A^{-\delta} \vw)| \le
   C \|\vu\| \|\vw\| \|\vv\|_{H^{2-2\delta}(TS)}, \qquad  \vw \in V.
\]
Since the inequality is true for all $\vw \in V$, we obtain 
the first bound  
\[
 \|\A^{-\delta} \B(\vu,\vv) \|
    \le C \|\vu\| \|\vv\|_{H^{2-2\delta}(TS)} =
       C \|\vu\| \|\A^{1-\delta}\vv\|.
\]
To obtain the second bound, 
we again use \eqref{skew} and Proposition \ref{foias_prop_S}
but with  $s_1=2-2\delta$, $s_2 = 2\delta-1$ and $s_3=0$, 
\beas
 |b(\vu,\vv,\A^{-\delta}\vw)| =
   |b(\vu,\A^{-\delta}\vw,\vv)|
     &\le& C \| \vu \|_{H^{2-2\delta}(TS)}
             \|\A^{-\delta} \vw\|_{H^{2\delta}(TS)} \|\vv\| \\
     &=& C\|\A^{1-\delta} \vu\| \|\vw\| \|\vv\|,  \qquad  \vw \in V.
\eeas
Since the inequality is true for all $\vw \in H$, we obtain
\[
\|\A^{-\delta} \B(\vu,\vv) \| \le C\|\A^{1-\delta}\vu\| \|\vv\|.
\]
\end{proof}
%%%%%%%%%%%%%%%%%%%%%%%%%%%%%%%%%%%%%%%%%%%%%%%%%%%%%%%%%%%%%%%%%%%%%%%
%\begin{lemma}
%Let $k\ge 0$ be an integer and $\vu \in H^{2k}\cap V$. Then
%\[
%  b(\vu,\vu,\A^k \vu) = 0.
%\]
%\end{lemma}
%\begin{proof}
%The cases $k=0,1$ can be found in \cite[Lemma 2.1, Lemma 3.1]{ilin91}. 
%For $k\ge 2$, we continue the strategy used in \cite{ilin91} by 
%defining
%\[
%  J(a,b) = \xh \times \Grad a \cdot \Grad b = 
%  - \Curln (\xh a \times (\xh \times \Grad b)),  
%\]
%for smooth functions $a,b \in C^\infty(TS)$. By Stokes' theorem 
%we deduce that
%\[
%  \int_S J(a,b)  = 0
%\]
%We set $u = -\Curl \psi$, where $\psi$ is the stream function.  
%Then, since $\A = \Curl \Curln$, we have
%\beas
% b(\vu,\vu,\A^k\vu) &=& 
% \int_S \Curln \vu \times u \cdot  (\Curl \Curln)^k\vu dS \\
% \int_S \Curln \vu \times \vu \cdot  \Curl \Curln (\Curl \Curln)^{k-1}\vu dS \\
%&=& \int_S \Curln(\Curln \vu\times \vu)\cdot \Curln (\Curl \Curln)^{k-1}\vu \\
%&=& \int_S J(\psi,\nabla\psi) (\nabla\psi)^k \psi
%\eeas
%\end{proof}
%%%%%%%%%%%%%%%%%%%%%%%%%%%%%%%%%%%%%%%%%%%%%%%%%%%%%%%%%%%%%%%%%%%%%%%
%\begin{lemma}\label{lem:uniform}
%Let $s\ge 0$, $f \in C([0,T];V)$, and 
%$\vu(0) = \vu_0 \in \calD(\A^{s+1/2})$. Then
%\[
% \|\A^{s+1/2} \vu(t) \| \le M, \quad \mbox{ for } 0 < t \le T,
%\]
%where $M$ depends on $\vu_0$ but not on $t$.
%\end{lemma}
%\begin{proof}
%
%\end{proof}
%%%%%%%%%%%%%%%%%%%%%%%%%%%%%%%%%%%%%%%%%%%%%%%%%%%%%%%%%%%%%%%%%%%%%%%%%
%\clearpage
\paragraph{Acknowledgments:} The support of the Australian Research Council
under its Discovery and Centre of Excellence programs is gratefully
acknowledged. The authors thank Professors M. Farge and  E. S. Titi~\cite{Titi2009} for valuable 
discussions.
% according to Mathematics of Computation bib style

\end{document}